\newcommand{\GL}{\mathrm{GL}}
\newcommand{\SO}{\mathrm{SO}}
\newcommand{\GSp}{\mathrm{GSp}}
\newcommand{\GSpin}{\mathrm{GSpin}}
\newcommand{\oo}{\mathcal{O}}
\newcommand{\p}{\mathfrak{p}}
\newcommand\Z{\mathbb{Z}}
\newcommand\A{\mathbb{A}}
\newcommand\Q{\mathbb{Q}}
\newcommand\R{\mathbb{R}}
\newcommand\C{\mathbb{C}}
\newcommand\K {\mathbb{K}}
\def\PP{{\mathbb P}}
\newcommand\et{\textup{\'et}}
\DeclareMathOperator{\Aut}{Aut}
\newcommand\Ker{\textup{Ker}}
\newcommand\Gal{\mathrm{Gal}}
\newcommand\Shaf{\mathrm{Shaf}}
\newcommand\Sh{\mathrm{Sh}}
\newcommand\Pic{\mathop{\mathrm{Pic}}\nolimits}
\def\Spec{\mathop{\mathrm{Spec}}\nolimits}
\newcommand\Frac{\mathrm{Frac}}
\newcommand\tr{\mathrm{tr}}
\DeclareMathOperator{\Bir}{Bir}
\DeclareMathOperator{\ord}{ord}
\def\polL{\mathscr{L}}
\def\polM{\mathscr{M}}
\newcommand\id{\mathrm{id}}
\theoremstyle{plain}
\theoremstyle{definition}
\newtheorem*{lemma*}{Lemma}
\newtheorem*{prop*}{Proposition}
\newtheorem*{theorem*}{Theorem}
\newtheorem*{claim*}{Claim}
\newtheorem{definition*}{Definition}
\theoremstyle{plain}
\newtheorem{theoremsub}[subsection]{Theorem}
\newtheorem{propsub}[subsection]{Proposition}
\newtheorem{lemmasub}[subsection]{Lemma}
\theoremstyle{definition}
\newtheorem{definitionsub}[subsection]{Definition}
\newtheorem{remarksub}[subsection]{Remark}
\newtheorem{defn-propsub}[subsection]{Definition-Proposition}
  \def\MR#1{}
\begin{document}
\title[Shafarevich conjecture]{On the Shafarevich conjecture for irreducible symplectic varieties}
\author[T.\ Takamatsu]{Teppei Takamatsu}

\address{Department of Mathematics, Graduate School of Science, Kyoto University, Kyoto 606-8502, Japan}
\email{teppeitakamatsu.math@gmail.com}

\maketitle

\begin{abstract}
Irreducible symplectic varieties are higher-dimensional analogues of K3 surfaces.
In this paper, we prove the Shafarevich conjecture for irreducible symplectic varieties of fixed deformation class.
We also observe that the second cohomological generalization of the Shafarevich conjecture does not hold in general, and discuss another formulation of a cohomological generalization.
\end{abstract}
\setcounter{section}{0}
\section{Introduction}
\label{sectionintro}
The Shafarevich conjecture for abelian varieties, which is proved by Faltings and Zarhin, states the finiteness of isomorphism classes of abelian varieties of a fixed dimension over a fixed number field admitting good reduction away from a fixed finite set of finite places.

In this paper, we shall prove an analogue of the Shafarevich conjecture for irreducible symplectic varieties.
Irreducible symplectic varieties are higher-dimensional analogues of
$K3$ surfaces. They play a significant role in algebraic geometry since they are known to be one of the building blocks of (weak) Calabi-Yau varieties (cf.\,Beauville--Bogomolov decomposition \cite{Beauville1983}, \cite{Bogomolov1974}), though there are very few known examples of irreducible symplectic varieties (namely, $K3^{[n]}$-type, generalized Kummer type, $OG_{6}$-type, and $OG_{10}$-type).
Recently, the arithmetic properties of irreducible symplectic varieties
have been studied by many people (see \cite{Andre1996}, \cite{Yang2019}, and \cite{Bindt2021}).

We fix a finitely generated normal $\Z$-algebra $R \subset \C$.
Let $F := \Frac R$ be the field of fractions of $R$.
It is a finitely generated extension of $\Q$ equipped with an embedding into $\C$.
For an irreducible symplectic variety $X$ over $F$
and a height $1$ prime ideal $\mathfrak{p} \in \Spec R$,
we say $X$ admits \textit{essentially good reduction} at $\mathfrak{p}$
if there exists a finite extension $F'/F$ which is unramified at $\mathfrak{p}$,
a birational map
$f_{\mathfrak{p}} \colon Y \overset{\simeq}{\dashrightarrow} X \otimes_F F'$
of irreducible symplectic varieties over $F'$,
and a proper smooth algebraic space
$\mathscr{Y} \to \Spec R'_{\mathfrak{p}}$ with
$\mathscr{Y} \otimes_{R'_{\mathfrak{p}}} F' \simeq Y$.
Here $R'_{\mathfrak{p}}$ is the valuation ring of $F'$ at
a prime above $\mathfrak{p}$.
We note that the extension $F'/F$ and a birational map $f_{\mathfrak{p}}$
may depend on $\mathfrak{p}$.
Also, we do not require a polarization on $X$ (or $Y$) extends
to a polarization on the integral model $\mathscr{Y}$.
It turns out that this seemingly artificial condition is
a natural condition for irreducible symplectic varieties.
We note that if $X$ admits essentially good reduction at $\mathfrak{p}$, then the $\ell$-adic cohomology
$H^2_{\et}(X_{\overline{F}}, \Q_{\ell})$
is unramified at $\mathfrak{p}$ when the residue characteristic at $\mathfrak{p}$
is different from $\ell$.
The Shafarevich conjecture for irreducible symplectic varieties is the following.

\begin{theoremsub}[See Theorem \ref{mainthm} for precise statements]
\label{thmintromain}
Let $X_0$ be an irreducible symplectic variety over $\C$ with the second Betti number $b_{2}(X_{0}) \geq 5$.
There exist only finitely many isomorphism classes of
irreducible symplectic varieties $X$ over $F$ such that
\begin{itemize}
\item $X$ admits essentially good reduction for 
any height $1$ prime ideal $\mathfrak{p} \in \Spec R$.
\item $X \otimes_F \C$ is deformation equivalent to $X_0$.
\end{itemize}
\end{theoremsub}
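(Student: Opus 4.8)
The plan is to follow the strategy pioneered by André \cite{Andre1996}: reduce the statement to the Faltings--Zarhin finiteness theorem for abelian varieties through the Kuga--Satake construction, and then recover the varieties from their second cohomology by means of the global Torelli theorem. First I would exploit the hypothesis that $X$ is deformation equivalent to $X_0$: this fixes, once and for all, the Beauville--Bogomolov--Fujiki lattice $(L,q) \simeq (H^2(X,\Z),q_X)$, the Hodge numbers, the Fujiki constant, and the monodromy group $\Mon \subset \algO(L)$. In particular the rank and the discriminant of the weight-two Hodge structure on $H^2_{\et}(X_{\overline{F}},\Q_\ell)$ are bounded independently of $X$, and the bound $b_2(X_0)\geq 5$ guarantees that the associated quadratic form has signature $(2,b_2-2)$ with $b_2-2\geq 3$, so that the orthogonal Shimura-variety and Kuga--Satake machinery applies. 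By the remark recorded just before the statement, essentially good reduction at a height $1$ prime $\mathfrak p$ forces $H^2_{\et}(X_{\overline{F}},\Q_\ell)$ to be unramified at $\mathfrak p$; since unramifiedness fails only in codimension $1$ and this holds at every height $1$ prime, the Galois action on $H^2_{\et}(X_{\overline{F}},\Q_\ell)$ is unramified over all of $\Spec R$ away from the residue characteristic.

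Next I would apply the Kuga--Satake construction to this K3-type Hodge structure, producing, over a finite extension of $F := \Frac R$, an abelian variety $A_X$ whose first cohomology encodes $(H^2(X),q_X)$ through the action of the even Clifford algebra. The essential point is that good reduction of the orthogonal Hodge structure propagates to good reduction of $A_X$: using integral models of the Shimura varieties attached to $(L,q)$ together with the compatibility of Kuga--Satake with specialization, the unramifiedness obtained above shows that $A_X$ has good reduction away from a fixed divisor on $\Spec R$. A finitely generated base version of the Faltings--Zarhin theorem (obtained by spreading out, extending the relevant data across codimension $1$, and specializing to a number-field point) then leaves only finitely many possibilities for $A_X$, hence for the Galois representation and for the integral polarized Hodge structure carried by $H^2$.

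It then remains to pass from finiteness of $H^2$ back to finiteness of varieties. Here I would invoke the global Torelli theorem of Verbitsky, Markman, and Huybrechts for the fixed deformation class of $X_0$: the Hodge structure on $H^2$, together with its parallel-transport datum, determines $X$ up to birational equivalence. Combined with the finiteness of the preceding step this bounds $X$ up to birational isomorphism. To upgrade this to finiteness of isomorphism classes over $F$, I would use two further inputs, tracking the $\Gal(\overline{F}/F)$-action throughout so that the count stays at the level of $F$-structures: the cone conjecture for irreducible symplectic varieties (Markman--Yoshioka, Amerik--Verbitsky), which shows that each birational class contains only finitely many isomorphism classes via finiteness of the relevant chambers modulo $\Aut$, and a descent argument bounding the $F$-forms of a fixed geometric isomorphism class. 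Assembling these finiteness statements yields the theorem.

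The main obstacle is the interface in the middle: making the Kuga--Satake construction interact correctly with \emph{essentially} good reduction over the finitely generated base $R$. One must show that a merely birational, unpolarized integral model — which is all the definition of essentially good reduction provides — nevertheless produces an abelian variety with honest good reduction outside codimension $1$, and that the finitely generated field version of Faltings' theorem is applicable. The two most delicate points are the control of the period point as $\mathfrak p$ ranges over the bad divisor, and the final passage from the birational Torelli conclusion to genuine $F$-isomorphism classes over the non-closed field $F$, where one must simultaneously bound birational models and Galois forms.
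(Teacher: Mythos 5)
Your overall architecture---unramifiedness of $H^2_{\et}$ from essentially good reduction, the uniform Kuga--Satake construction, Faltings' theorem over finitely generated fields, then recovery of the variety from its period data---is the same as the paper's, and your middle route (global Torelli plus the cone conjecture, in place of the paper's \'{e}tale period maps $j\colon M_{i,D_{i}(m)}\to \Sh_{D_{i}(m)}(\SO_{L_{i}})$, the bounded-polarization-degree Lemma \ref{lempol}, and the polarized Theorem \ref{thmpol}) could plausibly be organized into a proof of finiteness at the level of $\C$-isomorphism classes. Two corrections there: the Beauville--Bogomolov form on all of $H^{2}$ has signature $(3,b_{2}-3)$, not $(2,b_{2}-2)$, so Kuga--Satake cannot be applied to the unpolarized $H^{2}$ directly; one must choose a polarization $\lambda$ and work with the primitive part $c_{1}(\lambda)^{\perp}$, of signature $(2,b_{2}-3)$, and what makes this uniform over the a priori unbounded set of degrees $d$ is Nikulin's primitive embedding of each $L_{i}$ into the fixed unimodular lattice $L_{N}$ \cite{Nikulin1979}.

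The genuine gap is your final step, which you dispatch as ``a descent argument bounding the $F$-forms of a fixed geometric isomorphism class.'' No such general argument exists, and in this setting the statement you need is provably unreachable from the data your argument retains. The $F$-forms of $Y$ are classified by $H^{1}(\Gal(\overline{F}/F),\Aut(Y_{\overline{F}}))$, which is typically infinite even for finite automorphism groups; worse, for generalized Kummer varieties the paper exhibits (Proposition \ref{propgenkum}) infinitely many pairwise non-isomorphic twists, all with isomorphic \emph{unramified} Galois representations on $H^{2}_{\et}$, produced from the nontrivial kernel of $\Aut(Y)\to \GL(H^{2}_{\et}(Y_{\overline{F}},\widehat{\Z}))$. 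Every input in your chain---Kuga--Satake abelian variety, Galois representation, Hodge structure, Torelli, cone conjecture---factors through $H^{2}$, so it cannot separate these twists; the count at the end cannot be made finite by $H^{2}$-considerations alone. The finiteness must come from invoking the essentially-good-reduction hypothesis a second time, on each form individually. This is precisely the paper's first main difficulty, resolved in Sections \ref{sectionMatsusakaMumford} and \ref{sectiontwists}: the twisting cocycle, valued in the finite group $\Aut(X_{\overline{F}},\polL_{\overline{F}})$, is shown to be unramified at every height $1$ prime (Lemma \ref{Lemma:Unramifiedness}), using the Matsusaka--Mumford theorem for algebraic spaces (Theorem \ref{ClosedSubset}) and the specialization Lemma \ref{Lemma:Specialization}, which shows that the kernel of specialization on automorphisms is a $p$-group; Hermite--Minkowski finiteness \cite{Harada2009} then bounds the cocycles (Proposition \ref{Proposition:FinitenessTwists2}), and the unpolarized descent from the auxiliary field $E$ back to $F$ further requires the finiteness of twists from \cite[Theorem 1.0.1]{Takamatsu2021}. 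Without this unramifiedness-of-cocycles mechanism your proof cannot close.
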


Here, we note that we fix the deformation class of irreducible symplectic varieties, since the boundedness question for irreducible symplectic varieties is still open.
Since every K3 surfaces are deformation equivalent, Theorem \ref{thmintromain} is a direct generalization of the Shafarevich conjecture for K3 surfaces (\cite{Andre1996}, \cite{She2017}, \cite{Takamatsu2020a}).
We also note that we use a little different formulation in Theorem \ref{mainthm}. Theorem \ref{thmintromain} follows from Theorem \ref{mainthm} and Remark \ref{remnumdef}.

In Theorem \ref{thmintromain}, we do not assume the existence of a polarization on
integral models extending a given polarization on $X$, and we do not need the assumption on polarization degree.
In this sense, our result is a generalization of the result of Andr\'e (\cite[Theorem 1.3.1]{Andre1996}) for very polarized irreducible symplectic varieties. 
Note that the assumption $b_{2}(X_{0}) \geq 5$ is technical, but this assumption is satisfied for all the known examples of irreducible symplectic varieties.

On the other hand, it is natural to consider whether a cohomological generalization,
as studied in \cite{Takamatsu2020a}, holds true or not.
Actually, we can show that a direct cohomological generalization does not hold for generalized Kummer varieties, using non-trivial automorphisms acting trivially on the second cohomology.
Combining with Theorem \ref{thmintromain}, we can also show that good reduction criterion of the formulation given by \cite{Liedtke2018} does not hold for generalized Kummer varieties (see Proposition \ref{propgenkum}).

However, it can be shown that a cohomological generalization holds true
for certain classes of irreducible symplectic varieties,
including $K3^{[n]}$-type varieties.
\begin{theoremsub}[Cohomological generalization, see Theorem \ref{cohshaf}]
\label{thmintromain2}
Let $X_{0}$ be an irreducible symplectic variety over $\C$ with the second Betti number $b_{2}(X_{0}) \geq 5$.
Moreover, assume that $\Aut(X_0) \to H^2(X_0,\Q)$ is injective.
Let $\ell$ be a prime number invertible in $R$.
Then there exist only finitely many isomorphism classes of
irreducible symplectic varieties $X$ over $F$ such that
\begin{itemize}
\item the action of $\Gal(\overline{F}/F)$ on
$H^2_{\et}(X_{\overline{F}}, \Q_{\ell})$
is unramified at
any height $1$ prime ideal $\mathfrak{p} \in \Spec R$, and
\item $X \otimes_K \C$ is deformation equivalent to $X_0$.
\end{itemize}
\end{theoremsub}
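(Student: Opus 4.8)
The plan is to derive Theorem \ref{thmintromain2} from Theorem \ref{thmintromain} by establishing a good reduction criterion: under the injectivity hypothesis, the Galois representation on $H^2_{\et}(X_{\overline{F}}, \Q_\ell)$ is unramified at a height $1$ prime $\mathfrak{p}$ if and only if $X$ admits essentially good reduction at $\mathfrak{p}$. One direction is already recorded in the introduction, so the content lies in the converse. I would first note that the finite group $G := \Ker(\Aut(X) \to \algO(H^2))$ of automorphisms acting trivially on $H^2$ is a deformation invariant---such automorphisms deform flatly by local Torelli---so the injectivity hypothesis on $X_0$ forces $G$ to be trivial for every $X$ deformation equivalent to $X_0$. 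This triviality is exactly what makes the converse available.

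For the converse I would run the Kuga-Satake machinery (the hypothesis $b_2 \geq 5$ guarantees that this and the subsequent period-map arguments apply). Through it, the $\ell$-adic representation on $H^2$ is realized inside the endomorphisms of the Tate module of the associated Kuga-Satake abelian variety, so unramifiedness of $H^2$ at $\mathfrak{p}$ translates---after at worst a finite extension, harmless for essentially good reduction---into unramifiedness of that Tate module, whence good reduction of the Kuga-Satake abelian variety by the N\'eron-Ogg-Shafarevich criterion (Faltings-Zarhin). Feeding this into the global Torelli theorem for irreducible symplectic varieties and the deformation theory of the period map, one produces, over a finite unramified extension, an irreducible symplectic variety $Y$ with genuine good reduction whose Hodge structure on $H^2$ agrees with that of $X$. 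The vanishing of $G$ ensures that $X$ and $Y$ differ only by a birational modification, not by a nontrivial $G$-twist, so that $X$ itself satisfies the definition of essentially good reduction at $\mathfrak{p}$.

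Granting this criterion, the theorem is immediate: any $X$ as in the hypotheses has unramified $H^2$ at every height $1$ prime and is deformation equivalent to $X_0$, hence admits essentially good reduction everywhere, and Theorem \ref{thmintromain} supplies the finiteness. The main obstacle is the role of injectivity in the converse. When $G$ is nontrivial---as for generalized Kummer varieties---one can twist $X$ by classes in $H^1(\Gal(\overline{F}/F), G)$: since $G$ acts trivially on $H^2$, every twist retains the same unramified $H^2$, yet the twists need not have good reduction, and with ramification unconstrained there are infinitely many of them. Verifying that triviality of $G$ removes this twisting ambiguity, while correctly accounting for the birational modification and the unramified field extension built into essentially good reduction, is the delicate core of the argument.
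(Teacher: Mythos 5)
The core of your proposal---the claimed equivalence ``unramified $H^2_{\et}(X_{\overline{F}},\Q_\ell)$ at $\mathfrak{p}$ $\iff$ essentially good reduction at $\mathfrak{p}$'' under the injectivity hypothesis---contains a genuine gap, and it is exactly the step you compress into one sentence: ``Feeding this into the global Torelli theorem \ldots\ one produces, over a finite unramified extension, an irreducible symplectic variety $Y$ with genuine good reduction.'' Unramifiedness of the Galois representation (equivalently, good reduction of the Kuga--Satake abelian variety via N\'eron--Ogg--Shafarevich) does not produce a smooth proper model of any symplectic variety over the valuation ring: Torelli-type statements compare varieties over a field and give no mechanism for filling in a smooth special fiber. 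What you are asserting is precisely the good reduction criterion for irreducible symplectic varieties, which is open; even for K3 surfaces, Liedtke--Matsumoto need the additional hypothesis of potential semistable reduction plus the Kulikov--Pinkham--Persson classification of semistable degenerations and surface-specific birational geometry, none of which exists in higher dimension. The paper's Proposition \ref{propgenkum}(2) shows the criterion \emph{fails} without the injectivity hypothesis, and the paper nowhere claims (or needs) that injectivity restores it. Since unramified $H^2$ is a strictly weaker hypothesis than essentially good reduction, the set in Theorem \ref{thmintromain2} a priori contains the set in Theorem \ref{thmintromain}, so no formal deduction of the former from the latter is possible without your unproven converse; the reduction is structurally dead without it.

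The paper's actual proof of Theorem \ref{cohshaf} avoids constructing models of $X$ entirely. It works at the level of Shimura varieties: unramifiedness of $H^2$ lets one extend the period point $j_N\circ j(x)$ to an $R_\mathfrak{p}$-point of Kisin's integral model of $\mathrm{Sh}_{D_N(m)'}(\SO_{L_N})$, pull back along the finite \'etale $\widetilde{f}_N$ to the $\GSpin$ integral model, and then Hermite--Minkowski gives a uniform finite extension $F_N/F$ over which all these points lift; Faltings' finiteness for the Kuga--Satake abelian varieties plus quasi-finiteness of the period map then gives finiteness over $\C$. Your correct observation that injectivity kills the twisting ambiguity is used in the paper in a different way than you propose: since the Matsusaka--Mumford specialization argument (Proposition \ref{Proposition:FinitenessTwists2}) is unavailable without genuine models, the paper instead adds a level structure $\beta$ on $\bigoplus_{i\in I}H^i_{\et}$, which by condition $C_I$ rigidifies the moduli problem (trivial automorphism groups), so finiteness over $F$ follows from finiteness over $\C$ without any good reduction input. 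The unpolarized case is then handled by the lattice-discriminant and polarization-bounding lemmas, as in the proof of Theorem \ref{mainthm}.
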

\begin{remarksub}
If $X_{0}$ is $K3^{[n]}$-type or $OG_{10}$-type, then assumptions of Theorem \ref{thmintromain2} are satisfied. We can also formulate another generalization for the case of generalized Kummer varieties, by seeing third cohomologies (see Remark \ref{remcohshaf}).
\end{remarksub}
We will sketch the strategy of the proof of Theorem \ref{thmintromain}.
We note that there exists a Beauville--Bogomolov pairing, which is a generalization of an intersection pairing of K3 surfaces, on $H^{2}$ of an irreducible symplectic variety, and the Kuga--Satake construction also works for irreducible symplectic varieties.
Moreover, recently, Bindt \cite{Bindt2021} describes period maps in terms of moduli stacks.
Therefore, to show Theorem \ref{thmintromain}, we can take a similar approach to the case of K3 surfaces (\cite{Andre1996}, \cite{She2017}, \cite{Takamatsu2020a}),  i.e.\,we first show the Shafarevich conjecture for polarized irreducible symplectic varieties, and then we use the uniform Kuga--Satake construction, which is introduced in \cite{She2017}, \cite{Orr2018}, to show the unpolarized case.

However, to proceed with the proof for general irreducible symplectic varieties, there exist two extra difficulties.
First, for general irreducible symplectic varieties, moduli spaces of polarized varieties may not be fine moduli, since there may exist non-trivial automorphisms acting $H^2$ trivially.
To overcome such a difficulty, we need to show the finiteness of twists admitting smooth reduction. 
To do so, we will show the unramifiedness of Galois cocycles associated with such twists by using Matsusaka--Mumford theorem.

Secondary, to conclude Theorem \ref{thmintromain} from the argument using the uniform Kuga--Satake construction, we need to bound the Beauville--Bogomolov squares of polarizations on irreducible symplectic varieties of bounded discriminants of geometric Picard lattices. 
In the case of K3 surfaces, such a statement can be justified by the action of Weyl groups on Picard lattices, but for general irreducible symplectic varieties, we need additional arguments since there may exist birationally equivalent non-isomorphic irreducible symplectic varieties. 
Such a problem is related to the cone conjecture for irreducible symplectic varieties, and actually already discussed in \cite{Takamatsu2021}.

After I uploaded the first version of this paper on arXiv, this paper was superseded by the joint paper with Lie Fu, Zhiyuan Li, and  Haitao Zou, who proved similar results independently.

The outline of this paper is as follows.
In Section \ref{sectionmoduli}, we recall the definition and properties of  irreducible symplectic varieties. We also recall the definition of moduli spaces and the uniform Kuga--Satake construction.
In Section \ref{sectionMatsusakaMumford}, we formulate Matsusaka--Mumford theorem for algebraic spaces.
In Section \ref{sectiontwists}, we prove the Shafarevich conjecture for twists of polarized  irreducible symplectic varieties, by using the results in Section \ref{sectionMatsusakaMumford}.
In Section \ref{sectionpfmainthm}, we prove the main theorem Theorem \ref{thmintromain}.
In Section \ref{sectionremcoh}, we give counterexamples to the second cohomological formulation of the Shafarevich conjecture for irreducible symplectic varieties, and prove Theorem \ref{thmintromain2}.


\subsection*{Acknowledgments}
The author is deeply grateful to my advisor Naoki Imai for his deep encouragement and helpful advice. The author would like to thank Tetsushi Ito for teaching the author Lemma \ref{Lemma:Specialization} and an idea of Proposition \ref{Proposition:FinitenessTwists2}.
The author would like to thank Yoichi Mieda for teaching the author an idea using integral models in the proof of Theorem \ref{thmintromain2} and Theorem \ref{cohshaf}. Moreover, the author would like to thank Yohsuke Matsuzawa, Shou Yoshikawa, Kenta Hashizume, Alexei N. Skorobogatov, Yuki Yamamoto, Takeshi Saito for helpful suggestions.
The author was supported by JSPS KAKENHI Grant number JP19J22795.

\section{Moduli of irreducible symplectic varieties}
\label{sectionmoduli}

\begin{definitionsub}
Let $k$ be a field of characteristic $0$.
A smooth projective variety $X$ over $k$
is called an \emph{irreducible symplectic} variety if and only if $X_{\overline{k}}$ is simply connected of even dimension $2n$ 
and there exists $\omega \in H^{0}(X, \Omega_{X/k}^{2})$ uniquely up to constant,
which is non-degenerate.
\end{definitionsub}

\begin{defn-propsub}[see also {\cite[Lemma 4.2.1]{Bindt2021}}]
Let $X$ be an irreducible symplectic variety over a field $k$ of characteristic $0$.
Take a finitely generated subfield $k'$ of $k$ where $X$ admits a model $X'$ over $k'$, and choose an embedding $\iota\colon k' \hookrightarrow \C$.
We put $X_{\iota}:= X'\otimes_{k'} \C$ via $\iota$.
By \cite[Proposition 23.14]{Gross2003}, there exists a positive rational number $\alpha_{\iota}$ and a primitive quadratic form $q_{\iota}$ on $H^{2}(X_{\iota}(\C), \Z)$, such that $q_{\iota}$ is of signature $(3, b_{2}(X_{\iota})-3)$, 
\[
q_{\iota}(x)^{n} = \alpha_{\iota} \int_{X_{\iota}} x^{2n}
\]
for any $x \in H^{2}(X_{\iota}(\C), \Z)$, and $q_{\iota}(x)$ is positive for an ample class $x \in H^{2}(X_{\iota}(\C), \Z)$.
By the Artin comparison, we obtain a quadratic form $q_{\iota,\ell}$ on $H^{2}_{\et}(X_{\overline{k}}, \Z_{\ell})$ for any prime number $\ell$.
Then the following holds.
\begin{enumerate}
    \item
    $\alpha_{\iota}$ and $q_{\iota,\ell}$ are independent of the choice of $k'$ and $\iota$.
    \item
    $q_{\iota,\ell}$ is $G_{k}$-equivariant.
\end{enumerate}
We call the quadratic form $q_{\iota,\ell}$ the Beauville--Bogomolov quadratic form on $X$, and $\alpha_{\iota}$ the Fujiki constant of $X$.
Moreover, we denote the non-degenerate even symmetric bilinear pairing associated with $q_{\iota,\ell}$ by $b_{X}$ (this is the $2$-times the usual associated symmetric bilinear form). 
We call $b_{X}$ the Beauville--Bogomolov pairing.  
\end{defn-propsub}

\begin{proof}
First, we shall prove the assertion $(1)$.
The problem is easily reduced to the case $k'=k$.
Let $\iota, \iota' \colon k' \hookrightarrow \C$ be two embeddings.
By \cite[Lemma 2.1.1]{Yang2019}, there exists a real number $r$ such that $q_{\iota,\ell} = r q_{\iota',\ell}$.
By comparing values on ample classes on $X$, we can show that $r$ is independent on $\ell$ and $r \in \Q_{>0}$.
Since $q_{\iota, \ell}$ and $q_{\iota', \ell}$ are primitive, $r$ is an $\ell$-adic unit for any $\ell$.
Therefore, we have $r=1$.
By comparing values on ample classes on $X$, we also have $\alpha_{\iota} = \alpha_{\iota'}$.
The assertion $(2)$ follows from the same argument as in \cite[Proposition 2.1.5]{Yang2019}.
\end{proof}

%
%
Now we can introduce the moduli functor of irreducible symplectic varieties.

\begin{definitionsub}[\cite{Gritsenko2010}]
\begin{enumerate}
    \item 
    Let $S$ be a $\Q$-scheme.
Let $X$ be a proper smooth algebraic space over $S$, and we put $\lambda \in \Pic_{X/S}(S)$.
The pair $(X, \lambda)$ is called a primitively polarized irreducible symplectic family of Beauville degree $d$, if and only if any geometric fiber of $(X, \lambda)$ is an irreducible symplectic variety $X_{s}$ with a primitive ample line bundle $\lambda_{s}$ of Beauville degree $d$, i.e.\,$b_{X_{s}}(\lambda_{s},\lambda_{s})=d$.
We say that $(X, \lambda)$ has the Fujiki constant $c$ if the Fujiki constant of each geometric fiber of $X$ is $c$.
    \item
    Let $n,d,c$ be positive integers.
We define the moduli stack $M_{2n,d, c}$ over $\Q$ as the groupoid fibration whose fiber over a scheme $S$ over $\Q$ consists of primitively polarized irreducible symplectic family of Beauville degree $d$ and Fujiki constant $c$ over $S$ with isomorphisms.
We also define the moduli stack $M_{2n,d,c}'$ over $\Q$ as the groupoid fibration whose fiber over a scheme $S$ over $\Q$ is a subgroupoid of $M_{2n,d,c}(S)$ which consists of objects whose geometric fiber have the second Betti number greater than $3$.
\end{enumerate}
\end{definitionsub}

\begin{propsub}
$M_{2n,d,c}$ is representable by a Deligne-Mumford stack of finite type over $\Q$.
\end{propsub}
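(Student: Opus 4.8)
The plan is to realize $M_{2n,d,c}$ as a global quotient stack attached to a Hilbert scheme and then to verify the three required properties in turn: algebraicity, the Deligne--Mumford property, and finite type. Once a uniform very-ampleness bound for the polarizations is in hand, algebraicity and the Deligne--Mumford property follow from standard constructions, so the heart of the matter is a boundedness statement, and I would organize the argument around proving that first.

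For boundedness I would use two inputs. First, every irreducible symplectic variety $X$ has trivial canonical bundle, since the top power $\omega^{n}$ of the symplectic form trivializes $K_X$; in particular $K_X \cdot \lambda^{2n-1}=0$. Second, the self-intersection $\int_X \lambda^{2n}$ is pinned down by the numerical data: the Fujiki relation of the preceding Definition--Proposition gives $q_X(\lambda)^{n} = c\int_X \lambda^{2n}$, and since $b_X(\lambda,\lambda)=d$ determines $q_X(\lambda)$, the number $\int_X \lambda^{2n}$ depends only on $n,d,c$. By the Riemann--Roch type inequalities of Koll\'ar--Matsusaka, the Hilbert polynomial $m \mapsto \chi(X,\lambda^{\otimes m})$ is then constrained to finitely many possibilities, being determined up to finitely many choices by $\lambda^{2n}$ and $K_X\cdot\lambda^{2n-1}=0$. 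Matsusaka's big theorem, applied uniformly over this finite set of Hilbert polynomials, supplies an integer $m_0$ such that $\lambda^{\otimes m_0}$ is relatively very ample on every member of the family; since $K_X=0$, Kodaira vanishing gives $\dim H^0(X,\lambda^{\otimes m_0})=\chi(X,\lambda^{\otimes m_0})$, so all these varieties embed into a fixed $\PP^{N}$. Hence they are parametrized by a locally closed subscheme $\mathcal H$ of a finite disjoint union of Hilbert schemes of $\PP^{N}$, which is of finite type over $\Q$.

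Next I would check that the locus $\mathcal H$ on which the fibers are irreducible symplectic, carrying the prescribed primitive polarization of Beauville degree $d$ and Fujiki constant $c$, is locally closed and $\PGL_{N+1}$-invariant. Smoothness and properness of the fibers, ampleness of the polarization, and non-degeneracy of the symplectic form at every point are open conditions; $h^{0}(\Omega^2_{X/S})=1$ is upper semicontinuous; simple connectivity of the geometric fibers is locally constant in a smooth proper family; and the discrete invariants $b_X(\lambda,\lambda)=d$, primitivity, and the Fujiki constant $c$ are locally constant. Quotienting by the projective reparametrizations then yields $M_{2n,d,c}\cong[\mathcal H/\PGL_{N+1}]$, an algebraic stack of finite type over $\Q$. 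For the Deligne--Mumford property I would show the diagonal is unramified, equivalently that the stabilizers are finite and reduced: for an irreducible symplectic variety one has $H^0(X,T_X)\cong H^0(X,\Omega^1_X)=0$, the isomorphism coming from the symplectic form and the vanishing from simple connectivity, so the automorphism group scheme of a polarized fiber $(X,\lambda)$ has trivial Lie algebra and, being proper, is finite; in characteristic $0$ it is automatically reduced, hence unramified.

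I expect the finite-type (boundedness) step to be the main obstacle, precisely because boundedness of irreducible symplectic varieties is open in general. The point that makes it work here is that one never needs to fix, or even know, the deformation type: fixing the Beauville degree and the Fujiki constant already determines $\int_X \lambda^{2n}$, and the triviality of $K_X$ kills the remaining intersection number $K_X\cdot\lambda^{2n-1}$ entering Koll\'ar--Matsusaka and Matsusaka's theorem, so a uniform very-ampleness bound, and hence boundedness, is forced by $n,d,c$ alone.
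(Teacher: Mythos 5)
Your proposal is correct and takes essentially the same route as the paper: the Fujiki relation pins down $\lambda^{2n}$ from $(n,d,c)$, the Koll\'ar--Matsusaka inequalities bound the Hilbert polynomials, a Hilbert-scheme construction yields an atlas of finite type over $\Q$, and $H^0(X,T_X)=0$ gives the unramified diagonal (equivalently, finite reduced stabilizers). The only difference is presentational: you spell out the embedding via Matsusaka's big theorem and the quotient $[\mathcal{H}/\PGL_{N+1}]$, whereas the paper compresses exactly this step into a citation of Gritsenko--Hulek--Sankaran, Theorem 1.5.
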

\begin{proof}
This essentially follows from \cite[Theorem 1.5]{Gritsenko2010}.
Indeed, since $\lambda^{2n}$ is bounded by our assumption, by \cite[Theorem]{Kollar1983}, there are only finitely many possible Hilbert polynomials of objects in $M_{2n,d,c}$ (see also \cite[Theorem 26.15]{Gross2003}).
By the same argument as in \cite[Theorem 1.5]{Gritsenko2010}, by using the smooth part of Hilbert schemes, we obtain a smooth surjection from a quasi-projective scheme of finite type over $\Q$ to $M_{2n,d,c}$.
Since $H^{0}(T_{X})= 0$ for an irreducible symplectic variety $X$ over $k$, $M_{2n,d,c}$ admits an unramified diagonal. Now it finishes the proof. 
\end{proof}

%
We also need the moduli space of oriented polarized irreducible symplectic varieties.
\begin{definitionsub}[{\cite[Definition 4.3.3]{Bindt2021}}]
\begin{enumerate}
\item
Let $S$ be a $\Q$-scheme and $f\colon X \rightarrow S$ a smooth proper morphism of algebraic spaces whose fibers are irreducible symplectic varieties. An orientation on $X/S$ is an isomorphism of sheaves of finite abelian groups
\[
\omega\colon \Z/4\Z \rightarrow \det R^{2}_{\et}f_{\ast}\mu_{4} 
\]
on $S_{\et}$.
\item
We define $\widetilde{M}_{2n,d,c}$ as the groupoid fibration whose fiber over a scheme $S$ consists of $(X \rightarrow S,\lambda, \omega)$, where $(X \rightarrow S, \lambda) \in M_{2n,d,c}'(S)$, and $\omega$ is an orientation on $X \rightarrow S$.
\end{enumerate}
\end{definitionsub}

We recall the definition of certain Shimura varieties.

\begin{defn-propsub}
We denote all the connected components of the moduli stack $\widetilde{M}_{2n,d,c}$ by $\{M_{i}\}_{i\in I_{2n,d,c}}$.
We have the universal object $(f\colon\mathcal{X}\rightarrow M_{i}, \lambda, \omega)$.
Let $x_{0}$ be a $\C$-point of $M_{i}$.
We put the fiber over $x_{0}$ of $(\mathcal{X}\rightarrow M_{i}, \lambda, \omega)$ as $(X_{0}, \lambda_{0}, \omega_{0})$, and we put 
$\Lambda_{0}:= H^{2}(X_{0}, \Z(1))$. 
We note that $\Lambda_{0}$ has a $\Z$-lattice structure (i.e.\,$\Z$-valued non-degenerate symmetric bilinear pairing structure) 
by the Beauville--Bogomolov quadratic pairing.
In the following, when we consider isomorphism classes of lattices, we always consider lattice isometry classes.
Then the isomorphism class of $(\Lambda_{0} \otimes \widehat{\Z}, c_{1}(\lambda_{0}), \omega_{0})$ is independent of the choice of $x_{0}$.
We denote $\Lambda_{0}$ by $\Lambda_{i}$ (note that this may depend on the choice of $x_{0}$), $c_{1}(\lambda_{0})$ by $v_{i}$, and $\omega_{0}$ by $\omega_{i}$.
We also denote the orthogonal complement $v_{i}^{\perp} \subset \Lambda_{i}$ by $L_{i}$.
\end{defn-propsub}

\begin{proof}
See \cite[Lemma 4.5.1]{Bindt2021}.
\end{proof}

\begin{definitionsub}[{\cite[Theorem 4.5.2]{Bindt2021}}]
\begin{enumerate}
    \item
We put 
\[
D_{i} :=\{ 
g \in \mathrm{SO}_{\Lambda_{i}}(\widehat{\Z}) \mid g(v_{i}) =v_{i}
\},
\]
which can be seen as a subgroup of $\SO_{L_{i}}(\widehat{\Z})$.
We call $D_{i}$ the discriminant kernel.
\item
For any open normal subgroup $K\subset D_{i}$,
we recall the definition of a $K$-level structure on $(f\colon X\rightarrow S, \lambda, \omega)\in M_{i} (S)$.
For simplicity, we assume that $S$ is a locally noetherian connected $M_{i}$-scheme (see the proof of \cite[Theorem 4.5.2]{Bindt2021} or \cite[Definition 3.3.3]{Yang2019}).
We fix a geometric point $s \rightarrow S$.
In this case, a $K$-level structure on $(f\colon X\rightarrow S, \lambda, \omega)$ is a $\pi_{1}^{\mathrm{\et}}(S,s)$-invariant $K$-orbit of an isometry
\[
\Lambda_{i} \simeq H^2_{\et}(X_{s},\widehat{\Z}(1))
\]
which sends $v_{i}$ to $c_{1}(\lambda_{s})$ and $\omega_{i}$ to $\omega_{s}$.
We also define the stack $M_{i,K}$ to be the stack of $M_{i}$ with a level $K$-structure.

\end{enumerate}
\end{definitionsub}

\begin{definitionsub}
Let $L$ be a $\Z$-lattice of signature $(2,r)$ with $r\geq 1$.
\begin{enumerate}
    \item 
    We put
    \[
    \Omega_{\SO_{L}}^{\pm}:=
    \{\text{oriented positive definite planes in } L_{\R}
    \}.
    \]
    Then $\Omega_{\SO_{L}}^{\pm}$ is naturally identified with $X_{\SO_{L}}$ which gives the Shimura datum $(\SO_{L}, X_{\SO_{L}})$ with a reflex field $\Q$. 
    For any compact open subgroup $K \subset \SO_{L} (\A_{f})$,
    we write $\Sh_{K}(\SO_{L})$ for the associated Shimura stack of level $K$ (cf.\, \cite[Section 4.4.1]{Bindt2021}).
    \item
    Take a positive definite subspace of $L_{\Q}$, and let $e_{1},e_{2}$ be its orthogonal basis. We put $J:= e_{1}e_{2} \in C^{+}(L_{\R})$, and
    \[
    \psi\colon \mathbb{S} \rightarrow \GSpin_{L,\R}; \alpha + \beta i \mapsto \alpha + \beta J.
    \]
    Let $X_{\GSpin_{L}}$ be a $\GSpin_{L} (\R)$-conjugacy class containing $\psi$.
    Then $(\GSpin_{L}, X_{\GSpin_{L}})$ is a Shimura datum.
    For any compact open subgroup $K \subset \GSpin_{L} (\A_{f})$,
    we write $\Sh_{K} (\GSpin_{L})$ for the Shimura stack of level $K$.
    \item
    Let $V_{L} := C(L)$ be the Clifford algebra.
    We put 
    \[
    \phi \colon V_{L} \times V_{L} \rightarrow \Z ; (x,y) \mapsto \tr_{V_{L}/\Z} (xy^{\ast}).
    \]
    Here, $\tr_{V_{L}/\Z}$ means the trace of a left multiplication map, and $\ast$ denotes the natural anti-automorphism on the Clifford algebra.
    Let $(\GSp_{V_{L}}, X_{\GSp_{V_{L}}})$ be the Shimura datum associated with $(V_{L}, \psi)$.
     For any compact open subgroup $K \subset \GSp_{V_{L}}(\A_{f})$,
    we write $\Sh_{K} (\GSp_{V_{L}})$ for the Shimura stack of level $K$.
\end{enumerate}
\end{definitionsub}

\begin{propsub}
The triple
$(R^{2}f_{\mathrm{an},\ast}\Z(1), c_{1}(\lambda_{\mathrm{an}}), \omega_{\mathrm{an}})$ gives rise to a morphism of complex Deligne-Mumford stacks
\[
j\colon M_{i,\C} \rightarrow \mathrm{Sh}_{D_{i}}(\mathrm{SO}_{L_{i}})_{\C}.
\]
This morphism descends to a morphism 
\[
j \colon M_{i} \rightarrow \mathrm{Sh}_{D_{i}}(\mathrm{SO}_{L_{i}})
\]
defined over $\Q$, and $j$ is \'{e}tale.
Moreover, for any compact open subgroup $K \subset D_{i}$,
there also exist a natural \'{e}tale morphism
\[
j_{K} \colon M_{i,K} \rightarrow \mathrm{Sh}_{K}(\mathrm{SO}_{L_{i}}),
\]
over $\Q$, which is compatible with $j$.
\label{propperiod}
\end{propsub}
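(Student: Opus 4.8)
The plan is to build the period morphism first over $\C$ by Hodge theory, to prove it is étale through the local Torelli theorem, and then to descend it to $\Q$ by means of the canonical model of the Shimura stack together with the Galois-equivariance of the étale realization recorded above.

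First I would produce $j_{\C}$ analytically. Over $M_{i,\C}$ the local system $R^{2}f_{\mathrm{an},\ast}\Z(1)$ underlies a polarized variation of Hodge structure of weight $0$ and K3 type, polarized by the Beauville--Bogomolov form. Passing to the orthogonal complement of the flat section $c_{1}(\lambda_{\mathrm{an}})$ gives a weight-$0$ variation on a lattice isometric to $L_{i}$, whose Beauville--Bogomolov form has signature $(2, b_{2}-3)$ because $v_{i}$ is a positive class (here $b_{2}-3\ge 1$ is guaranteed by the defining condition of $M_{2n,d,c}'$). The Hodge line $H^{2,0}\subset L_{i,\C}$ is isotropic with positive associated plane, so its period point lies in $\Omega^{\pm}_{\SO_{L_{i}}}=X_{\SO_{L_{i}}}$; the orientation $\omega_{\mathrm{an}}$ and the tautological $D_{i}$-orbit of isometries $\Lambda_{i}\simeq H^{2}$ supply the level-$D_{i}$ structure. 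Thus the triple defines $j_{\C}\colon M_{i,\C}\to \Sh_{D_{i}}(\SO_{L_{i}})_{\C}$.

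Next I would verify étaleness by computing the differential. The tangent space of $M_{i}$ at $[X]$ is the space of polarized first-order deformations, and the symplectic form identifies $T_{X}\simeq \Omega^{1}_{X}$, so $H^{1}(X,T_{X})\simeq H^{1,1}(X)$. The differential of $j_{\C}$ is the cup-product map sending a deformation class to the infinitesimal variation of the Hodge line inside $\Hom(H^{2,0}, (H^{2,0})^{\perp}/H^{2,0})$, and by Beauville's local Torelli theorem for irreducible symplectic manifolds this is an isomorphism onto the tangent space of $\Omega^{\pm}_{\SO_{L_{i}}}$. Hence $j_{\C}$ is étale.

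The hard part is the descent to $\Q$. Both sides carry models over $\Q$, the target being the canonical model of the Shimura datum $(\SO_{L_{i}}, X_{\SO_{L_{i}}})$, whose reflex field is $\Q$. To descend $j_{\C}$ it suffices to show it is equivariant for the two $\Aut(\C/\Q)$-actions, and the essential point is to match the Betti realization used to define $j_{\C}$ with the étale realization $H^{2}_{\et}(X_{s},\widehat{\Z}(1))$: the latter is $G_{k}$-equivariant for the Beauville--Bogomolov form by the Definition--Proposition above, and it agrees with the former through the Artin comparison isomorphism. Combined with the $\Q$-rationality of $j_{\C}$ on special (CM) points and the rigidity of morphisms between smooth separated $\Q$-stacks with $j_{\C}$ étale, this forces $j_{\C}$ to descend; this realizes exactly the construction of \cite[Theorem 4.5.2]{Bindt2021}, following the method developed for K3 surfaces. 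For the level-$K$ statement, both $M_{i,K}\to M_{i}$ and $\Sh_{K}(\SO_{L_{i}})\to \Sh_{D_{i}}(\SO_{L_{i}})$ are finite étale with compatible $D_{i}/K$-actions, and a $K$-level structure $\Lambda_{i}\simeq H^{2}_{\et}(X_{s},\widehat{\Z}(1))$ sending $(v_{i},\omega_{i})$ to $(c_{1}(\lambda_{s}),\omega_{s})$ maps tautologically to a $K$-level structure on the Shimura side; this defines $j_{K}$ over $\Q$, étale since $j$ is and the covers are finite étale, and compatible with $j$ by construction.
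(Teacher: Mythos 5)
Your proposal is correct and takes essentially the same route as the paper: the paper's own proof is a one-line citation of \cite[Theorem 4.5.2]{Bindt2021}, and your construction (K3-type weight-$0$ VHS on $c_{1}(\lambda)^{\perp}$, étaleness via Beauville's local Torelli theorem, descent to $\Q$ via the canonical model with matching of Betti and étale realizations, and the tautological treatment of level-$K$ structures) is precisely the argument behind that citation, adapted from the K3-surface case. The only place you are substantially sketchier than the cited source is the descent step, where the asserted Galois-equivariance at special points needs a main-theorem-of-CM type input; since the paper itself defers entirely to the reference for this, your sketch does not diverge from its approach.
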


\begin{proof}
See \cite[Theorem 4.5.2]{Bindt2021} and its proof.
\end{proof}

Next, to state the main theorem, we introduce the notion of a $\widehat{\Z}$-numerical equivalence class of irreducible symplectic varieties.

\begin{definitionsub}
We fix a positive integer $n$.
Let $k$, $k'$ be fields of characteristic $0$, and $X$ (resp.\,$X'$) a $2n$-dimensional irreducible symplectic variety over $k$ (resp.\,$k'$).
We say $X$ and $X'$ are \emph{$\widehat{\Z}$-numerically equivalent} if there exists an isomorphism of abelian groups
\[
g\colon
H^{2}_{\et}(X_{\overline{k}},\widehat{\Z}) \simeq H^{2}_{\et} (X'_{\overline{k'}}, \widehat{\Z})
\]
such that 
$g$ preserves the Beauville--Bogomolov pairing and $X$ and $X'$ have the same Fujiki constant.
\end{definitionsub}

\begin{remarksub}
\label{remnumdef}
Let $k$ be a field of characteristic $0$, and $n$ a positive integer.
\begin{enumerate}
    \item 
Let $X, Y$ be irreducible symplectic varieties over $k$.
Take a finitely generated subfield $k' \subset k$ where $X$ and $Y$ admit models $X'$ and $Y'$ over $k'$.
Take embeddings $\iota_{i} \colon k' \rightarrow \C$ for $i=1,2$.
We put $X_{\C}$ as the base change of $X'$ via $\iota_{1}$, and $Y_{\C}$ as the base change of $Y'$ via $\iota_{2}$.
Suppose that $X_{\C}$ and $Y_{\C}$ are deformation equivalent (see \cite[Theorem 5.0.1]{Takamatsu2021} for the definition).
Then $X$ and $Y$ are $\widehat{\Z}$-numerically equivalent.
   \item
   The converse of (1) holds in the following sense.
Let $N$ be a $\widehat{\Z}$-numerically equivalent class of $2n$-dimensional irreducible symplectic varieties.
Since possible $\Z$-lattice isometry classes of bounded ranks and discriminants are finitely many, there exist $\Z$-lattices $L_{1}, \ldots, L_{m}$ satisfying the following$\colon$
For any irreducible symplectic variety $X$ over $k$ whose $\widehat{\Z}$-numerically equivalent class is $N$, any subfield $k' \subset k$ with an embedding $k'\hookrightarrow \C$, and any irreducible symplectic variety $X'$ over $k'$ with $X'\times_{k'}k\simeq X$, the lattice $H^{2}(X'\times_{k'}\C,\Z)$ is isometric to some $L_{i}$.
Therefore, by using \cite[Corollary 26.17]{Huybrechts1999}, there exist deformation classes of complex irreducible symplectic manifolds $D_{1},\ldots, D_{m'}$ such that for any $k, k'\hookrightarrow \C, X, X'$ as above, the deformation class of $X'\times_{k'}\C$ is one of $D_{1},\ldots, D_{m'}$.
\end{enumerate}
\end{remarksub}

\begin{definitionsub}
We fix a positive integer $n$.
We also fix $N$, a $\widehat{\Z}$-numerically equivalent class of irreducible symplectic varieties.
\begin{enumerate}
\item
Let $M_{2n,d,c,N}$ be a substack of $\widetilde{M}_{2n,d,c}$ whose objects consist of irreducible symplectic families such that each geometric fiber lies on $N$.
Note that this substack is written as a union $\bigcup_{i\in I_{2n,d,c,N}} M_{i}$.
\item
For any $i\in I_{2n,d,c,N}$, the lattice $L_{i}$ is of signature $(3, b_{2}-3)$.
Here, $b_{2}$ is a second Betti number of irreducible symplectic varieties of objects in $M_{i}$, which depends only on $N$.
We put
\[
L_{N} := -( \mathbb{E}_{8}^{\oplus (\lceil \frac{b_{2}-3}{8} \rceil+ \lceil \frac{b_{2}-1}{8}\rceil)} \oplus \mathbb{H}^{\oplus 2}),
\]
which is an even unimodular lattice.
By \cite[Corollary 1.12.3]{Nikulin1979}, for any $d, c$ and $i \in I_{2n,d,c,N}$,
we have a primitive embedding of $\Z$-lattices
$
L_{i} \hookrightarrow L_{N}.
$
\end{enumerate}
\label{deflattice}
\end{definitionsub}

Next, we define the compact open subgroup.
In the following of this paper, we fix an integer $m\geq 3$.

\begin{definitionsub}
\begin{enumerate}
    \item
    For any $i \in I_{2n,d,c}$.
    we put
    \[
    D_{i} (m) := f_{i} (\mathbb{K}_{i,m}^{\mathrm{sp}}),
    \]
    where
    \[
    \mathbb{K}_{i,m}^{\mathrm{sp}} := 
    \{
    g \in \mathrm{GSpin}_{L_{i}} (\widehat{\Z}) \mid
    g=1 \textup{ in } C^{+}(L_{i,\widehat{\Z}/m\widehat{\Z}})
    \},
    \]
    and $f_{i} \colon \GSpin_{L_{i}} \rightarrow \SO_{L_{i}}$ is the natural morphism defined by the conjugation.
    We also put $V_{i}:=V_{L_{i}}$ and
    \[
    \mathbb{K}_{m}:= \{ g \in \GSp_{V_{i}}(\widehat{\Z}) \mid g = 1 \ \text{mod} \ m \}.
    \]
        \item
    Let $N$ be as in Definition \ref{deflattice}.
    We put 
    \[
    D_{N}(m) := f_{N}(\mathbb{K}_{N,m}^{\mathrm{sp}}),
    \]
    where
     \[
    \mathbb{K}_{N,m}^{\mathrm{sp}} := 
    \{
    g \in \mathrm{GSpin}_{L_{N}} (\widehat{\Z}) \mid
    g=1 \textup{ in } C^{+}(L_{N,\widehat{\Z}/m\widehat{\Z}})
    \},
    \]
    and $f_{N}\colon \GSpin_{L_{N}} \rightarrow \SO_{L_{N}}$ is the conjugate morphism.
    We also put $V_{N}:=V_{L_{N}}$ and
    \[
    \mathbb{K}_{m} : = \{ g \in \GSp_{V_{N}} (\widehat{\Z}) \mid g =1 \ \text{mod} \ m \}.
    \]
\end{enumerate}
\end{definitionsub}

%

Fix a $\widehat{\Z}$-numerically equivalent class of irreducible symplectic varieties $N$.
By the argument in \cite[Section 3.3]{Takamatsu2020a}, we have the diagram
\[
\xymatrix{
&&  \Sh_{\mathbb{K}_{N,m}^{\mathrm{sp}}}(\GSpin_{L_{N}}) \ar[r]^-{h_{N}} \ar@<0.5ex>[d]^-{f_{N}}  & \Sh_{\K_{m}}(\GSp_{V_{N}}) \\
 M_{i,D_{i}(m)}  \ar[r]^-{j} & \Sh_{D_{i}(m)}(\SO_{L_{i}}) \ar[r]^-{j_{N}} & \Sh_{D_{N}(m)}(\SO_{L_{N}})\ar@<0.5ex>[u]^-{\delta_{N}}  & \\
}
\]
for any $i\in I_{2n,d,c,N}$, provided that $F$ contains certain number field $E_{2n,N,m}$.
Here, the field $E_{2n,N,m}$ is taken as in \cite[Section 5.5]{Rizov2010} so that we can take a section $\delta_{N}$ of $f_{N}$.
We note that three varieties on the right end (and morphisms between them) depend only on $2n$, $m$, and $N$.

\section{The Matsusaka--Mumford theorem and specialization lemmas}
\label{sectionMatsusakaMumford}
The Matsusaka--Mumford theorem \cite{Matsusaka1964}
states that a birational map between two algebraic varieties
over the field of fractions of a discrete valuation ring
specializes to a birational map between the special fibers
if special fibers are non-ruled.

It is well-known that this theorem remains true for
birational maps between algebraic spaces which are not necessarily schemes.
(Such birational maps appear naturally in the study of
irreducible symplectic varieties. See \cite[Section 4.4]{Liedtke2018}.)
In this section, we give a statement and a brief sketch of the proof
of the Matsusaka--Mumford theorem for algebraic spaces
because we could not find an appropriate reference.

We recall that if $X$ is integral and separated, then there exists an open subspace which is a scheme containing a generic point of $X$. Moreover, there exists the largest such open subspace, which is called the schematic locus of $X$.
\begin{definitionsub}
Let $k$ be a field.
Let $X$ be an $n$-dimensional integral separated of finite type algebraic space over $k$.
We say $X$ is ruled if the schematic locus $X'$ of $X$ is ruled, i.e.\,$X'$ is birationally equivalent to $\PP^{1}_{k} \times Y$ for some scheme $Y$ of dimension $n-1$ over $k$.
\end{definitionsub}

In this section,  let $R$ be a discrete valuation ring,
and $K = \Frac R$ the field of fractions.

\begin{theoremsub}[Matsusaka--Mumford]
\label{ClosedSubset}
Let $\mathcal{X}, \mathcal{Y}$ be integral smooth proper algebraic spaces over $R$.
Suppose that the special fiber $\mathcal{Y}_{s}$ is not ruled.
Let $f \colon \mathcal{X}_{\eta} \overset{\simeq}{\dashrightarrow} \mathcal{Y}_{\eta}$
be a birational map between the generic fibers.
Then there exist closed algebraic subspaces
$V \subset \mathcal{X}$, $W \subset \mathcal{Y}$
with special fibers $V_{s}, W_{s}$ satisfying $V_{s} \neq \mathcal{X}_{s}$, $W_{s} \neq \mathcal{Y}_{s}$
such that
$f$ extends to an isomorphism
\[
  \widetilde{f} \colon 
  \mathcal{X} \setminus V \overset{\simeq}{\to}
  \mathcal{Y} \setminus W.
\]
\end{theoremsub}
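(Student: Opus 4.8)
The plan is to reduce the assertion to the classical Matsusaka--Mumford theorem for schemes, isolating the single place where the algebraic-space structure must be dealt with directly. The key observation is that the conclusion only constrains the behaviour of $f$ over the codimension-one points of the special fibers, and near such points our spaces are honestly schemes. Indeed, since $\mathcal{X}$ and $\mathcal{Y}$ are smooth over the regular ring $R$ they are regular, hence normal, and a normal Noetherian separated algebraic space is a scheme in codimension one; thus the non-schematic loci $Z_{\mathcal{X}} := \mathcal{X}\setminus \mathcal{X}^{\mathrm{sch}}$ and $Z_{\mathcal{Y}} := \mathcal{Y}\setminus\mathcal{Y}^{\mathrm{sch}}$ have codimension $\geq 2$. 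In particular their intersections with the special fibers are proper closed subsets of $\mathcal{X}_{s}$ and $\mathcal{Y}_{s}$, so they may be absorbed into the final $V$ and $W$ without harm, and it suffices to produce the extension over the schematic loci.

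First I would form the closure $\Gamma \subset \mathcal{X}\times_{R}\mathcal{Y}$ of the graph of $f$, an integral algebraic space proper over $R$, with its two projections $p\colon \Gamma \to \mathcal{X}$ and $q\colon \Gamma \to \mathcal{Y}$. These are proper and birational, and restrict to isomorphisms on the generic fibers over the loci of definition of $f$ and $f^{-1}$. The goal becomes to show that $p$ and $q$ are isomorphisms over dense open subspaces of $\mathcal{X}_{s}$ and $\mathcal{Y}_{s}$; granting this, one defines $V$ (resp.\ $W$) as the union of $Z_{\mathcal{X}}$ (resp.\ $Z_{\mathcal{Y}}$) with the locus where $p$ (resp.\ $q$) fails to be an isomorphism, checks that $V_{s}\neq \mathcal{X}_{s}$ and $W_{s}\neq \mathcal{Y}_{s}$, and transports $p,q$ to obtain $\widetilde f$ over the common open subspace. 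Since being an isomorphism is local in the \'etale topology and every codimension-one point of $\mathcal{X}_{s}$ and $\mathcal{Y}_{s}$ lies in the schematic locus, this statement is local and reduces to a statement about schemes proper and flat over $R$.

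The conceptual heart, and the step I expect to be the main obstacle, is the part governed by non-ruledness. Writing $\eta_{\mathcal{X}},\eta_{\mathcal{Y}}$ for the generic points of $\mathcal{X}_{s},\mathcal{Y}_{s}$, the local rings $\mathcal{O}_{\mathcal{X},\eta_{\mathcal{X}}}$ and $\mathcal{O}_{\mathcal{Y},\eta_{\mathcal{Y}}}$ are discrete valuation rings of the common function field $L := K(\mathcal{X}_{\eta}) = K(\mathcal{Y}_{\eta})$ (identified through $f$), each extending the valuation of $R$ and with residue field of transcendence degree $\dim\mathcal{Y}_{s}$ over the residue field of $R$. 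The crux is to prove that these two divisorial valuations coincide. Were the center on $\mathcal{X}$ of the valuation attached to $\mathcal{Y}_{s}$ a proper subvariety $C\subsetneq \mathcal{X}_{s}$, a common model dominating both sides would carry a prime divisor mapping birationally onto $\mathcal{Y}_{s}$ while contracting onto $C$, and the images of the positive-dimensional contracted fibers would cover $\mathcal{Y}_{s}$ by rational curves, contradicting the hypothesis that $\mathcal{Y}_{s}$ is not ruled. Making this rigorous is exactly the content of Matsusaka--Mumford, whose specialization-of-cycles proof is characteristic-free --- a point that matters here, since the residue field of $R$ may have positive characteristic. The only genuinely new verification is that running this argument on the scheme neighbourhoods supplied by the schematic loci is unaffected by the ambient algebraic-space structure.

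Once the two valuations agree, $f$ identifies $\mathcal{O}_{\mathcal{Y},\eta_{\mathcal{Y}}}$ with $\mathcal{O}_{\mathcal{X},\eta_{\mathcal{X}}}$ compatibly with the reductions, so it extends to an isomorphism over open neighbourhoods of $\eta_{\mathcal{X}}$ and $\eta_{\mathcal{Y}}$. Spreading this out and intersecting over the finitely many generic points of the special fibers yields $\widetilde f$ on $\mathcal{X}\setminus V$, which finishes the proof.
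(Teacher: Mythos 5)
Your reduction to the scheme case via the codimension-one schematic locus does not reach the heart of the argument, and this is a genuine gap. The fact you invoke (a normal separated Noetherian algebraic space is a scheme in codimension one) is fine, and it does give you honest discrete valuation rings at the generic points of the special fibers. But the crux — your third paragraph — is a contradiction argument in which one assumes that the center $c$ on $\mathcal{X}$ of the divisorial valuation attached to $\mathcal{Y}_{s}$ is \emph{not} the generic point of $\mathcal{X}_{s}$; then $c$ is a point of codimension $\geq 2$ on $\mathcal{X}$, i.e.\ exactly a point that may lie outside the schematic locus, and the classical lemma one must apply there (Zariski--Abhyankar: the residue field of a divisorial valuation centered at a point of codimension $\geq 2$ is a \emph{ruled} extension) is a statement about the local ring $\mathcal{O}_{\mathcal{X},c}$, whose fraction field must be the function field $L$ itself. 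So the step you describe as ``local and reduces to a statement about schemes proper and flat over $R$'' is precisely the step your reduction never touches (note also that the schematic loci are not proper over $R$, so the classical theorem cannot be quoted for them, and that ``codimension-one points of $\mathcal{X}_{s}$'' are codimension-two points of $\mathcal{X}$, which your schematic-locus fact does not cover). The obvious patch --- pass to an \'etale neighbourhood, or to the henselian local ring, at $c$ and apply the scheme case there --- fails for a substantive reason: it replaces $L$ by a finite (resp.\ algebraic) extension, and one then only concludes that a finite extension of $k(\mathcal{Y}_{s})$ is ruled, i.e.\ that $\mathcal{Y}_{s}$ is \emph{uniruled}. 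Ruledness does not descend along finite covers (unirational non-ruled varieties, e.g.\ cubic threefolds, show this), so no contradiction with ``$\mathcal{Y}_{s}$ not ruled'' results.

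Your own sketch of the mechanism has the same defect, independently: ``covering $\mathcal{Y}_{s}$ by rational curves'' is uniruledness, not ruledness, hence does not contradict the hypothesis; moreover the assertion that the contracted fibers are covered by rational curves is itself a characteristic-zero theorem (rational chain-connectedness of fibers of birational contractions of smooth varieties), unavailable here since the residue characteristic is typically $p>0$. This is not pedantry: in the intended application the special fibers have trivial canonical sheaf in characteristic $p$, where uniruled such varieties exist (unirational K3 surfaces), so a version of the theorem whose hypothesis is ``not uniruled'' would be useless, and the paper's definition of ruled ($X'$ birational to $\mathbb{P}^{1}_{k}\times Y$) is the one that must come out of the argument. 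The paper takes a different route that avoids your reduction altogether: it re-runs the classical Matsusaka--Mumford proof directly on the algebraic spaces, using generic points and henselian local rings of \emph{decent} algebraic spaces as substitutes for the scheme-theoretic notions, rather than trying to transport the statement to the schematic locus. As written, your proposal is missing the idea needed to handle the codimension $\geq 2$ center, which is where all the difficulty of the algebraic-space case is concentrated.
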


\begin{proof}
The same proof as in \cite[Theorem 1]{Matsusaka1964} (see also \cite[Theorem 5.4]{Matsumoto2015a}) works, by using the notion of generic point and henselian local rings of decent algebraic spaces. 
\end{proof}
%



%
%

The following lemma is taught to the author by Tetsushi Ito.

\begin{lemmasub}
\label{Lemma:Specialization}
Let $\mathcal{X}$
be an integral proper smooth algebraic space over $R$.
Suppose that the special fiber $\mathcal{X}_{s}$ is not ruled.
Let $\Bir(\mathcal{X}_{\eta})$ be the group of birational automorphisms
of the generic fiber $\mathcal{X}_{\eta}$.
Let $G \subset \Bir(\mathcal{X}_{\eta})$ be a finite subgroup.
\begin{enumerate}
\item There exists a proper closed subspace $V \subset \mathcal{X}$
with $V_s \neq \mathcal{X}_{s}$ and a homomorphism
\[
\psi \colon G \to \Aut(\mathcal{X} \setminus V)
\]
such that $\psi(f)|_{(\mathcal{X}\setminus V)_{\eta}} = f$.

\item Let $\overline{\psi}$ be the composition of the following maps
\[
G \overset{\psi}{\to} \Aut(\mathcal{X} \setminus V)
 \to \Aut((\mathcal{X} \setminus V)_{s})
\]
given by $\overline{\psi}(f) := \psi(f)|_{(\mathcal{X} \setminus V)_{s}}$.
If the characteristic of the residue field of $R$ is $0$,
the map $\overline{\psi}$ is injective.
If the characteristic of the residue field of $R$ is $p > 0$,
the kernel of $\overline{\psi}$ is a $p$-group.
\end{enumerate}
\end{lemmasub}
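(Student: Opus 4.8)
The plan is to use the Matsusaka--Mumford theorem (Theorem~\ref{ClosedSubset}) to regularize the birational $G$-action on a suitable open subspace whose special fiber is still dense, and then to analyze the induced action on the special fiber through completed local rings, where the residue characteristic enters.

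For (1), apply Theorem~\ref{ClosedSubset} with $\mathcal{Y}=\mathcal{X}$ to each $f\in G$ (using that $\mathcal{X}_{s}$ is not ruled): this gives closed subspaces $V_{f},W_{f}\subset\mathcal{X}$ with $(V_{f})_{s}\neq\mathcal{X}_{s}$, $(W_{f})_{s}\neq\mathcal{X}_{s}$ and an isomorphism $\widetilde{f}\colon \mathcal{X}\setminus V_{f}\to\mathcal{X}\setminus W_{f}$ extending $f$. Since $\mathcal{X}$ is reduced and separated, $\widetilde{f}$ is the unique extension of $f$ to a morphism on its domain, so $\widetilde{fg}$ and $\widetilde{f}\circ\widetilde{g}$ agree wherever both are defined. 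Put $Z:=\bigcup_{f\in G}(V_{f}\cup W_{f})$, a closed subspace with $Z_{s}\subsetneq\mathcal{X}_{s}$, and set
\[
U:=\{\,x\in\mathcal{X}\setminus Z : \widetilde{f}(x)\notin Z \text{ for all } f\in G\,\}.
\]
This $U$ is open, and the identity $\widetilde{f}\circ\widetilde{g}=\widetilde{fg}$ shows that $U$ is stable under every $\widetilde{f}$: if $x\in U$ and $g\in G$ then $\widetilde{f}(\widetilde{g}(x))=\widetilde{fg}(x)\notin Z$ for all $f$. Hence $f\mapsto\psi(f):=\widetilde{f}|_{U}$ is a homomorphism $G\to\Aut(U)$ with $\psi(f)|_{U_{\eta}}=f$, and $V:=\mathcal{X}\setminus U$ is the desired model, provided $V_{s}\neq\mathcal{X}_{s}$.

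To verify the latter, note that non-ruledness forces $\mathcal{X}_{s}$ to be integral. On special fibers $\widetilde{f}$ restricts to an isomorphism $(\mathcal{X}\setminus V_{f})_{s}\cong(\mathcal{X}\setminus W_{f})_{s}$ between dense opens of the irreducible $\mathcal{X}_{s}$, so $\{x\in(\mathcal{X}\setminus Z)_{s}:\widetilde{f}(x)\in Z_{s}\}$ is the preimage of the proper closed subset $Z_{s}$ under an isomorphism, hence proper closed in $\mathcal{X}_{s}$. Deleting the finitely many such loci from the dense open $(\mathcal{X}\setminus Z)_{s}$ leaves $U_{s}$ dense, so $V_{s}\neq\mathcal{X}_{s}$, proving (1). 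For (2), fix $f\in\ker\overline{\psi}$, so $\sigma:=\psi(f)$ is an $R$-automorphism of $U$ acting trivially on $U_{s}$. Choose a closed point $x\in U_{s}$; then $\sigma$ fixes $x$ and induces on $A:=\widehat{\mathcal{O}}_{U,x}$ an automorphism $\sigma^{\ast}$ of finite order $N$ with $\sigma^{\ast}(\pi)=\pi$ for a uniformizer $\pi$ of $R$ and $\sigma^{\ast}\equiv\id\pmod{\pi A}$. If $\sigma^{\ast}\neq\id$, let $e\geq 1$ be maximal with $\sigma^{\ast}\equiv\id\pmod{\pi^{e}A}$ and write
\[
\sigma^{\ast}(a)=a+\pi^{e}\widetilde{D}(a)+(\text{higher order in }\pi), \qquad \sigma^{\ast N}(a)\equiv a+N\pi^{e}\widetilde{D}(a)\pmod{\pi^{e+1}A}.
\]
Using that $\sigma^{\ast}$ is a ring map and $2e\geq e+1$, the reduction $D:=\widetilde{D}\bmod\pi$ is a nonzero derivation of $A/\pi A$, and $\sigma^{\ast N}=\id$ yields $N\cdot D=0$ on $A/\pi A$.

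Now conclude: in residue characteristic $0$, $N$ is invertible in $A/\pi A$, so $D=0$, a contradiction; hence $\sigma^{\ast}=\id$, so $\sigma$ and $\id$ agree on $\mathcal{O}_{U,x}$ and thus on a neighborhood of $x$, whence $\sigma=\id$ on the integral $U$ and $f=\id$ on $U_{\eta}$, giving injectivity of $\overline{\psi}$. In residue characteristic $p>0$, $D\neq 0$ forces $p\mid N$; applying this to $f^{p^{a}}$, where $p^{a}$ is the exact power of $p$ dividing $N$, gives an element of order prime to $p$ in $\ker\overline{\psi}$, which must therefore be trivial, so $N=p^{a}$. Thus every element of $\ker\overline{\psi}$ has $p$-power order and $\ker\overline{\psi}$ is a $p$-group. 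The main obstacle is step (1): Matsusaka--Mumford handles each $f$ individually, but producing a \emph{single} $G$-stable open on which the action is genuinely regular while guaranteeing its special fiber stays dense (so that the model $V$ is proper) is the delicate point; once that model is in hand, part (2) reduces to the standard derivation computation on $A$ sketched above.
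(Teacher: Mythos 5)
Your proposal is correct and follows essentially the same route as the paper: part (1) is the same construction (apply Theorem~\ref{ClosedSubset} to each element of $G$, remove the union of the bad loci together with their preimages under the extended isomorphisms, and use the relation $\widetilde{f}\circ\widetilde{g}=\widetilde{fg}$ to get a $G$-stable open with dense special fiber), and part (2) is the same leading-term computation modulo powers of $\pi$, with your ``maximal $e$ plus derivation'' formulation being an equivalent repackaging of the paper's induction on $\oo_{\mathcal{X},x}/\pi^{n}$. The only cosmetic differences are that the paper chooses the point $x$ in the schematic locus of the special fiber (which is how one should make sense of the local ring $\oo_{\mathcal{X},x}$ on an algebraic space, a point you should also address before invoking $\widehat{\oo}_{U,x}$), and that you make explicit the reduction, via $f^{p^{a}}$, from ``elements of order prime to $p$ are trivial'' to ``the kernel is a $p$-group,'' which the paper leaves implicit.
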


\begin{proof}
(1) We put $G = \{ f_1=\mathrm{id},\ldots,f_r \}$.
Let $V_{f_i}, W_{f_{i}} \subset \mathcal{X}_s$ be proper closed subsets
associated with $f_i$ given by Theorem \ref{ClosedSubset} (i.e.\,$f_{i}$ extends to an isomorphism $\widetilde{f}_{i}\colon \mathcal{X} \setminus V_{f_{i}} \simeq \mathcal{X} \setminus W_{f_{i}}$).
Here, we put $V_{f_{1}} = \emptyset$.
We put $V':=\bigcup_{i = 1}^{r} V_{f_i} $,
and 
\[
V:= \bigcup_{i=1}^{r} \widetilde{f}_{i}^{-1}(V').
\] 
First, since $|\widetilde{f}_{i}^{-1}(V') \cup V_{f_{i}}|$ is a closed subset of $|\mathcal{X}|$, the subset $V$ is a closed subspace of $\mathcal{X}$.
Moreover, since we have 
\[
|\widetilde{f}_{i}^{-1}( \widetilde{f}_{j}^{-1}(V'))| \subset
|\widetilde{(f_{j}\circ f_{i})}^{-1}(V') \cup V_{f_{j} \circ f_{i}}|,
\]
we have $\widetilde{f}_{i}^{-1}(V) \subset V$.
Since $V' \subset V$, the morphism $\widetilde{f}_{i}$ restricts to the morphism 
\[
f_{i}' \colon
\mathcal{X} \setminus V \rightarrow \mathcal{X} \setminus V.
\]
Since $(f_{i})' \circ (f_{i}^{-1})' =(f_{i}^{-1})' \circ (f_{i})' = \mathrm{id}$,
$f_{i}'$ is an isomorphism, and $V$ satisfies the desired condition.

(2)
Take an automorphism $f \in \Ker \overline{\psi}$ in the kernel of $\overline{\psi}$.
Let $\ord(f)$ be the order of $f$.
Assume that $\ord(f)$ is invertible in the residue field of $R$.
We shall show $f$ is trivial.

Since $f$ acts trivially on $(\mathcal{X} \setminus V)_{s}$,
it fixes every closed point $x \in (\mathcal{X} \setminus V)_{s}$.
Let $k(s)$ be the residue field of $R$.
We can take a $\overline{k(s)}$-valued point $x$ of $(\mathcal{X} \setminus V)_{s}$, such that $x$ is contained in the schematic locus of $(\mathcal{X} \setminus V)_{s}$.
It is enough to show that the action of $f$ on the local ring $\oo_{\mathcal{X},x}$ is trivial. 
Let $\pi$ be a uniformizer of $R$.
Since $\pi$ is not invertible in $\oo_{\mathcal{X}, x}$,
we have $\bigcap_{n \geq 1} \pi^n \oo_{\mathcal{X}, x} = (0)$ by Krull's intersection theorem; see \cite[Theorem 8.10]{Matsumura1989}.
Therefore, it is enough to show that
the action of $f$ on 
$\oo_{\mathcal{X}, x}/\pi^n \oo_{\mathcal{X}, x}$
is trivial for every $n \geq 1$.
We shall prove the assertion by induction on $n$.
We put $A_n := \oo_{\mathcal{X}, x}/\pi^{n} \oo_{\mathcal{X}, x}$.
When $n = 1$, it follows because $f$ acts trivially on
the special fiber $(\mathcal{X} \setminus V)_{s}$.
If the assertion holds for some $n \geq 1$, the exact sequence
\[
0 \to \pi^n A_{n+1}
  \to A_{n+1}
  \to A_{n}
  \to 0
\]
shows that, for an element $a \in A_{n+1}$,
we have $f^{\ast}(a) = a + \pi^n b$ for some $b \in A_{n+1}$
(here the action of $f$ on $A_{n+1}$ is denoted by $f^{\ast}$).
Since $\pi^{n+1} = 0$ in $A_{n+1}$,
we have $(f^{\ast})^r(a) = a + r \pi^n b$ for every $r \geq 1$.
In particular, we have
\[ (f^{\ast})^{\ord(f)}(a) = a + \ord(f) \pi^n b = a. \]
Since $\pi^n A_{n+1}$ is a vector space over $k(x)$
and $\ord(f)$ is invertible in the residue field,
we have $\pi^n b = 0$.
Thus, the action of $f$ on $A_{n+1}$ is trivial.
By induction on $n$, the assertion is proved.
\end{proof}

\begin{remarksub}
If $\Aut(\mathcal{X}_{\eta})$ is infinite,
a homomorphism
\[
\Aut(\mathcal{X}_{\eta}) \to \Aut(\mathcal{X} \setminus V)
\]
may not exist for any choice of $V \subset \mathcal{X}_s$.
The problem is there may be infinitely many closed subsets
that should be removed from $\Aut(\mathcal{X})$
if we apply Theorem \ref{ClosedSubset}.
On the other hand, there is a well-defined specialization homomorphism
\[ \Bir(\mathcal{X}_{\eta}) \to \Bir(\mathcal{X}_s), \]
where $\Bir(\mathcal{X}_{\eta})$, $\Bir(\mathcal{X}_s)$
are the groups of birational automorphisms.
See \cite[p.\ 191-192, Exercises 1.17]{Kollar1996} for details.
\end{remarksub}

\begin{remarksub}
There does not exist a specialization map
\[ \Aut(\mathcal{X}_{\eta}) \to \Aut(\mathcal{X}_s) \]
in general.
The problem is that an automorphism of the generic fiber
does not always extend to an automorphism of the integral model.
Such an extension exists for abelian varieties (cf.\,\cite[Theorem 1.4.3]{Bosch1990}).
However, for K3 surfaces, an automorphism does not extend to an automorphism of integral models in general (cf.\,\cite[Example 5.4]{Liedtke2018}). 
Note that, since a birational map between K3 surfaces can be extended to an automorphism, there exists a map $\Aut(\mathcal{X}_{\eta}) \to \Aut(\mathcal{X}_s)$ still in this case.
On the other hand, if we suppose that there exists a polarization on $\mathcal{X}$ (this does not hold in general, see \cite[Example 5.2]{Matsumoto2015a}),
there always exists a specialization map
\[
\Aut(\mathcal{X}_{\eta}, \mathcal{L}_{\mathcal{X}_{\eta}}) \to
\Aut(\mathcal{X}_{s}, \mathcal{L}_{\mathcal{X}_{s}}).
\]
(Need to assume the special fiber is non-ruled.)
This map is used in Andr\'e's proof of the Shafarevich
conjecture for (very) polarized irreducible symplectic varieties;
see the proof of \cite[Lemma 9.3.1]{Andre1996}.
\end{remarksub}

\section{Finiteness of twists admitting essentially good reduction}
\label{sectiontwists}
%


In this section, we prove the Shafarevich conjecture for polarized irreducible symplectic varieties partially, i.e.\,we will prove the finiteness of twists admitting smooth reduction, by using results in Section \ref{sectionMatsusakaMumford}.
First, we introduce the notion of essentially good reduction of irreducible symplectic varieties, which generalizes the notion of good reduction.

\begin{definitionsub}
Let $R$ be a normal domain with fraction field $F$ of characteristic $0$.
Let $X$ be an irreducible symplectic variety over $F$.
Let $\p$ be a height $1$ prime ideal of $\Spec R$.
We say $X$ is essentially good at $\p$ if the following holds.
There exists a finite \'{e}tale extension $S$ of the completion $\widehat{R}_{\p}$ such that there exists a smooth proper algebraic space $\mathcal{Y}$ over $S$ whose generic fiber $\mathcal{Y}_{\Frac(S)}$ is an irreducible symplectic variety which is birational to $X_{\Frac(S)}$.
\label{essentially good}
\end{definitionsub}

\begin{remarksub}
There is an example of K3 surfaces over $\Q_{p}$ which does not admit good reduction, but admits good reduction after a finite unramified extension of $\Q_{p}$ (see \cite{Liedtke2018}).
\end{remarksub}

\begin{lemmasub}
\label{ruled}
Let $R$ be a discrete valuation ring with $K= \Frac R$.
Let $k(s)$ be the residue field of $R$.
Let $\mathcal{X} \rightarrow \Spec R$ be a smooth proper algebraic space such that the generic fiber $\mathcal{X}_{\eta}$ is an irreducible symplectic variety over $K$.
Then the special fiber $\mathcal{X}_{s}$ is a non-ruled algebraic space over $k(s)$.
\end{lemmasub}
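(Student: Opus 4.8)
The plan is to reduce the statement to two facts: first, that the relative canonical bundle $\omega_{\mathcal{X}/R}$ is trivial, so that the special fiber $\mathcal{X}_{s}$ carries a nowhere-vanishing global $2n$-form; and second, that a smooth variety with trivial canonical bundle cannot be ruled. Both steps should be arranged to be insensitive to the characteristic of the residue field, which matters here since $k(s)$ may have positive characteristic (the model $\mathcal{X}$ lives over a possibly mixed-characteristic $R$).

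First I would record that both $\mathcal{X}$ and $\mathcal{X}_{s}$ are integral. As $\mathcal{X}_{\eta}$ is an irreducible symplectic variety it is geometrically connected, so by properness and flatness of $\pi\colon \mathcal{X}\to\Spec R$ one gets $\pi_{\ast}\oo_{\mathcal{X}}=R$, whence $\mathcal{X}_{s}$ is geometrically connected; being smooth it is then integral, and by flatness $\mathcal{X}$ is integral as well (every irreducible component dominates $\Spec R$ and meets the irreducible generic fiber). Let $\omega$ be the symplectic form on $\mathcal{X}_{\eta}$ and put $s:=\omega^{\wedge n}$, a nowhere-vanishing global section of $\omega_{\mathcal{X}_{\eta}/K}$. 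Regarding $s$ as a rational section of the line bundle $\omega_{\mathcal{X}/R}$ on the regular (hence $\Pic=\mathrm{Cl}$) algebraic space $\mathcal{X}$, its divisor $\mathrm{div}(s)$ has no horizontal component and is supported on the unique vertical prime divisor $\mathcal{X}_{s}$, so $\mathrm{div}(s)=m\,[\mathcal{X}_{s}]$ for some $m\in\Z$. Since $\mathcal{X}_{s}=V(\pi)$ is the principal divisor of a uniformizer $\pi$, the bundle $\omega_{\mathcal{X}/R}\cong\oo_{\mathcal{X}}(m[\mathcal{X}_{s}])$ is trivial; as the relative canonical bundle commutes with base change for smooth morphisms, restricting gives $\omega_{\mathcal{X}_{s}}\cong\oo_{\mathcal{X}_{s}}$.

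For the second step, suppose for contradiction that $\mathcal{X}_{s}$ is ruled, so that its schematic locus $\mathcal{X}_{s}'$ is birational to $\PP^{1}_{k(s)}\times Y$; in particular $\mathcal{X}_{s}'$ is separably uniruled. I would then produce a complete \emph{free} rational curve $g\colon \PP^{1}\to\mathcal{X}_{s}'$. Taking the proper transforms in $\mathcal{X}_{s}$ of the fibers $\PP^{1}\times\{y\}$ yields a covering family of rational curves; since a separated algebraic space is a scheme away from a closed subset of codimension $\geq 2$, a general member of this family avoids the non-schematic locus and hence lies in $\mathcal{X}_{s}'$. By the standard theory of rational curves on smooth varieties, valid in all characteristics for separably uniruled varieties, the general such curve is free, so $g^{\ast}T_{\mathcal{X}_{s}'}$ is globally generated and contains the image of $T_{\PP^{1}}=\oo_{\PP^{1}}(2)$; therefore $\deg g^{\ast}T_{\mathcal{X}_{s}'}>0$ and $\deg g^{\ast}\omega_{\mathcal{X}_{s}'}<0$. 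This contradicts $\omega_{\mathcal{X}_{s}'}\cong\oo$ coming from Step~1, which forces $\deg g^{\ast}\omega_{\mathcal{X}_{s}'}=0$.

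The main obstacle is the passage from schemes to algebraic spaces in possibly mixed characteristic. Concretely, I must ensure that the divisor-theoretic computation of Step~1 is legitimate on a regular algebraic space, and that the uniruledness argument of Step~2 runs on the non-proper schematic locus $\mathcal{X}_{s}'$ rather than on $\mathcal{X}_{s}$ itself. The codimension-$\geq 2$ bound on the non-schematic locus is exactly what lets me move the covering rational curves into $\mathcal{X}_{s}'$; combining this with the characteristic-free existence of free rational curves in separably uniruled varieties avoids any appeal to resolution of singularities, which would be unavailable when $k(s)$ has positive characteristic.
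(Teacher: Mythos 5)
Your Step 1 (triviality of $\omega_{\mathcal{X}/R}$, hence of $\omega_{\mathcal{X}_{s}}$) is correct and is exactly the paper's first step, with more detail supplied. The gap is in Step 2, at the sentence claiming that a general member of your covering family ``avoids the non-schematic locus and hence lies in $\mathcal{X}_{s}'$''. A general member of a covering family of rational curves need \emph{not} avoid a fixed closed subset of codimension $\geq 2$: that avoidance property is a property of \emph{free} curves, i.e.\ of exactly what you are trying to produce, so the argument is circular at this point. Concretely, your family consists of proper transforms of the fibers $\PP^{1}\times\{y\}$ under a birational map, and such curves can all be forced through a fixed codimension-$2$ set: under the birational map $\PP^{1}\times\PP^{1}\dashrightarrow \PP^{2}$ (projection of a quadric from a point $q$), the proper transforms of one ruling are precisely the lines through a fixed point $p\in\PP^{2}$, every one of which meets the codimension-$2$ set $\{p\}$; crossing with any smooth projective $V$ gives the same phenomenon in any dimension. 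Since you cannot guarantee that the general curve lies in $\mathcal{X}_{s}'$, the freeness theorem you invoke (which in any case is a statement about smooth \emph{proper} varieties, not about the non-proper open subscheme $\mathcal{X}_{s}'$) never gets off the ground, and the contradiction with $\omega_{\mathcal{X}_{s}'}\cong\oo$ is not reached. A secondary issue: the assertion that the non-schematic locus of a separated algebraic space has codimension $\geq 2$ is true for separated Noetherian spaces, but it is itself a nontrivial foundational result that needs a citation, not a formal consequence of separatedness.

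For comparison, the paper's proof avoids rational curves altogether. It applies Chow's lemma for algebraic spaces and elimination of indeterminacy to obtain a normal projective scheme $W$ with projective birational morphisms $f\colon W\to \mathcal{X}_{s}$ and $g\colon W\to \PP^{1}_{k(s)}\times V$, and then pushes forward a canonical divisor $K_{W}$: since $\mathcal{X}_{s}$ is smooth (\'etale-locally terminal), $f_{\ast}\oo_{W}(K_{W})\simeq \oo_{\mathcal{X}_{s}}(K_{\mathcal{X}_{s}})\simeq \oo_{\mathcal{X}_{s}}$, so $H^{0}(W,\oo_{W}(K_{W}))\neq 0$; on the other hand $g_{\ast}\oo_{W}(K_{W})\hookrightarrow \oo_{\PP^{1}\times V}(K_{\PP^{1}\times V})$, so the ruled variety $\PP^{1}\times V$ would carry a nonzero global canonical section, a contradiction. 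This is the classical birational invariance of the geometric genus; it is characteristic-free and sidesteps both the properness issue and the schematic-locus issue entirely. If you want to salvage your route, you would have to first establish the existence of a free rational curve on the proper algebraic space $\mathcal{X}_{s}$ itself (requiring Hom-space and deformation theory for proper algebraic spaces in positive characteristic) and only afterwards use freeness to move the curve off the non-schematic locus: avoidance must come after freeness, not before.
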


\begin{proof} 
The canonical sheaf
\[
\omega_{\mathcal{X}/R} := \bigwedge \Omega_{\mathcal{X}/R}^{1}
\]
is trivial, since the special fiber $\mathcal{X}_{s}$ is a principal Cartier divisor of $\mathcal{X}$.
Therefore, the special fiber $\mathcal{X}_{s}$ is a smooth proper algebraic space over $k(s)$ with a trivial canonical sheaf.
Suppose that $\mathcal{X}_{s}$ is ruled.
Take a normal projective variety $V$ over $k(s)$ such that $\PP^{1}_{k(s)} \times V$ is birationally equivalent to $\mathcal{X}_{s}$.
By using Chow's lemma and taking elimination of indeterminacy,
there exists a projective normal scheme $W$ over $k(s)$ with projective birational morphisms $f\colon W \rightarrow \mathcal{X}_{s}$ and $g\colon W \rightarrow \PP^{1}_{k(s)} \times V$.
Take a canonical (Weil)-divisor $K_{W}$ on $W$.
We define a Weil divisor $K_{\mathcal{X}_{s}}$ as the direct image $f_{\ast} (K_{W})$.
We also put $K_{\PP^{1}_{k(s)} \times V} :=g _{\ast} (K_{W}).$
By the \'{e}tale descent, we have $\oo(K_{\mathcal{X}_{s}}) \simeq \omega_{\mathcal{X}_{s}/k(s)}$.
Moreover, we have a natural isomorphism $f_{\ast}(\oo_{W}(K_{W})) \simeq \oo_{\mathcal{X}_{s}}(K_{\mathcal{X}_{s}})$, since $\mathcal{X}_{s}$ has terminal singularity \'{e}tale locally.
On the other hand, we have a natural inclusion
$g_{\ast} (\oo_{W}(K_{W})) \hookrightarrow \oo_{\PP^{1}_{k(s)} \times V} (K_{\PP_{1} \times V}).$
Thus $\Gamma(\PP^{1}_{k(s) \times V},\oo_{\PP^{1}_{k(s)} \times V}(K_{\PP^{1}_{k(s)} \times V})) \neq 0$, and this is contradiction.
\end{proof}

\begin{lemmasub}
\label{Lemma:Unramifiedness}
Let $R$ be a henselian discrete valuation ring with $K = \Frac R$.
Let $(X, \mathscr{L})$, $(Y,\polM)$ be polarized irreducible symplectic varieties over $K$.

Let
$f \colon (X_{\overline{K}}, \mathscr{L}_{\overline{K}})
\overset{\simeq}{\to}
(Y_{\overline{K}}, \polM_{\overline{K}})$
be an isomorphism of polarized varieties over $\overline{K}$
with associated 1-cocycle
\[ \alpha_{f} \colon \Gal(\overline{K}/K) \to \mathscr{G} := \Aut(X_{\overline{K}}, \mathscr{L}_{\overline{K}}) \]
defined by $\alpha_{f}(\sigma) = (f^{\sigma})^{-1} \circ f$.

Assume that
\begin{itemize}
\item the order of $\mathscr{G}$ is invertible in the residue field of $R$,
\item there exist proper smooth algebraic spaces
$\mathcal{X}', \mathcal{Y}'$ over $R$
whose generic fibers are irreducible symplectic varieties
such that there are birational maps
$X \dasharrow \mathcal{X}'_{\eta}$,
$Y \dasharrow \mathcal{Y}'_{\eta}$ over $K$.
\end{itemize}
Then the restriction of $\alpha_{f}$ to the inertia subgroup
$I_K \subset \Gal(\overline{K}/K)$
is trivial.
\end{lemmasub}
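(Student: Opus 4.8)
The plan is to prove that $\alpha_{f}(\sigma) = \id$ for every $\sigma \in I_K$, which is exactly the asserted triviality. Since $R$ is henselian, $I_K = \Gal(\overline{K}/K^{\nr})$ and the valuation ring $R^{\sh}$ of $K^{\nr}$ is again a henselian discrete valuation ring, now with \emph{separably closed} residue field. The restriction $\alpha_{f}|_{I_K}$ is precisely the cocycle attached to $f$ regarded over $K^{\nr}$, so I may replace $K$ by $K^{\nr}$ and $R$ by $R^{\sh}$; thus it suffices to show that $\alpha_{f}\colon \Gal(\overline{K}/K) \to \mathscr{G}$ is identically $\id$ when the residue field $\kappa$ of $R$ is separably closed. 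As $\mathscr{G}$ is finite, I fix a finite Galois extension $L/K$ over which $f$ and every element of $\mathscr{G}$ are defined; then $\alpha_{f}$ is inflated from $\Gal(L/K)$, and it is enough to treat $\sigma \in \Gal(L/K)$. Let $R_{L}$ be the valuation ring of $L$, a henselian discrete valuation ring with residue field $\kappa_{L}$, and put $\mathcal{X}'_{L} := \mathcal{X}' \otimes_{R} R_{L}$ and $\mathcal{Y}'_{L} := \mathcal{Y}' \otimes_{R} R_{L}$; being base changes of smooth proper spaces with geometrically integral (irreducible symplectic) generic fibers, these are integral, proper and smooth over $R_{L}$, with irreducible symplectic generic fibers $\mathcal{X}'_{L,\eta}, \mathcal{Y}'_{L,\eta}$.

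Next I set up the specialization machinery. Via the birational map $X_{L} \dashrightarrow \mathcal{X}'_{L,\eta}$ I regard $\mathscr{G} = \Aut(X_{\overline{K}},\mathscr{L}_{\overline{K}})$ as a finite subgroup of $\Bir(\mathcal{X}'_{L,\eta})$. Since $(\mathcal{X}'_{L})_{s}$ is non-ruled by Lemma \ref{ruled}, Lemma \ref{Lemma:Specialization} provides a proper closed subspace $V$ and a specialization homomorphism $\overline{\psi}\colon \mathscr{G} \to \Aut(((\mathcal{X}'_{L})\setminus V)_{s})$ whose kernel is a $p$-group (and trivial if $\chara\kappa = 0$), where $p = \chara\kappa$. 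Because the special fibers $(\mathcal{X}'_{L})_{s}$ and $(\mathcal{Y}'_{L})_{s}$ are non-ruled, Theorem \ref{ClosedSubset} lets me specialize $f$ to a birational map $\overline{f}\colon (\mathcal{X}'_{L})_{s} \dashrightarrow (\mathcal{Y}'_{L})_{s}$; this specialization of birational maps is multiplicative, and on $\mathscr{G}$ it agrees with $\overline{\psi}$ (both send $g$ to the restriction to the special fiber of its extension over a dense open). Hence it suffices to show that the specialization of $\alpha_{f}(\sigma)$ is the identity.

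The crux is the behaviour of $\sigma$ on the special fibers. Since $\kappa$ is separably closed and $\kappa_{L}/\kappa$ is finite, $\kappa_{L}/\kappa$ is purely inseparable; such an extension has no non-trivial automorphisms, so the automorphism of $\kappa_{L}$ induced by $\sigma \in \Gal(L/K)$ is the identity. Consequently $\sigma$, which acts $R$-semilinearly on $\mathcal{X}'_{L}$ and $\mathcal{Y}'_{L}$, induces the identity on the special fibers $(\mathcal{X}'_{L})_{s} = (\mathcal{X}')_{s}\otimes_{\kappa}\kappa_{L}$ and $(\mathcal{Y}'_{L})_{s}$. By functoriality of specialization under the semilinear automorphism $\sigma$ of the base I get $\overline{f^{\sigma}} = \overline{f}$, so that
\[
\spp(\alpha_{f}(\sigma)) = (\overline{f^{\sigma}})^{-1}\circ \overline{f} = (\overline{f})^{-1}\circ \overline{f} = \id .
\]
Thus $\alpha_{f}(\sigma) \in \Ker\overline{\psi}$. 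But $\ord(\alpha_{f}(\sigma))$ divides $|\mathscr{G}|$, which is invertible in $\kappa$ by hypothesis and hence prime to $p$, so a non-trivial element of the $p$-group $\Ker\overline{\psi}$ is impossible; therefore $\alpha_{f}(\sigma) = \id$. As $\sigma \in I_K$ was arbitrary, $\alpha_{f}|_{I_K}$ is trivial.

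I expect the main obstacle to be the geometric input of the third paragraph: verifying that $\sigma$ genuinely acts as the identity on the special fibers (the purely-inseparable residue-field argument) and that specialization is functorial for the semilinear $\sigma$-action, giving $\overline{f^{\sigma}} = \overline{f}$. A secondary point needing care is the compatibility of the two specialization maps --- the multiplicative specialization of birational maps coming from Theorem \ref{ClosedSubset} and the homomorphism $\overline{\psi}$ of Lemma \ref{Lemma:Specialization} --- together with the bookkeeping that $\mathcal{X}'_{L}$ and $\mathcal{Y}'_{L}$ remain integral, smooth and proper with non-ruled special fibers after base change to $R_{L}$, so that both results apply.
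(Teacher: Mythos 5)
Your proposal is correct and follows essentially the same route as the paper: pass to a finite Galois extension $L/K$, view $\mathscr{G}$ inside $\Bir$ of the model, apply Lemma \ref{ruled}, Theorem \ref{ClosedSubset} and Lemma \ref{Lemma:Specialization} to get the specialization homomorphism $\overline{\psi}$, use triviality of the residue-field action of inertia to place $\alpha_f(\sigma)$ in $\Ker\overline{\psi}$, and conclude from invertibility of $|\mathscr{G}|$. The only cosmetic difference is that you first reduce to the strict henselization (separably closed residue field, purely inseparable $\kappa_L/\kappa$) where the paper simply uses that $\sigma \in I_{L/K}$ acts trivially on the residue field of $\oo_L$ by definition of the inertia group.
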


\begin{proof}
Take a finite Galois extension $L/K$ such that
$\mathscr{G} = \Aut(X_{L}, \mathscr{L}_{L})$ and
$f$ comes from an isomorphism
$f_L \colon (X_L, \mathscr{L}_{L})
\overset{\simeq}{\to}
(Y_L, \polM_{L})$ over $L$.
The 1-cocycle $\alpha_f$ comes from a $1$-cocycle
$\alpha_{f,L} \colon \Gal(L/K) \to \mathscr{G}$
corresponding to $f_L$.
Let $I_{L/K} \subset \Gal(L/K)$ be the inertia group.
For an element $\sigma \in I_{L/K}$, we have
$\alpha_{f,L}(\sigma) := (f_L^{\sigma})^{-1} \circ f_L$.
We shall show that $\alpha_{f,L}(\sigma) = \id$.

Let $\mathcal{X}'_{\oo_L,s}$, $\mathcal{Y}'_{\oo_L,s}$
be the special fibers of
$\mathcal{X}'_{\oo_L} := \mathcal{X}' \otimes_{\oo_K} \oo_L$,
$\mathcal{Y}'_{\oo_L} := \mathcal{Y}' \otimes_{\oo_K} \oo_L$,
respectively.
We note that the isomorphism $f_L \colon X_L \overset{\simeq}{\to} Y_L$
can be considered as a birational map
$f_L \colon \mathcal{X}'_{L} \overset{\simeq}{\dashrightarrow} \mathcal{Y}'_{L}$.
Here, we put $\mathcal{X}'_{L} := \mathcal{X}' \otimes_{\oo_K} L$ and
$\mathcal{Y}'_{L} := \mathcal{Y}' \otimes_{\oo_K} L$.

By Theorem \ref{ClosedSubset} and Lemma \ref{ruled},
the birational map $f_L$
extends to an isomorphism
\[
  \widetilde{f}_L \colon
  (\mathcal{X}'_{\oo_L} \setminus V) \overset{\simeq}{\to}
  (\mathcal{Y}'_{\oo_L} \setminus W)
\]
over $\oo_L$
for some proper closed subsets
$V \subset \mathcal{X}'_{\oo_L}$, $W \subset \mathcal{Y}'_{\oo_L}$
satisfying
$V_s \neq \mathcal{X}'_{\oo_L,s}$, $W_s \neq \mathcal{Y}'_{\oo_L,s}$.
Enlarging $V$ and $W$ if necessary, by Lemma \ref{Lemma:Specialization},
we may assume the following hold$\colon$
The action of
$\mathscr{G} = \Aut(X_{L}, \mathscr{L}_{L})$ on $X_L$
extends to the homomorphism
\[
  \psi \colon \mathscr{G} \to \Aut(\mathcal{X}'_{\oo_L} \setminus V).
\]

Since $\sigma \in I_{L/K}$ acts trivially on the residue field of $\oo_L$,
the isomorphism
\[
  \widetilde{f}_L \colon
  (\mathcal{X}'_{\oo_L} \setminus V) \overset{\simeq}{\to}
  (\mathcal{Y}'_{\oo_L} \setminus W)
  \quad \text{and} \quad
  \widetilde{f}^{\sigma}_L \colon
  (\mathcal{X}'_{\oo_L} \setminus \sigma(V)) \overset{\simeq}{\to}
  (\mathcal{Y}'_{\oo_L} \setminus \sigma(W))
\]
induce the same morphism on the special fiber.
Thus, the element
\[
 \alpha_{f,L}(\sigma) = (f_L^{\sigma})^{-1} \circ f_L \in \mathcal{G} \subset \Bir (\mathcal{X}_{L})
 \]
sits in the kernel of the specialization map
\[
  \overline{\psi} \colon \mathscr{G} \to \Aut((\mathcal{X}'_{\oo_L} \setminus V)_s)
\]
Since the order of $\mathscr{G}$ is invertible
in the residue field of $R$,
the specialization map
$\mathscr{G} \to \Aut((\mathcal{X}'_{\oo_{L}} \setminus V)_{s})$
is injective by Lemma \ref{Lemma:Specialization} (2).
Therefore,  we have $\alpha_{f,L}(\sigma) = \id$.
\end{proof}

The idea of the proof of the following Proposition was taught to the author by Tetsushi Ito.

\begin{propsub}
\label{Proposition:FinitenessTwists2}
Let $F$ be a finitely generated field over $\Q$, and $R$ a finite type algebra over $\Z$ which is a normal domain with fraction field $F$.
Let $X$ be an irreducible symplectic variety over $F$ which admits essentially good reduction at any height $1$ prime $\p \in \Spec R$.
Let $\polL$ be an ample line bundle on $X$.
Then there exist only finitely many isomorphism classes of
polarized varieties $(Y,\polM)$ over $F$
satisfying the following conditions$\colon$
\begin{itemize}
\item 
$Y$ admits essentially good reduction at any height $1$ prime $\p \in \Spec R$.
\item There exists an isomorphism of polarized varieties
$(X_{\overline{F}},\polL_{\overline{F}}) \simeq (Y_{\overline{F}},\polM_{\overline{F}})$
over $\overline{F}$.
\end{itemize}
\end{propsub}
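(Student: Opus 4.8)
The plan is to realize these twists as classes in a finite Galois cohomology set and to prove that the subset corresponding to twists with essentially good reduction everywhere is finite, by combining the unramifiedness statement of Lemma \ref{Lemma:Unramifiedness} with a Hermite--Minkowski type finiteness for finite \'etale covers of $\Spec R$. First I would set $\mathscr{G} := \Aut(X_{\overline{F}}, \polL_{\overline{F}})$. Since $X$ is irreducible symplectic we have $H^{0}(X, T_{X}) = 0$, so $\Aut^{0}(X)$ is trivial, and as $\polL$ is ample the group $\mathscr{G}$ is finite (it embeds into the isometries of $H^{2}$ fixing the polarization class). By Galois descent, the isomorphism classes of polarized varieties $(Y,\polM)$ over $F$ admitting an isomorphism $(X_{\overline{F}},\polL_{\overline{F}}) \simeq (Y_{\overline{F}},\polM_{\overline{F}})$ are classified by the pointed set $H^{1}(\Gal(\overline{F}/F), \mathscr{G})$, where $(Y,\polM)$ corresponds to the class of the cocycle $\sigma \mapsto (f^{\sigma})^{-1}\circ f$ of a chosen geometric isomorphism $f$. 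It therefore suffices to prove that the classes coming from $(Y,\polM)$ with essentially good reduction at every height $1$ prime form a finite set.

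Next I would reduce to the case where $|\mathscr{G}|$ is invertible on the base. Inverting $|\mathscr{G}|$ deletes only finitely many height $1$ primes, so the set of twists good at every height $1$ prime of $R$ is contained in the set of twists good at every height $1$ prime of $R[1/|\mathscr{G}|]$; hence it is enough to prove finiteness after replacing $R$ by $R[1/|\mathscr{G}|]$ (the hypothesis on $X$ is preserved, as it concerns a subset of the original primes). After this replacement, the residue characteristic at every height $1$ prime is prime to $|\mathscr{G}|$, and, shrinking $R$ further if necessary, $\underline{\Aut}(X,\polL)$ spreads out to a finite \'etale group scheme over $\Spec R$.

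The key step is to show that the class of any counted twist $(Y,\polM)$ is unramified at every height $1$ prime $\p$. Fix such a $\p$ and pass to the completion $K_{\p} := \Frac(\widehat{R}_{\p})$, whose valuation ring is a henselian DVR with residue characteristic prime to $|\mathscr{G}|$. By hypothesis both $X$ and $Y$ admit essentially good reduction at $\p$, so there is a common finite \emph{unramified} extension $L/K_{\p}$ over whose valuation ring $\oo_{L}$ both $X_{L}$ and $Y_{L}$ are birational to generic fibres of proper smooth algebraic spaces. Applying Lemma \ref{Lemma:Unramifiedness} over $\oo_{L}$ (which packages Theorem \ref{ClosedSubset} and Lemma \ref{Lemma:Specialization}) shows that the restriction of the cocycle to the inertia subgroup $I_{L}$ is trivial; since $L/K_{\p}$ is unramified we have $I_{L} = I_{K_{\p}}$, so the class is unramified at $\p$.

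Finally, a class in $H^{1}(\Gal(\overline{F}/F), \mathscr{G})$ unramified at every height $1$ prime of the normal ring $R$ defines, by normalizing $\Spec R$ in the corresponding function-field torsor and invoking Zariski--Nagata purity over the regular locus (whose complement has codimension $\geq 2$, as $R$ is normal), a finite \'etale $\underline{\Aut}(X,\polL)$-torsor over an open $U \subset \Spec R$ of codimension $\geq 2$ complement, i.e.\ a finite \'etale cover of $U$ of degree at most $|\mathscr{G}|$. Since $R$ is of finite type over $\Z$, there are only finitely many finite \'etale covers of $U$ of bounded degree, so only finitely many classes occur, which proves the proposition. The main obstacle is the third step: correctly matching the definition of essentially good reduction --- a finite \emph{unramified} extension together with \emph{birational} (not isomorphic) smooth models --- to the hypotheses of Lemma \ref{Lemma:Unramifiedness}, while keeping track that an unramified base change leaves the inertia group unchanged; the globalization in the last step via purity is more routine but requires the normality-in-codimension-one input.
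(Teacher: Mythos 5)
Your proposal is correct, and it rests on the same three pillars as the paper's own proof: the local rigidity statement (Lemma \ref{Lemma:Unramifiedness}, fed by essentially good reduction at each height $1$ prime), Zariski--Nagata purity, and the Hermite--Minkowski-type finiteness of Harada--Hiranouchi for \'{e}tale covers of arithmetic schemes. The difference is only in how the non-abelian descent bookkeeping is organized. The paper first restricts the cocycle to $\Gal(\overline{F}/L)$ for a finite Galois extension $L/F$ splitting $\mathscr{G}=\Aut(X_{\overline{F}},\polL_{\overline{F}})$, so that it becomes a genuine homomorphism; its kernel cuts out an extension of $L$ of degree at most $|\mathscr{G}|$ unramified over the (shrunk, regular) normalization $R'$ of $R$ in $L$, so by purity the homomorphism factors through a finite quotient of $\pi_{1}(\Spec R')$ that is independent of $(Y,\polM)$, and finiteness of cocycles follows. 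You instead stay with the class in the non-abelian set $H^{1}(\Gal(\overline{F}/F),\mathscr{G})$, spread $\underline{\Aut}(X,\polL)$ out to a finite \'{e}tale group scheme, and extend the associated torsor across the regular locus $U\subset \Spec R$ by purity of the branch locus; this avoids the auxiliary global field $L$, and your local step is in fact more explicit than the paper's (the common unramified extension $L/K_{\p}$ dominating both models and the equality $I_{L}=I_{K_{\p}}$, which the paper uses silently when it applies Lemma \ref{Lemma:Unramifiedness} over the finite \'{e}tale extension $S$ of $\widehat{R'}_{\p}$). The one point you should tighten is the final count: finitely many finite \'{e}tale covers of $U$ of degree at most $|\mathscr{G}|$ does not by itself give finitely many cohomology classes, because non-isomorphic torsors can have isomorphic underlying covers. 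The repair is routine: a fixed finite \'{e}tale $V\to U$ carries only finitely many $\underline{\Aut}(X,\polL)$-torsor structures, since morphisms of finite \'{e}tale $U$-schemes correspond to maps of finite $\pi_{1}(U)$-sets and are therefore finite in number; hence the map from classes to covers is finite-to-one and the desired finiteness follows.
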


\begin{proof}
We put $\mathscr{G} := \Aut(X_{\overline{F}}, \mathscr{L}_{\overline{F}})$.
Shrinking $\Spec R$ if necessary, we may assume that the order of $\mathscr{G}$ is invertible
in $R$.
Note that $\mathscr{G}$ can be considered as a finite subgroup of
$\Bir(X_{\overline{F}})$.

We take a finite Galois extension $L/F$ such that $\Aut(X_{L},\polL_{L}) = \Aut (X_{\overline{F}}, \polL_{\overline{F}})$.
Let $f \colon (X_{\overline{F}}, \polL_{\overline{F}})
\simeq
(Y_{\overline{F}}, \polM_{\overline{F}})$
be any isomorphism of polarized varieties over $\overline{F}$, where $(Y, \polM)$ is any polarized variety satisfying the properties in the statement.
The associated $1$-cocycle
$\alpha_{f} \colon \Gal(\overline{F}/F) \to \mathscr{G}$
is defined by
$\alpha_{f}(\sigma) := (f^{\sigma})^{-1} \circ f$.
It is enough to show that
there are only finitely many possibilities for the $1$-cocycle $\alpha_f$.

Note that $\alpha_{f}|_{\Gal(\overline{F}/L)} \colon \Gal(\overline{F}/L) \rightarrow \mathscr{G}$ is a group homomorphism.
Therefore, the field extension $M/L$ corresponding to the kernel of $\alpha_{f}|_{\Gal(\overline{F}/L)}$ satisfies
$[M:L] < |\mathscr{G}|$.
Let $R'$ be a normalization of $R$ in $L$, and $\p$ a height $1$ prime ideal of $R'$.
By the assumption, there exists a finite \'{e}tale extension $\widehat{R'}_{\p} \subset S$ such that
there exist smooth proper algebraic spaces over $S$ whose generic fibers are irreducible symplectic varieties which are birationally equivalent to $X_{\Frac S}$ and $Y_{\Frac S}$ respectively.
Therefore, by Lemma \ref{Lemma:Unramifiedness}, the inertia subgroup $I_{\p} \subset \Gal(\overline{F}/L)$ (which is defined by fixing the extension of valuation $\p$ to $\overline{F}$) is contained in the kernel of $\alpha_{f}|_{\Gal(\overline{F}/L)}$.
We may assume that $R'$ is regular by shrinking $\Spec R$ if necessarily.
By Zariski--Nagata's purity, $\alpha_{f}|_{\Gal(\overline{F}/L)}$ factors through $\pi_{1}(\Spec R', \overline{F})$.
By \cite[Proposition 2.3, Theorem 2.9]{Harada2009}, the family of subsets
\[
\mathcal{C}:= \{ H \subset \pi_{1}(\Spec R', \overline{F})\colon \text{open subgroup} \mid [\pi_{1}(\Spec R', \overline{F}) : H] \leq |\mathscr{G}|\}
\]
is a finite set.
Let $M'$ be the fraction field of the finite \'{e}tale cover of $R'$ corresponding to $\bigcap_{H \subset \mathcal{C}} H$, and $M''$ the Galois closure of $M'$ over $K$. We note that $M''$ does not depend on $(Y,\polM)$. 
Then $\alpha_{f}|_{\Gal(\overline{F}/L)}$ factors through the finite group $\Gal(M''/L)$, so there are only finitely many possibilities for the $1$-cocycle $\alpha_f$. It finishes the proof.
\end{proof}

\section{Proof of the main theorem}
\label{sectionpfmainthm}
First, we shall show the Shafarevich conjecture for polarized irreducible symplectic varieties.

\begin{theoremsub}
Let $F$ be a finitely generated field over $\Q$,
and $R$ a finite type algebra over $\Z$ which is a normal domain with fraction field $F$.
Let $n,d$ be positive integers, and $c$ a positive number.
We denote the set
\[
\left\{
  (X, \lambda) \left|
  \begin{array}{l}
  X \colon 2n \text{-dimensional irreducible symplectic variety over} \ F \\
 b_{2} (X_{\overline{F}}) \geq 4 \\
  X \ \text{has the Fujiki constant} \ c \\
  X \ \text{is essentially good at any height }1 \text{ prime} \ \p\in \Spec R \\
  \lambda \colon \text{polarization of Beauville degree} \ d
  \end{array}
  \right.
\right\}/\text{$F$-isom}
\]
by $\Shaf(F,R,2n,d,c)$.
Then $\Shaf (F,R,2n,d,c)$ is a finite set.
\label{thmpol}
\end{theoremsub}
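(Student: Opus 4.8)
The plan is to combine the finiteness of twists (Proposition \ref{Proposition:FinitenessTwists2}) with a uniform Kuga--Satake argument and Faltings' theorem, following the strategy for K3 surfaces in \cite{Andre1996}, \cite{She2017}, \cite{Takamatsu2020a}. First I would reduce the finiteness of $F$-isomorphism classes to the finiteness of $\overline{F}$-isomorphism (i.e.\ geometric) classes. Every $(X,\lambda)\in\Shaf(F,R,2n,d,c)$ has essentially good reduction, and for a fixed geometric class Proposition \ref{Proposition:FinitenessTwists2} shows there are only finitely many $F$-forms with essentially good reduction; taking as its base point an actual member of $\Shaf$ lying in that class (which exists by definition of the image), the forgetful map to geometric classes has finite fibres, so it suffices to bound its image. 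Since $M_{2n,d,c}$ is of finite type over $\Q$, only finitely many deformation types, and hence by Remark \ref{remnumdef} only finitely many $\widehat{\Z}$-numerical classes $N$, occur; fixing such an $N$ the index set $I_{2n,d,c,N}$ is finite, and after adjoining an orientation each $(X,\lambda)$ determines a geometric point of one of the finitely many components $M_i$ with $i\in I_{2n,d,c,N}$. It therefore suffices to bound the geometric points of a single $M_i$ arising from $\Shaf$.

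Next I would run the Kuga--Satake construction over a controlled finite extension $F'/F$, chosen to contain the field $E_{2n,N,m}$ needed for the section $\delta_N$ and to support a level-$m$ structure and an orientation. The good reduction hypothesis keeps this extension controllable: essentially good reduction implies that $H^2_{\et}(X_{\overline{F}},\Q_\ell)$ is unramified at every height $1$ prime of $R$, so the field trivialising the mod-$m$ cohomology is unramified outside a fixed finite set and of bounded degree, and by the finiteness of such extensions (\cite{Harada2009}) there are only finitely many possibilities for $F'$, which we may absorb into one extension. Composing the level-$m$ period map $j$ of Proposition \ref{propperiod} with the uniform Kuga--Satake diagram of Section \ref{sectionmoduli},
\[
M_{i,D_i(m)} \xrightarrow{\ j\ } \Sh_{D_i(m)}(\SO_{L_i}) \xrightarrow{\ j_N\ } \Sh_{D_N(m)}(\SO_{L_N}) \xrightarrow{\ \delta_N\ } \Sh_{\mathbb{K}_{N,m}^{\mathrm{sp}}}(\GSpin_{L_N}) \xrightarrow{\ h_N\ } \Sh_{\K_m}(\GSp_{V_N}),
\]
attaches to $(X,\lambda)$ a polarized abelian variety $A$ over $F'$ with level-$m$ structure, of dimension and polarization degree depending only on $2n$, $m$, and $N$. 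Since the Kuga--Satake abelian variety is built from $H^2$, which is a birational invariant of irreducible symplectic varieties, its Tate module is unramified wherever $H^2_{\et}$ is; hence by the N\'eron--Ogg--Shafarevich criterion $A$ has good reduction outside a fixed finite set of primes of the normalization $R'$ of $R$ in $F'$.

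I would then invoke the Shafarevich conjecture for abelian varieties (Faltings, Faltings--Zarhin, and its extension to finitely generated fields), which yields finitely many isomorphism classes of such $A$, i.e.\ finitely many points of $\Sh_{\K_m}(\GSp_{V_N})(F')$ in the image. The morphisms $h_N$, $\delta_N$, $j_N$ are finite (respectively a section) and the period map $j$ is \'etale, hence quasi-finite, so the preimage of a finite set is finite, giving finitely many points of $M_{i,D_i(m)}(F')$. Forgetting the level structure and orientation is finite-to-one, so only finitely many geometric isomorphism classes of $(X,\lambda)$ in $M_i$ arise; summing over the finitely many $i$ and $N$ bounds the image in geometric classes, and combined with the first step this proves that $\Shaf(F,R,2n,d,c)$ is finite.

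The main obstacle I anticipate is not the cohomological half of the argument, which is largely an assembly of Proposition \ref{propperiod}, the Kuga--Satake diagram, and Faltings, but rather the interface between the arithmetic and the geometry. Two points are delicate. First, the moduli stacks of irreducible symplectic varieties are \emph{not} fine, since there may be automorphisms acting trivially on $H^2$, so a single point of the Shimura variety can correspond to several non-isomorphic $F$-forms; this is exactly what Proposition \ref{Proposition:FinitenessTwists2}, via the Matsusaka--Mumford theorem of Section \ref{sectionMatsusakaMumford}, is designed to control. Second, \emph{essentially} good reduction permits both an unramified base change and a birational modification, so one must check that it still forces good reduction of the Kuga--Satake abelian variety; this relies on the birational invariance of $H^2$ for irreducible symplectic varieties together with a uniform control of the auxiliary field extensions.
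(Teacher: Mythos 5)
Your proposal is correct and takes essentially the same approach as the paper's own proof: Proposition \ref{Proposition:FinitenessTwists2} (via Matsusaka--Mumford) to pass from geometric to $F$-isomorphism classes, the uniform Kuga--Satake diagram plus good reduction of the Kuga--Satake abelian variety, Faltings' theorem over finitely generated fields, quasi-finiteness of the period maps, and a Hermite--Minkowski argument for a uniform extension carrying level structures and orientations. The only differences are cosmetic: the paper orders the steps by first bounding $\C$-isomorphism classes of level-structured objects and then invoking the twisting proposition, and your compressed claim that the Kuga--Satake Tate module is unramified wherever $H^2_{\et}$ is, is precisely Andr\'e's argument (using the level-$m$ structure with $m \geq 3$ prime to the residue characteristic and Raynaud's semi-abelian reduction criterion) that the paper likewise cites rather than reproves.
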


We fix an embedding $F \rightarrow \C$. 
Recall that we fix an integer $m \geq 3$.
We may assume $1/m \in R$.
First, we denote the set
\[
\left\{
  (X, \lambda, \omega, \alpha)_{\C} \left|
  \begin{array}{l}
(X, \lambda, \omega, \alpha) \colon \text{an object of} \ \cup_{i\in I_{2n,d,c}} M_{i,D_{i}(m)} (F) \\
X \ \text{is essentially good at any height }1 \text{ prime} \ \p\in \Spec R
  \end{array}
  \right.
\right\}/\text{$\C$-isom}
\]
by $\Shaf(F,R,2n,d,c,m,\C)$.
We shall show the finiteness of $\Shaf(F,R,2n,d,c,m,\C)$.
Since $I_{2n,d,c}$ is a finite set, it suffices to show the finiteness of 
\[
\Shaf(F,R,2n,d,c,m,\C)_{i} \subset \Shaf(F,R,2n,d,c,m,\C)
\]
 which consists of elements that come from objects in $M_{i,D_{i}(m)}$.
We take an object $x = (X, \lambda, \omega,\alpha) \in M_{i,D_{i}(m)}(F)$.
Take a $\widehat{\Z}$-numerically equivalent class $N$ such that $i \in I_{2n,d,c,N}$.
We note that we may assume that $F$ contains the number field $E_{2n,N,m}$, which is defined in Section \ref{sectionmoduli}.
Let $A_{x}$ be an abelian variety corresponding to $h_{N} \circ \delta_{N} \circ j_{N} \circ j (x)$.
\begin{lemmasub}
Let $\p \in \Spec R$ be a height $1$ prime ideal.
Suppose that $X$ is essentially good at $\p$.
Then $A_{x}$ admits good reduction at $\p$, i.e.\, there exists a smooth projective model over the localization $R_{\p}$ whose generic fiber is isomorphic to $A_{x}$.
\label{lemKSgood}
\end{lemmasub}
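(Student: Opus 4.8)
The plan is to verify the N\'{e}ron--Ogg--Shafarevich criterion for $A_{x}$ over the discrete valuation ring $R_{\p}$. Fix a prime $\ell$ invertible at $\p$ and let $p$ be the residue characteristic. Since the formation of N\'{e}ron models and the good reduction criterion are insensitive to passage to the completion, and since a finite \'{e}tale extension $S/\widehat{R}_{\p}$ is unramified (so that the inertia group is unchanged, $I_{\Frac S}=I_{\p}$), it suffices to produce a smooth proper model of $A_{x}\otimes_{F}\Frac S$ over $S$; equivalently, to show that $I_{\Frac S}$ acts trivially on $V_{\ell}(A_{x})=H^{1}_{\et}(A_{x,\overline{F}},\Q_{\ell})^{\vee}$. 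Good reduction over $S$ then descends to good reduction over $\widehat{R}_{\p}$, and hence over $R_{\p}$.

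First I would show that $H^{2}_{\et}(X_{\overline{F}},\Q_{\ell})$ is unramified at $\p$. By the definition of essentially good reduction there is a smooth proper algebraic space $\mathcal{Y}\to S$ over a finite \'{e}tale extension $S$ of $\widehat{R}_{\p}$ whose generic fiber $\mathcal{Y}_{\Frac S}$ is an irreducible symplectic variety birational to $X_{\Frac S}$. A birational map between irreducible symplectic varieties induces a $\Gal(\overline{F}/\Frac S)$-equivariant isometry of their second $\ell$-adic cohomology groups (it is an isomorphism in codimension one, and $H^{2}$ together with the Beauville--Bogomolov form is a birational invariant of such varieties), so $H^{2}_{\et}(X_{\overline{F}},\Q_{\ell})\simeq H^{2}_{\et}(\mathcal{Y}_{\Frac S,\overline{F}},\Q_{\ell})$ as $\Gal(\overline{F}/\Frac S)$-modules. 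By the smooth proper base change theorem for algebraic spaces the right-hand side is unramified with respect to the inertia of $S$; since $S/\widehat{R}_{\p}$ is unramified we have $I_{\Frac S}=I_{\p}$, and therefore $H^{2}_{\et}(X_{\overline{F}},\Q_{\ell})$ is unramified at $\p$.

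Next I would transfer this to $A_{x}$ through the Kuga--Satake construction. Because birational irreducible symplectic varieties carry the same $H^{2}$ as a Galois representation equipped with the Beauville--Bogomolov pairing, the images of $X_{\Frac S}$ and $\mathcal{Y}_{\Frac S}$ in $\Sh_{D_{N}(m)}(\SO_{L_{N}})$ coincide after transporting the level structure along the birational isometry; hence $A_{x}\otimes_{F}\Frac S$ is isogenous to the Kuga--Satake abelian variety attached to $h_{N}\circ\delta_{N}\circ j_{N}\circ j$ of $\mathcal{Y}_{\Frac S}$. Thus it is enough to prove that the Kuga--Satake abelian variety of an irreducible symplectic variety admitting a smooth proper model over a discrete valuation ring has good reduction. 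The orthogonal period point of $\mathcal{Y}_{\Frac S}$ extends, by the smoothness of $\mathcal{Y}/S$ and the theory of integral canonical models of the Shimura varieties $\Sh(\SO_{L_{N}})$ and $\Sh(\GSpin_{L_{N}})$, to an $S$-point of the integral model; pulling back the universal abelian scheme on the integral model of $\Sh_{\K_{m}}(\GSp_{V_{N}})$ along this extension yields an abelian scheme over $S$ whose generic fiber is $A_{x}\otimes_{F}\Frac S$. This gives the required good reduction over $S$, and the descent of the first paragraph finishes the proof.

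The hard part is this last transfer, and I expect it to be the main obstacle. Unramifiedness of $H^{2}$ only says that the associated $\SO_{L_{N}}$-valued Galois representation is unramified, whereas $V_{\ell}(A_{x})$ is governed by a $\GSpin_{L_{N}}$-lift; the discrepancy is an inertia character valued in the central $\mu_{2}=\ker(\GSpin_{L_{N}}\to\SO_{L_{N}}\times\Gm)$, and it cannot be eliminated by the Beauville--Bogomolov datum together with the cyclotomic similitude alone. What makes it work is that the good reduction is genuinely geometric over the unramified extension $\Frac S$: extending the period point to the integral canonical model produces an honest abelian scheme, so that, because $S/\widehat{R}_{\p}$ is unramified and $I_{\Frac S}=I_{\p}$, this potential $\mu_{2}$-ambiguity is automatically trivial.
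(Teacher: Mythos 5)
Your first half is exactly the paper's argument: unramifiedness of $H^{2}_{\et}(X_{\overline{F}},\Z_{\ell})$ at $\p$ follows from smooth proper base change applied to $\mathcal{Y}/S$ together with the fact that a birational map between irreducible symplectic varieties is small, hence induces a Galois-equivariant isomorphism on $H^{2}$. The gap is in your transfer step. You propose to extend the orthogonal period point of $\mathcal{Y}_{\Frac S}$ to an $S$-point of an integral canonical model ``by the smoothness of $\mathcal{Y}/S$,'' and then pull back the universal abelian scheme. But there is no period map over $S$ to apply: the morphism $j\colon M_{i}\to\Sh_{D_{i}}(\SO_{L_{i}})$ exists only in characteristic $0$, and the family $\mathcal{Y}/S$ carries no polarization at all (the definition of essentially good reduction deliberately does not require one to extend), so smoothness of $\mathcal{Y}/S$ gives you nothing beyond the unramifiedness of $H^{2}$ you already extracted. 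Absent such a map, extending the $\Frac S$-point of $\Sh(\SO_{L_{N}})$ (and its $\GSpin$-lift) to an $S$-point of the integral model is essentially \emph{equivalent} to good reduction of the Kuga--Satake abelian variety --- the statement being proved --- so your paragraph on the central $\mu_{2}$-ambiguity, which declares it ``automatically trivial'' because the integral extension produces an honest abelian scheme, is circular. Indeed, in the paper's own later use of integral models (proof of Theorem \ref{cohshaf}), the extension of the $\GSpin$-point over $R'$ is \emph{deduced from} the present lemma, not the other way around.

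What actually closes the gap, and what the paper invokes, is Andr\'e's argument (\cite[Lemma 9.3.1]{Andre1996}, cf.\ \cite[Proposition 4.2.4]{Takamatsu2020a}): since $H^{2}$ is unramified and the similitude character is unramified (residue characteristic $\neq\ell$), inertia acts on $T_{\ell}(A_{x})$ through the kernel of $\GSpin\to\SO\times\Gm$, i.e.\ through $\{\pm 1\}$ --- exactly the discrepancy you identified; then the level-$m$ structure on $A_{x}$, with $m\geq 3$ coprime to the residue characteristic, lets one apply Raynaud's semi-abelian reduction criterion, so inertia also acts unipotently on $T_{\ell}(A_{x})$, and an element that is both unipotent and of finite order is trivial. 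Your proposal names the obstruction but never supplies this (or any) mechanism for killing it, so as written the proof does not go through.
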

\begin{proof}
By the argument in \cite[Lemma 9.3.1]{Andre1996}, (see also \cite[Proposition 4.2.4]{Takamatsu2020a}),
it suffices to show that
$
H^{2}_{\et} (X_{\overline{k}}, \Z_{\ell})
$
is unramified at $\p$ for some prime number $\ell$ which is different from the residue characteristic of $R_{\p}$
 (here, we use that $m$ is coprime to the residue characteristic of $R_{\p}$, and $m\geq 3$ so that we use the Raynaud semi-abelian reduction criterion).
We take a finite unramified extension $S$ of $\widehat{R_{\p}}$ and a smooth proper algebraic space $\mathcal{Y}$ over $S$ as in Definition \ref{essentially good}.
By the smooth and proper base change theorem, $H^{2}_{\et}(\mathcal{Y}_{\overline{\Frac (S)}}, \Z_{\ell})$ is unramified at $\p$ as a $\Gal(\overline{\Frac (S)}/\Frac (S))$-representation.
Since any birational map between irreducible symplectic varieties is small, we also have a $\Gal(\overline{\Frac (S)}/\Frac (S))$-equivariant isomorphism
\[
H^{2}_{\et}(X_{\overline{\Frac(S)}}, \Z_{\ell}) \simeq H^{2}_{\et}(\mathcal{Y}_{\overline{\Frac (S)}}, \Z_{\ell}).
\]
Therefore, it finishes the proof.
\end{proof}

Now we can show the finiteness of $\Shaf (F,R, 2n,d,c,m,\C)$.

\begin{lemmasub}
$\Shaf (F,R, 2n,d,c,m,\C)$
is a finite set.
Moreover, the set
\[
\left\{
  (X, \lambda, \omega, \alpha) \left|
  \begin{array}{l}
(X, \lambda, \omega, \alpha) \colon \text{an object of} \ \cup_{i\in I_{2n,d,c}} M_{i,D_{i}(m)} (F) \\
X \ \text{is essentially good at any height }1 \text{ prime} \ \p\in \Spec R
  \end{array}
  \right.
\right\}/\text{$F$-isom},
\]
which we shall denote by $\Shaf(F,R,2n,d,c,m)$, is a finite set.
\label{lemthmpol}
\end{lemmasub}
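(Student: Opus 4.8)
The plan is to run the Kuga--Satake strategy: attach to each object the abelian variety $A_x$, use its good reduction (Lemma~\ref{lemKSgood}) together with the Shafarevich conjecture for abelian varieties over finitely generated fields to see that only finitely many $A_x$ occur, and then transport this finiteness back up the period diagram, whose horizontal maps have finite fibres. Concretely, after possibly enlarging $F$ so that it contains $E_{2n,N,m}$ (this is harmless, since the $\C$-isomorphism class of $(X,\lambda,\omega,\alpha)_{\C}$ is unchanged by base change and essential good reduction is preserved, so finiteness over the larger field implies finiteness over $F$) and shrinking $\Spec R$ so that $1/m\in R$, I fix $i\in I_{2n,d,c}$ and consider the composite
\[
\Phi := h_N\circ\delta_N\circ j_N\circ j\colon M_{i,D_i(m)}\longrightarrow \Sh_{\mathbb{K}_m}(\GSp_{V_N}),
\]
so that $\Phi(x)$ classifies $A_x$ together with its canonical polarization and level-$m$ structure.

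First I would check that $\Phi$ is quasi-finite on $\C$-points: $j$ is \'etale by Proposition~\ref{propperiod}, $j_N$ has finite fibres since it is induced by the primitive lattice embedding $L_i\hookrightarrow L_N$, the section $\delta_N$ of $f_N$ is a closed immersion, and $h_N$ is finite because $\GSpin_{L_N}\hookrightarrow\GSp_{V_N}$ is a closed embedding compatible with the Shimura data. Next, for $x$ essentially good the abelian variety $A_x$ has good reduction at every height $1$ prime of $R$ by Lemma~\ref{lemKSgood}, while its dimension and the degree of its canonical polarization are fixed; by the Shafarevich conjecture for abelian varieties over finitely generated fields (Faltings, Zarhin) there are only finitely many such $A_x$ up to $F$-isomorphism, hence up to $\C$-isomorphism. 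Since $m\geq 3$, the Siegel modular variety $\Sh_{\mathbb{K}_m}(\GSp_{V_N})$ is a fine moduli space, so the points $\Phi(x)$ lie in a finite subset of $\Sh_{\mathbb{K}_m}(\GSp_{V_N})(\C)$; pulling this finite set back through the quasi-finite map $\Phi$ shows that the objects in question determine finitely many $\C$-points of $M_{i,D_i(m)}$. Summing over the finite index set $I_{2n,d,c}$ yields the finiteness of $\Shaf(F,R,2n,d,c,m,\C)$.

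For the finiteness up to $F$-isomorphism I would use the evident surjection $\Shaf(F,R,2n,d,c,m)\twoheadrightarrow\Shaf(F,R,2n,d,c,m,\C)$ and show that its fibres are finite. If $(X',\lambda',\omega',\alpha')_{\C}\simeq (X,\lambda,\omega,\alpha)_{\C}$, then, since $\Isom$ is represented by a scheme of finite type over $\overline{F}$ and $\overline{F}$ is algebraically closed in $\C$, the isomorphism descends to $\overline{F}$; in particular $(X'_{\overline{F}},\lambda'_{\overline{F}})\simeq (X_{\overline{F}},\lambda_{\overline{F}})$, so $X'$ is an essentially good twist of the polarized variety $(X,\lambda)$. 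By Proposition~\ref{Proposition:FinitenessTwists2} there are only finitely many such $(X',\lambda')$ up to $F$-isomorphism, and each underlies only finitely many tuples $(X',\lambda',\omega',\alpha')$ because the orientation $\omega'$ and the $D_i(m)$-level structure $\alpha'$ range over finite sets. This bounds every fibre, and hence proves the finiteness of $\Shaf(F,R,2n,d,c,m)$.

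The main obstacle is the descent from $\C$- to $F$-isomorphism classes: irreducible symplectic varieties may carry automorphisms acting trivially on $H^2$, so the level structure does not rigidify the moduli problem and a single $\C$-class can spread into many $F$-forms. It is precisely the finiteness of such twists admitting essential good reduction, established in Proposition~\ref{Proposition:FinitenessTwists2} via the unramifiedness of the associated Galois cocycles, that closes this gap.
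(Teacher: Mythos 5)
Your proposal is correct and follows essentially the same route as the paper: Lemma~\ref{lemKSgood} plus Faltings' theorem \cite[VI, \S1, Theorem 2]{Faltings1992} gives finiteness of the Kuga--Satake images, quasi-finiteness of $h_N\circ\delta_N\circ j_N\circ j$ pulls this back to finiteness of $\Shaf(F,R,2n,d,c,m,\C)$, and Proposition~\ref{Proposition:FinitenessTwists2} handles the passage to $F$-isomorphism classes. You merely make explicit some steps the paper leaves implicit (the quasi-finiteness of each map in the composite, the descent of a $\C$-isomorphism to $\overline{F}$, and the finiteness of orientations and level structures on a fixed polarized variety), all of which are sound.
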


\begin{proof}
We will show that $\Shaf (F,R,2n,d,c,m,\C)_{i}$ is a finite set for any $i\in I_{2n,d,c}$.
Take a $\widehat{\Z}$-numerically equivalent class $N$ such that $i \in I_{2n,d,c,N}$.
We may assume that $1/m \in R$.
Moreover, we may assume that $F$ contains the number field $E_{2n,N,m}$ since automorphism groups of polarized irreducible symplectic varieties are finite groups.
By Lemma \ref{lemKSgood}, 
for any $x  \in\Shaf(F,R,2n,d,c,m)_{i}$ (here, $\Shaf(F,R,2n,d,c,m)_{i}$ is defined in a similar way to  $\Shaf (F,R,2n,d,c,m,\C)_{i}$), $A_{x}$ admits a good reduction at any height $1$ prime $\p$.
Therefore, the image 
\[
h_{N} \circ \delta_{N} \circ j_{N} \circ j ( \Shaf(F,R,2n,d,c,m,\C)_{i})
\]
is finite by \cite[VI, \S1, Theorem 2]{Faltings1992}.
Now the finiteness of $\Shaf(F,R,2n,d,c,m,\C)_{i}$ follows from the quasi-finiteness of $h_{N} \circ\delta_{N} \circ j_{N} \circ j$, which holds by Proposition \ref{propperiod}.
Then the finiteness of $\Shaf (F,R,2n,d,c,m)$ follows from Proposition \ref{Proposition:FinitenessTwists2}.
\end{proof}

To give level structures by a uniform field extension of $F$, we need the following lemma.

\begin{lemmasub}
There exists a finite Galois extension $E$ over $F$ such that for every element $(X,\lambda) \in \Shaf (F,R,2n,d,c)$ there exists a level $D_{i}(m)$-structure on $(X,\lambda)_{E}$.
\label{level}
\end{lemmasub}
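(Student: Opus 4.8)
We need a *single* finite Galois extension $E/F$ that simultaneously equips every polarized irreducible symplectic variety in the finite set $\Shaf(F,R,2n,d,c)$ with a level $D_i(m)$-structure after base change to $E$. The point is uniformity: each individual $(X,\lambda)$ can be given such a level structure after *some* finite extension (because a $K$-level structure is a $\pi_1^{\et}$-invariant $K$-orbit of an isometry $\Lambda_i \simeq H^2_{\et}(X_s,\widehat{\Z}(1))$, and invariance is achieved by passing to the finite cover trivializing the relevant Galois action), but we must make one extension work for all of them at once.

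**Plan of the proof.** Here is how I would proceed.

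\begin{enumerate}
\item First, I would invoke the finiteness already established: by Lemma \ref{lemthmpol} the set $\Shaf(F,R,2n,d,c,m)$ is finite, and the forgetful map $\Shaf(F,R,2n,d,c,m)\to\Shaf(F,R,2n,d,c)$ (forgetting the orientation $\omega$ and level data $\alpha$) is surjective after a finite extension --- more precisely, the underlying polarized varieties appearing in $\Shaf(F,R,2n,d,c)$ form a finite set, since $\Shaf(F,R,2n,d,c)$ itself is finite by Theorem \ref{thmpol}. The cleanest route is therefore to show $\Shaf(F,R,2n,d,c)$ is finite first, then handle finitely many objects one at a time and take the compositum of the finitely many resulting extensions.

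\item For a \emph{single} $(X,\lambda)\in\Shaf(F,R,2n,d,c)$, I would produce a level $D_i(m)$-structure over a finite extension as follows. The second $\ell$-adic cohomology $H^2_{\et}(X_{\overline F},\widehat{\Z}(1))$ with its Beauville--Bogomolov pairing, together with $c_1(\lambda)$ and a choice of orientation, is (after choosing an abstract isometry with $\Lambda_i$ sending $v_i\mapsto c_1(\lambda)$ and $\omega_i\mapsto\omega$) acted on by $\Gal(\overline F/F)$ through the discriminant kernel $D_i\subset\SO_{L_i}(\widehat{\Z})$; this is exactly the orientation-preserving, polarization-fixing integral orthogonal action, and it lands in $D_i$ because the Galois action is an isometry fixing $c_1(\lambda)$ and the orientation. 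A $D_i(m)$-level structure is the datum of a $D_i(m)$-orbit of such an isometry that is invariant under $\pi_1^{\et}(\Spec F)=\Gal(\overline F/F)$; such an orbit exists and becomes invariant precisely after base change to the finite extension $E_X/F$ cut out by the image of $\Gal(\overline F/F)$ in $D_i/D_i(m)$, which is a finite group since $D_i(m)$ is open (of finite index) in $D_i$.

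\item Finally, I would take $E$ to be the Galois closure over $F$ of the compositum $\prod_{(X,\lambda)} E_X$ over a set of representatives of the finite set $\Shaf(F,R,2n,d,c)$. Since the index is finite, $E/F$ is finite, and over $E$ every representative acquires a level $D_i(m)$-structure; any variety $F$-isomorphic to a representative inherits one by transport of structure. Throughout I would enlarge $F$ so that $1/m\in R$ and so that $F$ contains the relevant number fields $E_{2n,N,m}$ as in Section \ref{sectionmoduli}, which is harmless since we are free to replace $E$ by a larger Galois extension.
\end{enumerate}

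**Main obstacle.** The crux is Step 2: verifying that the Galois image in the relevant integral orthogonal group genuinely lies in the \emph{discriminant kernel} $D_i$ (so that $D_i(m)$-orbits make sense) and that the resulting extension $E_X$ has \emph{bounded degree independent of $X$}. The index $[D_i:D_i(m)]$ depends only on $L_i$ and $m$, hence only on $i$ (and $I_{2n,d,c}$ is finite), so no uniformity is lost there; but one must be careful that the orientation $\omega$ on $X$ is Galois-stable after passing to $E_X$ (this is automatic since orientations form a $2$-element torsor acted on through a finite quotient). Because we reduce to the already-established finiteness of the isomorphism set and then only combine finitely many finite extensions, uniformity comes essentially for free once the per-object construction of Step 2 is in place.
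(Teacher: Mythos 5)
Your per-object construction (Step 2) is sound and is indeed the right local ingredient: after fixing a marking sending $v_i \mapsto c_1(\lambda)$ and $\omega_i \mapsto \omega$, the Galois action yields a homomorphism $\Gal(\overline{F}/F) \to D_i/D_i(m)$, and a level $D_i(m)$-structure exists exactly over the fixed field $E_X$ of its kernel, of degree at most $[D_i:D_i(m)]$, which is uniformly bounded over the finitely many $i \in I_{2n,d,c}$. The fatal problem is how you pass from each object to a single $E$: Steps 1 and 3 invoke the finiteness of $\Shaf(F,R,2n,d,c)$, i.e.\ Theorem \ref{thmpol}, and this is circular. In the paper, the proof of Theorem \ref{thmpol} consists precisely of applying Lemma \ref{level} to produce a well-defined map $\Shaf(F,R,2n,d,c) \to \Shaf(E,R_E,2n,d,c,m)$ and then citing the finiteness of the target (Lemma \ref{lemthmpol}). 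So Lemma \ref{level} must be proved while $\Shaf(F,R,2n,d,c)$ is still potentially infinite; ``take the compositum over the finitely many objects'' is not available, and Lemma \ref{lemthmpol} (which \emph{is} available at this point) does not help here, since it concerns objects already equipped with level structures over $F$ and hence says nothing about the fields $E_X$.

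The missing idea --- which is the entire content of the paper's one-line proof --- is ramification control combined with the Hermite--Minkowski theorem. Since every $(X,\lambda)\in\Shaf(F,R,2n,d,c)$ has essentially good reduction at all height $1$ primes $\p \in \Spec R$, the representation on $H^{2}_{\et}(X_{\overline{F}},\widehat{\Z}(1))$ is unramified at $\p$ away from the residue characteristic; as $1/m \in R$, the residue characteristics are prime to $m$, and since the quotient $D_i/D_i(m)$ only constrains the components at primes dividing $m$, the homomorphism $\Gal(\overline{F}/F)\to D_i/D_i(m)$ kills inertia at every height $1$ prime of $R$. Hence each $E_X/F$ is unramified over $\Spec R$ \emph{and} of degree bounded by $\max_i [D_i:D_i(m)]$. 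The Hermite--Minkowski theorem for finitely generated fields (Harada--Hiranouchi \cite[Proposition 2.3, Theorem 2.9]{Harada2009}) then gives that only finitely many such extensions of $F$ exist --- even though the set of varieties may a priori be infinite --- and $E$ is taken to be the Galois closure of their compositum. Without this step, bounded degree alone gives nothing: a finitely generated field has infinitely many extensions of any fixed degree $\geq 2$, so your compositum could be infinite.
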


\begin{proof}
This follows from the Hermite--Minkowski theorem \cite[Proposition 2.3, Theorem 2.9]{Harada2009}.
\end{proof}

\subsection*{Proof of Theorem \ref{thmpol}}
We take a finite Galois extension $E$ as in Lemma \ref{level}.
Then for any $(X, \lambda_{X}) \in \Shaf (F,R,2n,d,c)$, there exist $\omega_{X}, \alpha_{X}$ such that
$(X_{E},\lambda_{X,E}, \omega_{X}, \alpha_{X}) \in \Shaf (E,R_{E},2n,d,c,m)$, where $R_{E}$ is the normalization of $R$ in $E$.
Therefore, we have a map
\[
\Shaf (F,R,2n,d,c) \rightarrow \Shaf (E,R,2n,d,c,m)
\]
which is finite-to-one since automorphism groups of polarized irreducible symplectic varieties are finite groups.
Therefore, $\Shaf (F,R_{E},2n,d,c)$ is finite by Lemma \ref{lemthmpol}. \qed

The following is the main theorem of this paper.

\begin{theoremsub}
Let $F$ be a finitely generated field over $\Q$,
and $R$ a finite type algebra over $\Z$ which is a normal domain with fraction field $F$.
Let $n$ be a positive integer.
Let $N$ be a $\widehat{\Z}$-numerically equivalent class of $2n$-dimensional irreducible symplectic varieties.
Suppose that irreducible symplectic varieties of $\widehat{\Z}$-numerically equivalent class $N$ have the second Betti number $\geq 5$.
Then the set
\[
\Shaf(F,R,N) :=
\left\{
  X \left|
  \begin{array}{l}
  X \colon\text{irreducible symplectic variety over} \ F \\
 \widehat{\Z} \text{-numerically equivalent class of} \ X \ \text{is} \ N \\
  X \ \text{is essentially good at any height }1 \text{ prime} \ \p\in \Spec R \\
  \end{array}
  \right.
\right\}/\text{$F$-isom}
\]
is a finite set.
\label{mainthm}
\end{theoremsub}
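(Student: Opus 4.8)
The plan is to reduce the unpolarized finiteness statement (Theorem \ref{mainthm}) to the polarized case (Theorem \ref{thmpol}) by a two-step strategy. First I would show that any irreducible symplectic variety $X$ in the class $N$ carries a polarization of \emph{bounded} Beauville degree, so that $X$ becomes an object counted by $\Shaf(F,R,2n,d,c)$ for finitely many values of $d$; second, I would use the uniform Kuga--Satake construction to control the passage from polarized to unpolarized classes. The Fujiki constant $c$ and the dimension $2n$ are determined by $N$, so the only genuinely unbounded parameter is the polarization degree $d$.

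The first step is where I expect \textbf{the main obstacle} to lie. Given $X$ of $\widehat{\Z}$-numerical class $N$, the geometric Picard lattice $\Pic(X_{\overline{F}})$ embeds into the fixed Beauville--Bogomolov lattice attached to $N$, and by Remark \ref{remnumdef}(2) its isometry type ranges over a finite list $L_{1},\dots,L_{m}$; in particular its discriminant is bounded. What I need is a polarization $\lambda$ on $X$ itself (over $F$, or after a uniform finite extension) whose Beauville--Bogomolov square $b_{X}(\lambda,\lambda)$ is bounded in terms of $N$ alone. For K3 surfaces one finds such a bounded ample class inside each $W$-chamber using the action of the Weyl group generated by $(-2)$-classes on the Picard lattice; for general irreducible symplectic varieties the movable and ample cones are governed by a more delicate wall-and-chamber structure, and crucially there may exist non-isomorphic but birationally equivalent models sharing the same Hodge and lattice data. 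As the introduction indicates, this is exactly the point where the cone conjecture enters, and I would invoke the bound on ample classes in bounded-discriminant lattices established in \cite{Takamatsu2021} to produce, after a finite extension of $F$ bounded independently of $X$, an ample line bundle $\lambda_{X}$ with $b_{X}(\lambda_{X},\lambda_{X}) \le d_{0}$ for some $d_{0}=d_{0}(N)$.

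Once such a bound $d_{0}$ is in hand, I would finish as follows. For each integer $d \le d_{0}$ the polarized Shafarevich set $\Shaf(F,R,2n,d,c)$ is finite by Theorem \ref{thmpol} (whose hypothesis $b_{2}\ge 4$ is implied by our assumption $b_{2}\ge 5$). I note that essential good reduction of $X$ is a condition on $X$ that does not reference any polarization, so if $X$ is essentially good at every height $1$ prime then so is the polarized pair $(X,\lambda_{X})$, placing $(X,\lambda_{X})$ in $\bigcup_{d\le d_{0}} \Shaf(E,R_{E},2n,d,c)$ for the fixed finite extension $E/F$. Thus there are only finitely many possibilities for the $E$-isomorphism class of the underlying variety $X_{E}$. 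The map $\Shaf(F,R,N) \to \bigcup_{d\le d_{0}}\Shaf(E,R_{E},2n,d,c)$ sending $X$ to $(X_{E},\lambda_{X})$ is therefore finite-image, and it remains to check it is finite-to-one: this follows because, for a fixed $X_{E}$, the $F$-forms of $X$ with the same $\widehat{\Z}$-numerical class are classified by a Galois cohomology set $H^{1}(\Gal(E/F),\Aut(X_{E}))$ with finite automorphism group, hence a finite set. Combining finiteness of the image with finiteness of the fibers yields the finiteness of $\Shaf(F,R,N)$.

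I emphasize that the technical heart is the boundedness of $b_{X}(\lambda_{X},\lambda_{X})$: the Faltings--Zarhin finiteness used in Lemma \ref{lemthmpol} and the Kuga--Satake period morphism of Proposition \ref{propperiod} are deployed only after a polarization of controlled degree has been fixed, and it is precisely the possible presence of birationally equivalent non-isomorphic models, absent in the K3 case, that forces the appeal to \cite{Takamatsu2021} rather than a naive Weyl-group argument.
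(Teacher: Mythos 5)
Your proposal follows the same skeleton as the paper's proof (bound the Beauville degree of a polarization, inject into the polarized Shafarevich sets of Theorem \ref{thmpol}, then control the fibers of $X \mapsto X_{E}$), but the step you yourself identify as the technical heart is carried out incorrectly, and the error is fatal. You assert that the geometric Picard lattice of $X$ has bounded discriminant because, by Remark \ref{remnumdef}(2), the full lattice $H^{2}$ ranges over finitely many isometry classes, and you conclude that a polarization of degree $\le d_{0}(N)$ exists with $d_{0}$ depending on $N$ \emph{alone}. Both claims are false: a primitive sublattice of a fixed lattice can have arbitrarily large discriminant, and already for K3 surfaces (a single deformation class, hence a single class $N$) the minimal polarization degree is unbounded, since for every $d$ there are K3 surfaces of Picard rank one generated by a class of square $2d$. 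No bound depending only on $N$ can exist; the bound must use the arithmetic hypothesis of essentially good reduction. In the paper this is exactly Lemma \ref{lemfinlattice}: essentially good reduction forces the Kuga--Satake abelian variety $A_{x}$ to have good reduction everywhere (Lemma \ref{lemKSgood}), Faltings' finiteness \cite[VI, \S1, Theorem 2]{Faltings1992} then leaves only finitely many possible images $j_{N}\circ j(x)$ in the Shimura variety, the Tate conjecture \cite[Theorem 1.6.1]{Andre1996} identifies the transcendental lattice $T(X)_{\widehat{\Z}}$ from this data, and Lemma \ref{latticelem} converts the resulting bound on the transcendental discriminant into a bound on the Picard discriminant; only after all of this does \cite[Lemma 4.3.1]{Takamatsu2021} (Lemma \ref{lempol}) produce the uniform degree bound $M$, which depends on $F$ and $R$, not on $N$ alone. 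Your closing remark that Faltings--Zarhin and the period morphism ``are deployed only after a polarization of controlled degree has been fixed'' inverts the actual logical order: the Kuga--Satake/Faltings input is used \emph{twice} in the paper, and the first use is precisely what makes your first step possible.

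There is a second, independent gap in your treatment of the fibers of $\Shaf(F,R,N) \to \bigcup_{d\le d_{0}}\Shaf(E,R_{E},2n,d,c)$. You classify them by $H^{1}(\Gal(E/F),\Aut(X_{E}))$ and call this finite because of a ``finite automorphism group,'' but the automorphism group of an \emph{unpolarized} irreducible symplectic variety need not be finite (only $\Aut(X,\lambda)$ is finite; unpolarized K3 surfaces of higher Picard rank already have infinite $\Aut$), and $H^{1}$ of a finite group with values in an infinite non-abelian group need not be finite. The paper's Section \ref{sectionremcoh} even exhibits infinitely many twists of a generalized Kummer variety with isomorphic Galois representations on $H^{2}$, so finiteness of forms is genuinely delicate here. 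The paper instead quotes the twist-finiteness theorem \cite[Theorem 1.0.1]{Takamatsu2021} for the fibers of $X \mapsto X_{E}$ (and, relatedly, uses Lemma \ref{unplevel} to arrange trivial Galois action on $\Pic_{X/F}$ over $E$ before Lemma \ref{lempol} can be applied, a hypothesis your sketch also omits). Without these inputs both halves of your reduction remain unproved.
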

  First, we shall show the set
\[
\left\{
  X \left|
  \begin{array}{l}
    X \colon\text{irreducible symplectic variety over} \ F \\
   \widehat{\Z} \text{-numerically equivalent class of} \ X \ \text{is} \ N \\
  \text{there exist} \ d, \lambda, \omega, \alpha \ \text{such that} \\ 
(X, \lambda, \omega, \alpha) \in M_{i,D_{i}(m)} \ \text{for some} \ i\in I_{2n,d,c,N} \\
 \Pic_{X/F} (F) \simeq \Pic_{X/F} (\overline{F}) \\ 
X \ \text{is essentially good at any height }1 \text{ prime} \ \p\in \Spec R
  \end{array}
  \right.
\right\}/\text{$F$-isom},
\]
which we denote by $\Shaf (F,R,N,m)$, is a finite set.

\begin{lemmasub}
Suppose $F$ contains the number field $E_{2n, N, m}$. Then
\[
\{\Pic_{X/F} (F) \mid  X \in \Shaf (F,R,N,m)
\}/ \text{lattice isometry}
\]
is a finite set.
\label{lemfinlattice}
\end{lemmasub}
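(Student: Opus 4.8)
The plan is to exploit the built-in condition $\Pic_{X/F}(F)\simeq\Pic_{X/F}(\overline{F})$ to pass from $F$-rational to geometric Picard lattices, to bound the transcendental lattices by means of the \emph{uniform} Kuga--Satake abelian variety, and finally to recover the Picard lattices by elementary lattice arithmetic.

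First I would dispose of the distinction between the $F$-rational and the geometric Picard lattice. Since an irreducible symplectic variety has $\Pic^{0}=0$, we have $\Pic_{X/F}(\overline{F})=\Pic(X_{\overline{F}})$, and base change along the fixed embedding $\overline{F}\hookrightarrow\C$ induces an isometry $\Pic(X_{\overline{F}})\simeq\Pic(X_{\C})$ for the Beauville--Bogomolov forms (compatibility of the comparison isomorphism with $b_{X}$). Combined with the defining condition $\Pic_{X/F}(F)\simeq\Pic_{X/F}(\overline{F})$ of $\Shaf(F,R,N,m)$, this shows that $\Pic_{X/F}(F)$ is isometric to the geometric Picard lattice $\Pic(X_{\C})$, so it suffices to prove that $\{\Pic(X_{\C}):X\in\Shaf(F,R,N,m)\}$ is finite up to isometry. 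I write $\Lambda_{X}:=H^{2}(X_{\C},\Z(1))$, which by Remark \ref{remnumdef}(2) lies in a finite set of isometry classes, and let $T_{X}:=\Pic(X_{\C})^{\perp}\subset\Lambda_{X}$ be the transcendental lattice, so that $\Pic(X_{\C})$ and $T_{X}$ are mutually orthogonal primitive sublattices of $\Lambda_{X}$.

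The heart of the argument is to bound the $T_{X}$. Given $(X,\lambda,\omega,\alpha)\in M_{i,D_{i}(m)}(F)$ with $i\in I_{2n,d,c,N}$, I would form the Kuga--Satake abelian variety $A_{x}=h_{N}\circ\delta_{N}\circ j_{N}\circ j(x)$, using that the hypothesis $E_{2n,N,m}\subset F$ guarantees that $\delta_{N}$ is defined over $F$. The decisive point, and the reason the conclusion survives the unboundedness of the Beauville degree $d$ (hence of the index $i$ and of the lattice $L_{i}$), is that the target $\Sh_{\K_{m}}(\GSp_{V_{N}})$ depends only on $N$ and $m$. Since $X$ is essentially good at every height $1$ prime, Lemma \ref{lemKSgood} shows $A_{x}$ has good reduction at every such prime, so exactly as in the proof of Lemma \ref{lemthmpol} the finiteness theorem of Faltings \cite[VI, \S1, Theorem 2]{Faltings1992} yields that $\{A_{x}:X\in\Shaf(F,R,N,m)\}$ is finite in $\Sh_{\K_{m}}(\GSp_{V_{N}})(F)$. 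As $h_{N}\circ\delta_{N}$ is quasi-finite ($\delta_{N}$ being a section of $f_{N}$ and $h_{N}$ the finite Kuga--Satake morphism), the points $j_{N}(j(x))\in\Sh_{D_{N}(m)}(\SO_{L_{N}})(F)$ form a finite set, and base changing along $F\hookrightarrow\C$ leaves finitely many $\C$-points. Each such $\C$-point is a Hodge structure of $K3$ type on the fixed lattice $L_{N}$ and hence determines its transcendental sublattice; since $L_{i}\hookrightarrow L_{N}$ is primitive and the Hodge structure on $L_{N}$ is that on $L_{i}$ extended trivially over $L_{i}^{\perp}$, this sublattice is precisely $T_{X}\subset L_{N}$. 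Therefore $\{T_{X}\}$ is finite up to isometry, and in particular $|\disc(T_{X})|$ is bounded.

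Finally I would recover the Picard lattice. With $\Lambda_{X}$ ranging over finitely many classes, $\Pic(X_{\C})\oplus T_{X}$ is a finite-index sublattice of $\Lambda_{X}$, of index $q$, and the corresponding overlattice is encoded by an isotropic glue subgroup that injects into $\disc(T_{X})$ by primitivity of $\Pic(X_{\C})=T_{X}^{\perp}$; hence $q\leq|\disc(T_{X})|$ and $|\disc(\Pic(X_{\C}))|\leq|\disc(T_{X})|\cdot|\disc(\Lambda_{X})|$, a uniform bound. As $\rk\Pic(X_{\C})$ is bounded by the second Betti number determined by $N$, the finiteness of isometry classes of lattices of bounded rank and discriminant (as used in Remark \ref{remnumdef}(2)) leaves only finitely many possibilities for $\Pic(X_{\C})$, which completes the proof. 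The main obstacle is the uniformity in $d$ in the middle step: one must guarantee that every Kuga--Satake variety lands in the single Shimura variety attached to the fixed lattice $L_{N}$, so that Faltings' theorem can be applied once for the entire family rather than degree by degree.
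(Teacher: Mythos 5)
Your proposal is correct, and its overall skeleton is the paper's: the same reduction from $\Pic$ to the transcendental lattice via a glue-group discriminant bound (your overlattice computation $q\leq|\disc(T_{X})|$, $|\disc(\Pic(X_{\C}))|\leq|\disc(T_{X})|\cdot|\disc(\Lambda_{X})|$ is exactly the content of Lemma \ref{latticelem}), the same appeal to finiteness of lattice classes of bounded rank and discriminant, and the same uniformity mechanism --- Lemma \ref{lemKSgood}, Faltings' theorem applied once in $\Sh_{\K_{m}}(\GSp_{V_{N}})$, and quasi-finiteness of $h_{N}\circ\delta_{N}\circ j_{N}\circ j$ --- to make $\{j_{N}\circ j(x_{X})\}$ finite independently of the Beauville degree $d$. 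Where you genuinely diverge is in how the transcendental lattice is extracted from a point of $\Sh_{D_{N}(m)}(\SO_{L_{N}})$. The paper stays arithmetic: it works with the $\widehat{\Z}$-lattice $T(X)_{\widehat{\Z}}=\Pic_{X/F}(F)^{\perp}\subset H^{2}_{\et}(X_{\overline{F}},\widehat{\Z}(1))$ and invokes Andr\'e's Tate conjecture for divisor classes to identify it with $((P^{2}_{\et})^{G_{F}})^{\perp}$, which is then computed from the Galois structure of the universal $\widehat{\Z}$-sheaf pulled back along the $F$-point. You instead pass to the associated $\C$-point and read off the $\Z$-transcendental lattice from the K3-type Hodge structure on $L_{N}$, using Lefschetz $(1,1)$ in place of Tate --- legitimate here precisely because the condition $\Pic_{X/F}(F)\simeq\Pic_{X/F}(\overline{F})$ built into $\Shaf(F,R,N,m)$ lets you replace the $F$-rational by the geometric Picard lattice. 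Your route trades the Tate conjecture for Hodge theory and even gives the slightly sharper finiteness of $T_{X}$ up to $\Z$-isometry rather than $\widehat{\Z}$-isometry; the paper's Galois-theoretic formulation has the compensating advantage that it transfers verbatim to the purely $\ell$-adic setting of Theorem \ref{cohshaf}, where one only assumes unramifiedness of cohomology and the corresponding step cannot be routed through a complex fiber. Two points you use and should flag explicitly are that the Hodge structure at the $\C$-point really is the one of $X_{\C}$ restricted to $L_{i}$ and extended by Tate type over $L_{i}^{\perp}$ (this is the period-map compatibility of Proposition \ref{propperiod}), and that $T_{X}$ remains primitive in $L_{N}$ (it is primitive in $\Lambda_{X}$, lies in $c_{1}(\lambda)^{\perp}\simeq L_{i}$, and $L_{i}\hookrightarrow L_{N}$ is primitive) --- both of which your sketch does in fact address.
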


\begin{proof}
This follows from the same argument as in \cite{She2017}, \cite{Takamatsu2020a}. 
We recall the sketch of arguments.
By the finiteness of lattice isometry classes of lattices bounded rank and discriminant \cite[Chapter 9, Theorem 1.1]{Cassels1982} and Lemma \ref{latticelem}, it suffices to show that
\[
\{
T(X)_{\widehat{\Z}} \mid X \in \Shaf (F,R,N,m)
\}/\text{lattice isometry}
\]
is a finite set.
Here, we put
\[
T (X)_{\widehat{\Z}} = \Pic_{X/F} (F) ^{\perp} \subset H^{2}_{\et} (X_{\overline{F}}, \widehat{\Z}(1)).
\]
For $X \in \Shaf (F,R,N,m)$, we take $d_{X}, \lambda_{X}, \omega_{X}, \alpha_{X}$ such that
\[
x_{X} :=
(X,\lambda_{X},\omega_{X},\alpha_{X})\in M_{i_{X},D_{i_{X}} (m)}
\]
for some $i_{X} \in I_{2n, d_{X},c_{X}, N_{X}}$
By the Tate conjecture for codimension 1 cycles of irreducible symplectic varieties \cite[Theorem 1.6.1]{Andre1996},
we have 
\[
T(X)_{\widehat{\Z}} = (P^{2}_{\et} ((X_{\overline{F}}, \lambda_{\overline{F}}), \widehat{\Z}(1))^{G_{F}})^{\perp},
\]
where we put $G_{F} := \Gal(\overline{F}/F)$ and
\[
P^{2}_{\et}( (X_{\overline{F}}, \lambda_{\overline{F}}), \widehat{\Z}(1)) := c_{1} (\lambda_{\overline{F}})^{\perp} \subset H^{2}_{\et} (X_{\overline{F}}, \widehat{\Z}(1)).
\]
On the other hand, we have 
\[
(P^{2}_{\et} ((X_{\overline{F}}, \lambda_{\overline{F}}), \widehat{\Z}(1))^{G_{F}})^{\perp} = (L_{N,\widehat{\Z}, (X,\lambda_{X},\omega_{X}, \alpha_{X})}^{G_{F}})^{\perp},
\]
where
 $L_{N,\widehat{\Z}, (X,\lambda_{X},\omega_{X}, \alpha_{X})}$ is a $G_{F}$-lattice which is the pull back of the $\widehat{\Z}$-sheaf on $\Sh_{D_{N}(m)}(\SO_{L_{N}})$ corresponding to the representation $D_{N}(m) \rightarrow \SO (L_{N , \widehat{\Z}})$ (see the proof of \cite[Proposition 3.3.3 (1)]{Takamatsu2020a}).
Since the set 
\[
\{j_{N} \circ j(x_{X}) \mid X \in \Shaf (F, R, N,m) \}
\]
is finite by Lemma \ref{lemKSgood}, \cite[VI, \S1, Theorem 2]{Faltings1992}, and quasi-finiteness of $h_{N} \circ \delta_{N}$, we can show that
\[
\{
(L_{N,\widehat{\Z}, (X,\lambda_{X},\omega_{X}, \alpha_{X})}^{G_{F}})^{\perp} \mid X \in \Shaf (F,R,N,m)
\}/\text{lattice isometry}
\]
is a finite set, and it finishes the proof.
\end{proof}

\begin{lemmasub}
Let $M$ be a $\Z$-lattice (i.e.\,a free $\Z$-module of finite rank equipped with an integral non-degenerate symmetric bilinear form $\langle,\rangle$) with discriminant $d$, $N \hookrightarrow M$ be a primitive embedding of lattices.
Let $N^{\vee}$ be a dual lattice of $N$, and we regard $N$ as a sublattice of $N^{\vee}$ via the natural map
\[
N \hookrightarrow N^{\vee}; n \mapsto (x\mapsto \langle x,n \rangle).
\]
We put $N^{\perp}$ as the orthogonal complement of $N$ in $M$.
Then we have 
\[
[N^{\vee}:N] \leq d [ (N^{\perp})^{\vee}:N^{\perp}].
\]
\label{latticelem}
\end{lemmasub}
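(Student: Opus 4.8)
The plan is to translate the claimed index inequality into a statement about discriminants and then to bound the ``glue index'' $k := [M : N \oplus N^{\perp}]$ by the discriminant of $N^{\perp}$. First I would record the standard identity that for any non-degenerate $\Z$-lattice $L$ one has $[L^{\vee} : L] = |\disc L|$; thus the two indices appearing in the statement are $[N^{\vee}:N] = |\disc N|$ and $[(N^{\perp})^{\vee}:N^{\perp}] = |\disc N^{\perp}|$, while $d = |\disc M| = [M^{\vee}:M]$. Since $N \hookrightarrow M$ is primitive (and, as $M$ is non-degenerate, both $N$ and $N^{\perp}$ are non-degenerate), the rational quadratic space $V := M \otimes \Q$ splits as the orthogonal direct sum $V = (N \otimes \Q) \perp (N^{\perp} \otimes \Q)$, so that $N \oplus N^{\perp}$ is a finite-index sublattice of $M$; set $k := [M : N \oplus N^{\perp}]$.

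The next step is the discriminant--index formula. On one hand $\disc(N \oplus N^{\perp}) = \disc N \cdot \disc N^{\perp}$ because the Gram matrix of $N \oplus N^{\perp}$ is block diagonal, and on the other hand, since $N \oplus N^{\perp}$ sits inside $M$ with index $k$, we have $\disc(N \oplus N^{\perp}) = k^{2}\, \disc M$. Taking absolute values yields the key identity $[N^{\vee}:N]\,[(N^{\perp})^{\vee}:N^{\perp}] = k^{2}\, d$.

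The heart of the argument, and the step I expect to be the main obstacle, is the bound $k \le [(N^{\perp})^{\vee}:N^{\perp}]$. To prove it I would use the orthogonal projection $p \colon V \to N^{\perp} \otimes \Q$. For $m \in M$ and $y \in N^{\perp}$ one has $\langle p(m), y\rangle = \langle m, y\rangle \in \Z$, so $H := p(M)$ lands inside $(N^{\perp})^{\vee}$, and clearly $N^{\perp} \subseteq H$. The kernel of $p|_{M}$ is $M \cap (N \otimes \Q)$, which equals $N$ precisely because $N$ is primitive in $M$; this is exactly where primitivity is essential. Since $p$ maps $N \oplus N^{\perp}$ onto $N^{\perp}$ with kernel $N$, it induces an isomorphism $M/(N \oplus N^{\perp}) \overset{\simeq}{\to} H/N^{\perp}$, whence $k = [H : N^{\perp}] \le [(N^{\perp})^{\vee} : N^{\perp}]$.

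Finally I would combine the two inputs: solving the identity of the second paragraph for $[N^{\vee}:N]$ and inserting the bound on $k$ gives
\[
[N^{\vee}:N] = \frac{k^{2}\, d}{[(N^{\perp})^{\vee}:N^{\perp}]} \le \frac{[(N^{\perp})^{\vee}:N^{\perp}]^{2}\, d}{[(N^{\perp})^{\vee}:N^{\perp}]} = d\,[(N^{\perp})^{\vee}:N^{\perp}],
\]
which is the desired inequality. The only genuine content beyond bookkeeping is the projection argument bounding $k$, relying on primitivity; the discriminant--index formula and the identification $[L^{\vee}:L] = |\disc L|$ are standard facts about integral lattices.
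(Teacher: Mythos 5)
Your proof is correct, and it overlaps with the paper's proof in its crucial step while taking a genuinely different route around it. The heart of both arguments is the bound $[M:N+N^{\perp}] \le [(N^{\perp})^{\vee}:N^{\perp}]$: under the usual identification of $(N^{\perp})^{\vee}$ with $\{v \in N^{\perp}\otimes\Q \mid \langle v, N^{\perp}\rangle \subseteq \Z\}$, your orthogonal projection $p$ is exactly the map $m \mapsto \langle m,-\rangle|_{N^{\perp}}$ that the paper uses to produce its injection $M/(N+N^{\perp}) \hookrightarrow (N^{\perp})^{\vee}/N^{\perp}$, and both proofs invoke primitivity (in the form $M \cap (N\otimes\Q) = N$) to identify the kernel with $N+N^{\perp}$. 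Where you diverge is in how $[N^{\vee}:N]$ is related to the glue index $k=[M:N+N^{\perp}]$. The paper uses a second dual-lattice argument: primitivity makes the restriction map $M^{\vee} \to N^{\vee}$ surjective with kernel $N^{\perp}_{M^{\vee}}$, giving $N^{\vee}/N \simeq M^{\vee}/(N+N^{\perp}_{M^{\vee}})$ and hence $[N^{\vee}:N] \le [M^{\vee}:M]\,[M:N+N^{\perp}] = dk$. You instead use the discriminant identities $[L^{\vee}:L]=|\disc L|$, $\disc(N\oplus N^{\perp}) = \disc N \cdot \disc N^{\perp}$, and $\disc(N\oplus N^{\perp}) = k^{2}\disc M$ to obtain the exact relation $[N^{\vee}:N]\,[(N^{\perp})^{\vee}:N^{\perp}] = k^{2}d$, and then substitute the bound on $k$. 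Your route is the more standard one in lattice theory and actually proves an equality from which the inequality follows, so it is slightly sharper; the paper's route avoids determinants entirely, running purely on index manipulations with dual lattices, and uses primitivity twice rather than once. One small quibble: non-degeneracy of $N$ is not a consequence of non-degeneracy of $M$ (an isotropic line in a hyperbolic plane is a degenerate sublattice of a non-degenerate lattice); here it holds because the statement's definition of a lattice includes non-degeneracy of the form, and then $N^{\perp}$ is non-degenerate because $M$ and $N$ both are. This does not affect the validity of your argument.
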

\begin{proof}
Since $N \hookrightarrow M$ is primitive, we have
\[
M^{\vee}
/(N + N_{M^{\vee}}^{\perp}) \simeq N^{\vee}/ N,
\]
where $M^{\vee}$ is the dual lattice of $M$ and $N_{M^{\vee}}^{\perp}$ is the orthogonal complement of $N$ in $M^{\vee}$.
Therefore, we have
\[
[N^{\vee}:N] \leq [M^{\vee}:M][M:(N+N_{M^{\vee}}^{\perp})\cap M] =d [M: (N+N^{\perp})]
\]
Similarly, we have
\[
M/ (N + N^{\perp}) \hookrightarrow M^{\vee} / (N^{\perp} + (N^{\perp})^{\perp}_{M^{\vee}}) \simeq (N^{\perp})^{\vee} / N^{\perp}.
\]
Therefore, we have 
\[
[M: N + N^{\perp}] \leq [(N^{\perp})^{\vee}: N^{\perp}],
\]
and it finishes the proof.
\end{proof}

\begin{lemmasub}
Let $X$ be an irreducible symplectic variety over $F$ with the second Betti number
$ \geq 5$.
Suppose that the action of $\Gal(\overline{F}/F)$ on $\Pic(X_{\overline{F}})$
is trivial.
Then there exists an integer $C_{F,X}$ such that
there exists  a polarization on $Y$ over $F$ of Beauville degree $\leq C_{F,X}$ for any irreducible symplectic variety $Y$ satisfying the following condition:
Irreducible symplectic varieties $Y$ and $X$ are $\widehat{\Z}$-numerically equivalent, and there exists
a $\Gal(\overline{F}/F)$ equivariant isometry 
\[
\phi_{Y}\colon \Pic(X_{\overline{F}}) \simeq \Pic (Y_{\overline{F}}).
\]
\label{lempol}
\end{lemmasub}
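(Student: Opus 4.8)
The plan is to convert the statement into a uniform lattice-theoretic bound on chambers of the positive cone of $\Lambda := \Pic(X_{\overline{F}})$, and then to feed in the cone-conjecture boundedness treated in \cite{Takamatsu2021}. First I would extract the consequences of the hypotheses. Because $\Gal(\overline{F}/F)$ acts trivially on $\Pic(X_{\overline{F}})$ and $\phi_{Y}$ is a $\Gal(\overline{F}/F)$-equivariant isometry, the Galois action on $\Pic(Y_{\overline{F}})$ is trivial as well, so $\Pic(Y_{\overline{F}}) = \Pic(Y_{\overline{F}})^{\Gal(\overline{F}/F)}$, and the Beauville--Bogomolov pairing is transported by $\phi_{Y}$, i.e.\ $b_{Y}(\phi_{Y}(\mu),\phi_{Y}(\mu)) = b_{X}(\mu,\mu)$ for all $\mu \in \Lambda$. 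Hence it suffices to produce, uniformly in $Y$, an integral class $\mu \in \Lambda$ with $b_{X}(\mu,\mu)$ bounded by a constant depending only on $X$ and with $\phi_{Y}(\mu)$ ample on $Y$; the Beauville degree of the resulting polarization then equals $b_{X}(\mu,\mu)$.

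The geometric heart is to understand the set $A_{Y} := \phi_{Y}^{-1}(\amp(Y)) \subset \Lambda_{\R}$. This is a full-dimensional open subcone of one component of the positive cone $\pos = \{x \in \Lambda_{\R} : b_{X}(x,x) > 0\}$, and by the description of ample cones of irreducible symplectic varieties it is exactly one chamber of the wall-and-chamber decomposition of $\pos$ cut out by the hyperplanes orthogonal to the wall divisors (MBM classes). For the fixed numerical class $N$ (equivalently, the fixed deformation type) these classes have Beauville--Bogomolov square bounded in absolute value, so the decomposition is locally finite and depends only on $\Lambda$ and $N$, not on $Y$.

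I would then invoke the uniformity coming from the cone conjecture, in the form proved in \cite{Takamatsu2021}: the integral monodromy group $\Mon \subset \algO(\Lambda)$, which depends only on $N$, acts on the set of these chambers with only finitely many orbits. Choosing representatives $C_{1},\dots,C_{k}$ and, inside each, a primitive integral class $h_{j} \in C_{j} \cap \Lambda$ lying in the open chamber, set $C := \max_{j} b_{X}(h_{j},h_{j})$. For a given $Y$ we may write $A_{Y} = \gamma(C_{j})$ with $\gamma \in \Mon$ (enlarging the group by $-1$ to match the component of $\pos$ if necessary); then $\gamma(h_{j}) \in A_{Y} \cap \Lambda$ and, $\gamma$ being an isometry, $b_{X}(\gamma(h_{j}),\gamma(h_{j})) = b_{X}(h_{j},h_{j}) \le C$. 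Thus $\phi_{Y}(\gamma(h_{j}))$ is an ample class on $Y$ of Beauville degree $\le C$, with $C$ depending only on $\Lambda$ and $N$, hence only on $X$.

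Finally I would descend to $F$. The class $\phi_{Y}(\gamma(h_{j}))$ lies in $\Pic(Y_{\overline{F}}) = \Pic(Y_{\overline{F}})^{\Gal(\overline{F}/F)}$, and since $Y$ is projective over $F$ the cokernel of $\Pic(Y_{F}) \hookrightarrow \Pic(Y_{\overline{F}})^{\Gal(\overline{F}/F)}$ embeds into $\mathrm{Br}(F)$ and is a finitely generated torsion group, hence finite; replacing the class by a suitable multiple therefore yields an ample line bundle over $F$ (and no multiple is needed when $Y(F) \neq \emptyset$). This descent is a secondary point, and absorbing it into $C_{F,X}$ completes the plan. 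The hard part is the finiteness-of-orbits step: unlike for K3 surfaces, the reflections in wall divisors need not be integral isometries and there exist birationally equivalent non-isomorphic models, so the chambers are not governed by a single Weyl group acting simply transitively but by the interplay of the monodromy group and $\Bir(Y)$ --- this is precisely the cone-conjecture input that is taken from \cite{Takamatsu2021}.
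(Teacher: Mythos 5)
Your strategy is, in substance, the paper's own: the paper proves Lemma \ref{lempol} by simply citing \cite[Lemma 4.3.1]{Takamatsu2021} together with Remark \ref{remnumdef}.2, and the content of that cited lemma is precisely the chamber argument you sketch (walls of uniformly bounded square coming from wall divisors, finitely many orbits of chambers under a lattice group, hence an ample integral class of bounded square inside every transported ample cone). You correctly identify, and correctly attribute to \cite{Takamatsu2021}, the hard finiteness-of-orbits input, so the mathematical heart coincides.

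Two points in your write-up need repair, however. First, the parenthetical ``fixed numerical class $N$ (equivalently, the fixed deformation type)'' is not correct: $\widehat{\Z}$-numerical equivalence determines the deformation type of $Y \otimes \C$ only up to finitely many possibilities --- this is exactly what Remark \ref{remnumdef}.2 says, and is why the paper cites it alongside \cite[Lemma 4.3.1]{Takamatsu2021}. Relatedly, since $\phi_{Y}$ is only an isometry of Picard lattices and need not extend to an isometry of the full $H^{2}$'s, the wall classes of $Y$ transported to $\Lambda$ need not coincide with those of $X$; one must instead work with the refined decomposition of the positive cone cut out by \emph{all} integral classes whose square is bounded by the (uniform over the finitely many deformation types) bound for wall divisors, and note that each transported ample cone contains a full-dimensional refined chamber. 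Both points are repairable by taking maxima over finitely many cases, but as written the uniformity of your constant is not established. Second, your final descent step is unjustified as stated: the cokernel of $\Pic(Y) \hookrightarrow \Pic(Y_{\overline{F}})^{\Gal(\overline{F}/F)}$ is finite for each $Y$, but you give no bound on its exponent that is uniform in $Y$, so the required multiple cannot simply be ``absorbed into $C_{F,X}$'' --- the Beauville degree scales by the square of that multiple. Fortunately the step is unnecessary: in this paper a polarization is a section of the Picard functor (cf.\ the definition of the moduli stack, where $\lambda \in \Pic_{X/S}(S)$), and $\Pic_{Y/F}(F) = \Pic(Y_{\overline{F}})^{\Gal(\overline{F}/F)}$, so the Galois-invariant ample class produced by the chamber argument already \emph{is} the required polarization over $F$; the Brauer-obstruction discussion should be deleted rather than patched.
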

\begin{proof}
This follows from \cite[Lemma 4.3.1]{Takamatsu2021} and Remark \ref{remnumdef}.2.
\end{proof}
Now we can show the finiteness of $\Shaf (F,R,N,m)$.
\begin{lemmasub}
$\Shaf (F,R,N,m)$ is a finite set.
\label{lemmainthm}
\end{lemmasub}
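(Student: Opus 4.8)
My strategy is to endow each $X \in \Shaf(F,R,N,m)$ with a polarization of \emph{uniformly} bounded Beauville degree, thereby reducing to the polarized Shafarevich finiteness of Theorem \ref{thmpol}. Two pieces of the hypothesis are exploited at the outset: since a $\widehat{\Z}$-numerically equivalent class fixes the Fujiki constant, a single $c$ is attached to $N$; and the defining condition $\Pic_{X/F}(F) \simeq \Pic_{X/F}(\overline{F})$ says precisely that $\Gal(\overline{F}/F)$ acts trivially on $\Pic(X_{\overline{F}})$ for every member of the set. I would first arrange $F \supseteq E_{2n,N,m}$ by the same finite-extension reduction used for Theorem \ref{thmpol}, so that Lemma \ref{lemfinlattice} applies and tells us that $\{\Pic_{X/F}(F) : X \in \Shaf(F,R,N,m)\}$ meets only finitely many lattice isometry classes.

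Fixing representatives $X_1,\dots,X_k$, one for each Picard-lattice isometry class that occurs, I would invoke Lemma \ref{lempol} for each $X_j$ to produce a constant $C_{F,X_j}$, and set $C := \max_j C_{F,X_j}$. For an arbitrary $X \in \Shaf(F,R,N,m)$, the lattice $\Pic(X_{\overline{F}})$ is isometric to $\Pic((X_j)_{\overline{F}})$ for some $j$; because both Galois actions are trivial this isometry is automatically $\Gal(\overline{F}/F)$-equivariant, so Lemma \ref{lempol} (with $X_j$ playing the role of its $X$ and our $X$ playing the role of its $Y$) supplies a polarization $\lambda$ on $X$ of Beauville degree $d \leq C$.

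Each resulting pair $(X,\lambda)$ is then a $2n$-dimensional irreducible symplectic variety of Fujiki constant $c$, second Betti number $\geq 5 \geq 4$, Beauville degree $d \leq C$, essentially good at every height $1$ prime of $\Spec R$; that is, $(X,\lambda) \in \Shaf(F,R,2n,d,c)$ for some $d \leq C$. By Theorem \ref{thmpol} each $\Shaf(F,R,2n,d,c)$ is finite, and only finitely many degrees $d \leq C$ intervene, so $\bigcup_{d \leq C}\Shaf(F,R,2n,d,c)$ is finite. The assignment carrying a class in $\Shaf(F,R,N,m)$ to the $F$-isomorphism class of its underlying variety is injective and lands in the set of underlying varieties of this finite union; hence $\Shaf(F,R,N,m)$ is finite.

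The crux is the passage to a uniform degree bound $C$. For one fixed variety this bound is exactly Lemma \ref{lempol}, whose proof rests on the cone-conjecture analysis of \cite{Takamatsu2021} and genuinely requires handling birationally equivalent but non-isomorphic irreducible symplectic varieties --- the difficulty flagged in the introduction. Turning the per-variety bound into a bound valid across the whole family is precisely what the finiteness of Picard lattices (Lemma \ref{lemfinlattice}) delivers, and this in turn is the Kuga--Satake/period-map input that forces $F \supseteq E_{2n,N,m}$. The remaining manipulations --- fixing $c$, matching each Picard lattice to one of finitely many representatives, and descending to the polarized case --- are straightforward assembly.
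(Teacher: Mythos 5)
Your strategy coincides with the paper's own proof: bound the occurring Picard lattices by Lemma \ref{lemfinlattice}, convert this via Lemma \ref{lempol} into a uniform bound $M$ on the Beauville degree of some polarization, and then embed $\Shaf(F,R,N,m)$ into $\bigcup_{d\leq M}\Shaf(F,R,2n,d,c_N)$, which is finite by Theorem \ref{thmpol}. Your explicit assembly (one representative $X_j$ per occurring isometry class, the observation that triviality of both Galois actions makes any isometry automatically equivariant so that Lemma \ref{lempol} applies with $X_j$ as source and $X$ as target, then $C:=\max_j C_{F,X_j}$) is exactly what the paper compresses into a single sentence, and that part is correct.

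However, there is a genuine gap in your very first step, the reduction to $F\supseteq E_{2n,N,m}$. The ``same finite-extension reduction used for Theorem \ref{thmpol}'' rests on the finiteness of automorphism groups of \emph{polarized} irreducible symplectic varieties: the fibers of the base-change map there consist of polarized twists, controlled by $H^{1}$ of a finite Galois group with coefficients in the finite group $\Aut(X_{\overline{F}},\polL_{\overline{F}})$. The set $\Shaf(F,R,N,m)$ consists of \emph{unpolarized} varieties, for which $\Aut(X_{\overline{F}})$ may be infinite, so the fibers of $X\mapsto X_{E}$ are sets of twists whose finiteness is not formal; it is precisely the finiteness-of-twists theorem \cite[Theorem 1.0.1]{Takamatsu2021}, and that is what the paper cites at this point. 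As written, your justification of this step would fail. The gap can be repaired either by invoking that citation, or by avoiding the reduction altogether: since every $X\in\Shaf(F,R,N,m)$ has trivial Galois action on $\Pic(X_{\overline{F}})$, its Picard lattice is unchanged under base change to $E:=F\cdot E_{2n,N,m}$, and $X_{E}$ lies in $\Shaf(E,R_{E},N,m)$ (with $R_{E}$ the normalization of $R$ in $E$); hence Lemma \ref{lemfinlattice} applied over $E$ already shows that only finitely many isometry classes occur over $F$, and your representatives and Lemma \ref{lempol} can then be taken over $F$ itself, with no descent of finiteness needed.
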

\begin{proof}
We may suppose that $F$ contains the number field $E_{2n, N, m}$ by \cite[Theorem 1.0.1]{Takamatsu2021}
By Lemma \ref{lemfinlattice}, Lemma \ref{latticelem} and Lemma \ref{lempol}, there exists an integer $M$ such that
any $X\in \Shaf (F,R, N,m)$ admits a primitive polarization $\lambda_{X}$ of Beauville degree $\leq M$.
Then $X \mapsto (X,\lambda_{X})$ gives an injection
\[
\Shaf (F,R,N,m) \hookrightarrow \bigcup_{d\leq M} \Shaf (F,R,2n,d,c_{N}),
\]
where $c_{N}$ is the Fujiki constant defined by $N$.
By Theorem \ref{thmpol}, $\Shaf (F,R,N,m)$ is a finite set.
\end{proof}

To prove Theorem \ref{mainthm}, we need an unpolarized variant of Lemma \ref{level}.
 
 \begin{lemmasub}

 There exists a finite Galois extension $E$ over $F$ such that for every element $X \in \Shaf (F,R,N)$, the following hold.
 \begin{enumerate}
 \item
 There exists a polarization $\lambda$ on $X$, an orientation $\omega$ on $X_{E}$, a level $m$-structure $\alpha$ on $X_{E}$, and positive integers $d$ such that
 $(X_{E}, \lambda_{E}, \omega, \alpha) \in M_{i,D_{i}(m)} (E)$ for some $i \in I_{2n,d,c_{N},N}$.
 \item
 $\Pic_{X/F}(E) = \Pic_{X/F} (\overline{E})$.
 \end{enumerate}
 \label{unplevel}
 \end{lemmasub}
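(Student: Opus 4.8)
The plan is to imitate the proof of Lemma \ref{level}, producing $E$ as a finite compositum of extensions of $F$ that are of bounded degree and unramified outside a fixed proper closed subset of $\Spec R$, so that the Hermite--Minkowski-type finiteness of \cite[Proposition 2.3, Theorem 2.9]{Harada2009} guarantees there are only finitely many of them and their compositum is again finite over $F$. The point making this work uniformly in $X \in \Shaf(F,R,N)$ is that all the relevant lattices and finite groups are governed by the fixed numerical class $N$ (see Remark \ref{remnumdef} and Definition \ref{deflattice}), so every piece of data to be trivialized has complexity bounded in terms of $N$ and $m$ alone. For part (1) I would first note that an irreducible symplectic variety is projective by definition, so $X$ is polarizable over $F$; enlarging $E$ so that all of $\Pic(X_{\overline F})$ is defined over $E$ (this is part (2)), I may take $\lambda$ to be primitive of Fujiki constant $c_N$ and some Beauville degree $d$, whence $(X_E,\lambda_E,\omega,\alpha)$ will lie in $M_{i,D_i(m)}(E)$ for the component $i\in I_{2n,d,c_N,N}$ determined by these data. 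Thus part (1) reduces to producing, over a uniform $E$, an orientation $\omega$ and a level $m$-structure $\alpha$ on $X_E$. Their existence is obstructed by the image of $\Gal(\overline F/F)$ in the order-$\le 2$ group acting on $\det R^2_{\et} f_*\mu_4$ and in an $\algO(L_N\otimes\Z/m\Z)$-type quotient attached to $N$ and $m$; since $1/m\in R$ and $X$ is essentially good everywhere, Lemma \ref{lemKSgood} makes $H^2_{\et}(X_{\overline F},\widehat\Z(1))$ unramified at every height $1$ prime, so these obstructions are cut out by extensions of degree bounded by $N,m$ and unramified over all of $\Spec R$. Hermite--Minkowski then bounds them, and I take the compositum $E_1$.

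The genuinely new content is part (2). The action of $\Gal(\overline F/F)$ on the finitely generated abelian group $\Pic(X_{\overline F})$ is continuous and preserves the Beauville--Bogomolov lattice structure, hence is a continuous homomorphism from a profinite group to the \emph{discrete} group $\Aut(\Pic(X_{\overline F}))$, and therefore has finite image. On the free quotient $\Pic(X_{\overline F})/\mathrm{tors}$ this image is a finite subgroup of the orthogonal group of a lattice of rank at most the second Betti number determined by $N$, so by Minkowski's bound on finite subgroups of $\GL_r(\Z)$ its order is bounded in terms of $N$; the action on the torsion subgroup, whose order $T_N$ is a deformation invariant bounded by $N$, contributes a further bounded factor. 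Writing $E_X\subset\overline F$ for the fixed field of the kernel of $\Gal(\overline F/F)\to \Aut(\Pic(X_{\overline F}))$, we obtain $[E_X:F]$ bounded uniformly in $X$; and since $\Pic_{X/F}(E)=\Pic(X_{\overline F})^{\Gal(\overline F/E)}$, condition (2) holds over any field containing $E_X$.

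It remains to check that $E_X/F$ is unramified outside a fixed proper closed subset of $\Spec R$, and this is the main obstacle. Let $\p$ be a height $1$ prime of residue characteristic $p$ and choose a prime $\ell\ne p$. Essentially good reduction at $\p$ (Lemma \ref{lemKSgood}) makes $H^2_{\et}(X_{\overline F},\Z_\ell(1))$ unramified at $\p$, so the inertia group $I_\p$ acts trivially on it. Through the cycle class map the free quotient $\Pic(X_{\overline F})/\mathrm{tors}$ embeds $\Gal$-equivariantly into $H^2_{\et}(X_{\overline F},\Z_\ell(1))$, and through the Kummer sequence each $\Pic(X_{\overline F})/\ell^n$ embeds $\Gal$-equivariantly into $H^2_{\et}(X_{\overline F},\mu_{\ell^n})$; hence $I_\p$ acts trivially both on the free part and, letting $\ell$ run over the prime divisors of $T_N$ distinct from $p$ and $n$ grow, on the torsion. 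Thus as soon as $p\nmid T_N$ the inertia $I_\p$ acts trivially on all of $\Pic(X_{\overline F})$, so $E_X/F$ is unramified at $\p$. Inverting $T_N$ removes only the finitely many bad residue characteristics and produces a fixed open $\Spec R[1/T_N]$ over which every $E_X/F$ is unramified; applying \cite[Proposition 2.3, Theorem 2.9]{Harada2009} over this open yields finitely many possibilities for $E_X$, whose compositum I call $E_2$. Then $E:=E_1E_2$ has the required property. The delicate step is exactly this transfer of unramifiedness from $H^2_{\et}$ to the integral Néron--Severi group including its torsion, which is what forces the harmless enlargement of the excluded locus by the divisors of $T_N$.
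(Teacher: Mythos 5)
Your part (2) is essentially correct, and it fleshes out what the paper compresses into the words ``by the Hermite--Minkowski theorem'': the Galois image in $\Aut(\Pic(X_{\overline F}))$ is finite of order bounded by Minkowski's bound on finite subgroups of $\GL_{\rho}(\Z)$, and inertia acts trivially by embedding $\Pic(X_{\overline F})\otimes\Z_\ell$ into $H^{2}_{\et}(X_{\overline F},\Z_\ell(1))$ for $\ell$ prime to the residue characteristic. (Your torsion discussion is vacuous: $X_{\overline F}$ is simply connected by definition, so $H^{2}(X,\Z)$ and hence $\Pic(X_{\overline F})$ is torsion-free.) The genuine gap is in part (1), in the sentence asserting that ``Lemma \ref{lemKSgood} makes $H^{2}_{\et}(X_{\overline F},\widehat\Z(1))$ unramified at every height $1$ prime'' so that the obstruction, an ``$\algO(L_N\otimes\Z/m\Z)$-type quotient,'' is killed by extensions unramified over all of $\Spec R$. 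Both halves fail. First, $D_i(m)=f_i(\mathbb{K}^{\mathrm{sp}}_{i,m})$ is the image of the \emph{spin} congruence subgroup, not the principal congruence subgroup $\{g\in D_i \mid g\equiv 1 \bmod m\}$: it imposes a spinor-norm-type condition (membership in $f_i(\GSpin_{L_i}(\Z_\ell))$) at \emph{every} prime $\ell$, not only at $\ell\mid m$. This is precisely why the paper's proof cites Lemma \ref{lemlevel}, whose bound $Cm^{2^{(b_2-2)}}$, with $C$ a $2$-adic $\SO/\GSpin$ index, is visibly not of the shape $|\algO(L_N\otimes\Z/m\Z)|$. Second, at a height $1$ prime $\p$ of residue characteristic $p$, essentially good reduction gives unramifiedness of $H^{2}_{\et}(X_{\overline F},\Z_\ell)$ only for $\ell\neq p$; the $\Z_p$-component is crystalline, not unramified, and Lemma \ref{lemKSgood} asserts good reduction of the Kuga--Satake abelian variety, not the statement you attribute to it.

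Because of this, your argument does not show that the field trivializing a $D_i(m)$-level structure is unramified at $\p$: you must show the image of inertia in the $p$-component lands in $f_i(\GSpin_{L_i}(\Z_p))$, and prime-to-$p$ unramifiedness says nothing about this. When $L_i\otimes\Z_p$ is self-dual and $p$ is odd the condition is vacuous (then $\GSpin_{L_i}(\Z_p)\to\SO_{L_i}(\Z_p)$ is surjective because $H^{1}_{\et}(\Spec\Z_p,\Gm)=0$), but $\disc(L_i)$ involves the Beauville degree $d$ of the polarization you chose, and $d$ is \emph{not} bounded over $X\in\Shaf(F,R,N)$ at this point of the paper ($d$ gets bounded only later, in Lemma \ref{lemmainthm}); so the set of bad primes moves with $X$ and cannot be excised by shrinking $\Spec R$ once and for all. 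The missing idea is the one the paper uses (tersely here, and in full in the proof of Theorem \ref{cohshaf}): combine Lemma \ref{lemKSgood} (good reduction of the Kuga--Satake abelian variety at $\p$, using $1/m\in R$ and $m\geq 3$) with the extension and \'etaleness properties of integral models of the $\GSpin$ and $\SO$ Shimura varieties, to conclude that the spin-level lift of $j(x)$ spreads out finite \'etale over $R_\p$ and hence that the level-structure field is unramified at $\p$ uniformly in $X$; only after that does Hermite--Minkowski apply. Alternatively, with your purely mod-$m$ obstruction one proves the lemma for the groups $D_i(m)'$ of the proof of Theorem \ref{cohshaf}, but those do not admit the section $\delta_N$ of $f_N$ that the proof of Lemma \ref{lemthmpol} and Theorem \ref{mainthm} requires.
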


\begin{proof}
We can take a field $E$ satisfying $(1)$ by the Hermite--Minkowski theorem \cite[Proposition 2.3, Theorem 2.9]{Harada2009} and Lemma \ref{lemlevel}.
We can also take a field $E$ satisfying $(2)$ by the Hermite--Minkowski theorem.
\end{proof}

\begin{lemmasub}
We fix $N$, a $\widehat{\Z}$-numerically equivalent class of irreducible symplectic varieties.
We take an irreducible symplectic variety $X$ over a field $k$ of characteristic $0$, which lies in $N$.
We have
\[
[D_{i}: D_{i} (m)] \leq C m^{2^{(b_{2}(X_{\overline{k}})-2)}},
\]
where we put
\[
C:= [\SO (H^{2}_{\et} (X_{\overline{k}},\Z_{2})): f (\GSpin (H^{2}_{\et} (X_{\overline{k}}. \Z_{2})))],
\]
and $f\colon \GSpin \rightarrow \SO$ is the natural homomorphism defined by the conjugation.
\label{lemlevel}
\end{lemmasub}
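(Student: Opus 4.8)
The plan is to factor the index through the spin group and then bound the two resulting pieces separately, one giving the power of $m$ and the other the constant $C$. The starting observation is the chain of subgroups
\[
D_{i}(m) = f_{i}(\mathbb{K}_{i,m}^{\mathrm{sp}}) \subseteq f_{i}(\GSpin_{L_{i}}(\widehat{\Z})) \subseteq D_{i},
\]
where the last inclusion holds because $\GSpin_{L_{i}}$ acts on $L_{i}$ while fixing $v_{i}$, so $f_{i}(\GSpin_{L_{i}}(\widehat{\Z}))$ lands in the stabilizer of $v_{i}$ inside $\SO_{\Lambda_{i}}(\widehat{\Z})$, which is exactly the discriminant kernel $D_{i}$. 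This produces the multiplicative factorization
\[
[D_{i} : D_{i}(m)] = [D_{i} : f_{i}(\GSpin_{L_{i}}(\widehat{\Z}))] \cdot [f_{i}(\GSpin_{L_{i}}(\widehat{\Z})) : f_{i}(\mathbb{K}_{i,m}^{\mathrm{sp}})],
\]
and since $f_{i}$ is a surjective homomorphism onto its image, the second factor is at most $[\GSpin_{L_{i}}(\widehat{\Z}) : \mathbb{K}_{i,m}^{\mathrm{sp}}]$.

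For this congruence factor I would argue crudely. By the very definition of $\mathbb{K}_{i,m}^{\mathrm{sp}}$ as the kernel of the reduction $\GSpin_{L_{i}}(\widehat{\Z}) \hookrightarrow (C^{+}(L_{i}) \otimes \widehat{\Z})^{\times} \to (C^{+}(L_{i}) \otimes \widehat{\Z}/m\widehat{\Z})^{\times}$, the quotient $\GSpin_{L_{i}}(\widehat{\Z})/\mathbb{K}_{i,m}^{\mathrm{sp}}$ injects into the finite group $(C^{+}(L_{i}) \otimes \Z/m\Z)^{\times}$, hence into the underlying additive set $C^{+}(L_{i}) \otimes \Z/m\Z$. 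The latter has exactly $m^{\rk_{\Z} C^{+}(L_{i})}$ elements, and since $L_{i} = v_{i}^{\perp}$ has $\dim_{\Q} L_{i} = b_{2}-1$, the even Clifford algebra has rank $\rk_{\Z} C^{+}(L_{i}) = 2^{(b_{2}-1)-1} = 2^{b_{2}-2}$. This yields $[\GSpin_{L_{i}}(\widehat{\Z}) : \mathbb{K}_{i,m}^{\mathrm{sp}}] \le m^{2^{b_{2}-2}}$. The bound is far from optimal, but it is uniform over $i$, which is all that the Hermite--Minkowski application requires.

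It remains to bound the first factor $[D_{i} : f_{i}(\GSpin_{L_{i}}(\widehat{\Z}))]$ by $C$, and this is where I expect the genuine work to lie. The subtlety is that $f_{i}(\GSpin_{L_{i}}(\widehat{\Z}))$ is the image of the \emph{integral spin group}, while $D_{i}$ is cut out by honest lattice isometries; their discrepancy is governed by the spinor norm via the exact sequence $1 \to \Gm \to \GSpin_{L_{i}} \xrightarrow{f_{i}} \SO_{L_{i}} \to 1$ together with the vanishing of $\Pic(\Z_{p})$. I would show that the local index is trivial at every odd prime, using that the spinor norm of the discriminant kernel of $L_{i}$ lies in the unit square classes there, so that the product over primes collapses to the $p=2$ contribution, and then bound that by $C = [\SO(H^{2}_{\et}(X_{\overline{k}},\Z_{2})) : f(\GSpin(H^{2}_{\et}(X_{\overline{k}},\Z_{2})))]$, the $2$-adic discrepancy between lattice isometries and the spin image for the full Beauville--Bogomolov lattice. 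Combining the three steps gives $[D_{i} : D_{i}(m)] \le C\, m^{2^{b_{2}-2}}$. The main obstacle is precisely this last comparison: one must control the spinor norm on the discriminant kernel prime-by-prime and match the $2$-adic part with $C$ as defined, which requires the reductive group-scheme structure of $\SO_{L_{i}}$ and $\GSpin_{L_{i}}$ at $2$ rather than the naive lattice picture.
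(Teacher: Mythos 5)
Your first two steps are correct, and they are the same first half as the argument the paper imports from the K3 case: the chain $D_{i}(m)\subseteq f_{i}(\GSpin_{L_{i}}(\widehat{\Z}))\subseteq D_{i}$, multiplicativity of indices, and the count $[\GSpin_{L_{i}}(\widehat{\Z}):\mathbb{K}_{i,m}^{\mathrm{sp}}]\leq |C^{+}(L_{i})\otimes\Z/m\Z|=m^{2^{b_{2}-2}}$ (the rank of $C^{+}$ of the rank-$(b_{2}-1)$ lattice $L_{i}$) is exactly where the exponent comes from. The genuine gap is in your third step, the inequality $[D_{i}:f_{i}(\GSpin_{L_{i}}(\widehat{\Z}))]\leq C$, and both of your claims about it fail as stated. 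First, "the local index is trivial at every odd prime" silently identifies $D_{i}$ with the discriminant kernel of $L_{i}$; but $D_{i}$ is by definition the stabilizer of $v_{i}$ in $\SO_{\Lambda_{i}}(\widehat{\Z})$, and the two agree at $p$ only when $\Lambda_{i}\otimes\Z_{p}$ is unimodular. That is automatic for K3-type (which is why the cited argument of the earlier paper never meets the issue), but not in general: $\disc\Lambda_{i}=3$ for $OG_{10}$-type and $2(n+1)$ for Kummer type. At an odd $p\mid\disc\Lambda_{i}$ with $p\nmid d$ one can write $\Lambda_{i}\otimes\Z_{p}\cong(\Z_{p}v_{i}\perp M)\perp\langle p\varepsilon\rangle$ with $M$ unimodular; the isometry acting as the reflection in a unit-norm $w\in M$ and as $-1$ on $\langle p\varepsilon\rangle$ lies in the $p$-component $D_{i,p}$ of $D_{i}$, acts by $-1$ on $\disc(L_{i}\otimes\Z_{p})$, and therefore cannot lie in $f_{i}(\GSpin_{L_{i}}(\Z_{p}))$, since the image of an integral GSpin group always acts trivially on the discriminant group. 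So the odd local indices are not all $1$, and no spinor-norm computation on $L_{i}$ can make them so; those primes must also be absorbed into the constant. Second, at $p=2$ you only name the difficulty, and the tool you invoke is the wrong one: $\SO_{L_{i}}$ and $\GSpin_{L_{i}}$ are in general \emph{not} reductive over $\Z_{2}$ (e.g.\ when $2\mid d$), which is precisely why the prime $2$ is quarantined into a constant at all.

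The missing idea, which handles $p=2$ and the good odd primes in one stroke and makes the bound uniform in $d$ and $i$, is the inclusion
\[
\{\,g\in f(\GSpin_{\Lambda_{i}}(\Z_{p}))\mid g(v_{i})=v_{i}\,\}\ \subseteq\ f_{i}(\GSpin_{L_{i}}(\Z_{p})).
\]
Indeed, a lift $\tilde{g}\in\GSpin_{\Lambda_{i}}(\Z_{p})$ of such a $g$ commutes with $v_{i}$; writing $C^{+}(\Lambda_{i}\otimes\Q_{p})=C^{+}(L_{i}\otimes\Q_{p})\oplus v_{i}C^{-}(L_{i}\otimes\Q_{p})$ and computing $v_{i}(a+v_{i}b)-(a+v_{i}b)v_{i}=2q(v_{i})b$, the centralizer of $v_{i}$ is exactly $C^{+}(L_{i}\otimes\Q_{p})$; and since $L_{i}\otimes\Z_{p}$ is a $\Z_{p}$-module direct summand of $\Lambda_{i}\otimes\Z_{p}$, the subalgebra $C^{+}(L_{i}\otimes\Z_{p})$ is saturated in $C^{+}(\Lambda_{i}\otimes\Z_{p})$, so $\tilde{g}$ and $\tilde{g}^{-1}$ lie in $C^{+}(L_{i}\otimes\Z_{p})$, i.e.\ $g\in f_{i}(\GSpin_{L_{i}}(\Z_{p}))$. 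Granting this, the elementary inequality $[B:A\cap B]\leq[G:A]$ for subgroups $A,B\leq G$ gives, prime by prime, $[D_{i,p}:f_{i}(\GSpin_{L_{i}}(\Z_{p}))]\leq[\SO_{\Lambda_{i}}(\Z_{p}):f(\GSpin_{\Lambda_{i}}(\Z_{p}))]$. The right-hand side is $1$ at every odd $p$ with $\Lambda_{i}\otimes\Z_{p}$ unimodular (there both group schemes are reductive over $\Z_{p}$ and $H^{1}_{\mathrm{fppf}}(\Z_{p},\Gm)=\Pic(\Z_{p})=0$), it equals $C$ at $p=2$, and at the odd primes dividing $\disc\Lambda_{i}$ it contributes the extra bounded, $d$-independent factors exhibited above. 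This proves the lemma with the constant $\prod_{p}[\SO_{\Lambda_{i}}(\Z_{p}):f(\GSpin_{\Lambda_{i}}(\Z_{p}))]$, which collapses to the paper's purely $2$-adic $C$ exactly in the unimodular situation of the K3 paper that the one-line proof refers to; without the displayed inclusion, your proposal never connects the pair $(D_{i},\GSpin_{L_{i}})$ to a constant defined via the full lattice $\Lambda_{i}$.
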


\begin{proof}
This follows from the same argument as in \cite[Corollary 3.1.8]{Takamatsu2020a}.
\end{proof}

\subsection*{Proof of Theorem \ref{mainthm}}
We take an $E$ as in Lemma \ref{unplevel}.
Then $X \mapsto X_{E}$ gives a morphism
\[
\Shaf (F,R,N) \rightarrow \Shaf (E,R_{E},N,m),
\]
whose any fiber is finite by \cite[Theorem 1.0.1]{Takamatsu2021}.
Therefore, $\Shaf (F,R,N)$ is finite by Lemma \ref{lemmainthm}.  \hfill $\Box$

\section{Remarks on cohomological generalization}
\label{sectionremcoh}
In this section, we give a counterexample to the second-cohomological generalization of the Shafarevich conjecture. We also discuss other cohomological formulations, by seeing cohomologies of other degrees.

Let $A$ be an abelian surface over a number field $F$ (with a fixed section $0\in A(F)$).
Let $X:=K_{n}(A)$ be the $2n$-dimensional generalized Kummer variety coming from $A$, i.e.\,the fiber $f^{-1}(0)$, where $f\colon A^{[n+1]}\rightarrow A$ is the summation map from $(n+1)$-points Hilbert scheme of $A$.
By extending $F$ if necessary, we may assume that $\Aut(X_{\overline{F}}) = \Aut (X)$ (note that $\Aut(X_{\overline{F}})$ is finitely generated by \cite[Theorem 1.6]{Cattaneo2019}).

By \cite[Theorem 1.2]{Oguiso2020}, the kernel of a natural map
\[
\Aut (X) \rightarrow \GL (H^{2}_{\et}(X_{\overline{F}},\widehat{\Z}))
\]
is the semidirect product $G:=(\Z/2\Z)^{4} \rtimes (\Z/2\Z)$.
Take a subgroup $\Z/2\Z \hookrightarrow G$.
For any surjection $\pi_{i}\colon \Gal(\overline{F}/F) \twoheadrightarrow \Z/2\Z \,\,(i\in I)$\;(note that $I$ is an infinite set), the composition 
\[
\Gal(\overline{F}/F) \twoheadrightarrow \Z/2\Z \hookrightarrow G \rightarrow \Aut(X_{\overline{F}})
\]
gives a different class in the Galois cohomology group $H^{1}(\Gal(\overline{F}/F),\Aut(X_{\overline{F}}))$ (note that the Galois action on $\Aut(X_{\overline{F}})$ is trivial).
We denote the corresponding twist of $X$ by $X_{i}$.
Then $X_{i}$ give infinitely many irreducible symplectic varieties with a Galois isomorphism
\[
H^{2}_{\et}(X_{\overline{F}},\widehat{\Z}) \simeq H^{2}_{\et}(X_{i,\overline{F}},\widehat{\Z}).
\]
In particular, we have the following.

\begin{propsub}
\label{propgenkum}
    Let $\ell$ be any prime number, and $S$ the set of places consist of places $v$ with $v|\ell$ and ramified places of the $\Gal(\overline{F}/F)$-module $H^2_{\et}(X_{\overline{F}},\Z_{\ell})$.
    \begin{enumerate}
    \item 
    The set
    \[
\left\{
  Y \left|
  \begin{array}{l}
  Y \colon\text{irreducible symplectic variety over} \ F \\
 Y_{\overline{F}}\simeq X_{\overline{F}} \\
 H^{2}_{\et}(Y_{\overline{F}},\Z_{\ell})\colon \text{unramified outside }S \\
  \end{array}
  \right.
\right\}/\text{$F$-isom}
\]
is an infinite set.
\item
Let $T$ be any finite set of finite places of $F$.
Then for almost all $X_{i} \,\,(i \in I)$, 
there exists a finite place $v_{i} \notin S \cup T$ such that
$X_{i}$ does not admit essentially good reduction at $v_{i}$ though $H^{2}_{\et}(X_{\overline{F}},\Z_{\ell})$ is unramified at $v_{i}$. 
Moreover, enlarging $T$ if necessary, we may assume that $X_{i}$ admits potentially good reduction at $v_{i}$.
Therefore, the analogue of \cite[Theorem 1.3 (ii)]{Liedtke2018} (see also \cite[Introduction]{Takamatsu2020c}) does not hold for this case.
\end{enumerate}
\end{propsub}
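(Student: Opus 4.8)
The plan is to obtain part (1) directly from the construction of the family $\{X_i\}_{i\in I}$ preceding the statement, and to derive part (2) by confronting the infinitude of this family with the finiteness in Theorem \ref{mainthm}. For part (1), each $X_i$ is an irreducible symplectic variety over $F$ with $X_{i,\overline{F}}\simeq X_{\overline{F}}$, being a twist of $X$ by a cocycle valued in $\Aut(X_{\overline{F}})$. The essential observation is that this cocycle factors through $G$, which by \cite[Theorem 1.2]{Oguiso2020} lies in the kernel of $\Aut(X)\to \GL(H^2_{\et}(X_{\overline{F}},\widehat{\Z}))$; twisting a representation by a cocycle valued in its kernel does not alter it, so there is a $\Gal(\overline{F}/F)$-equivariant isomorphism $H^2_{\et}(X_{i,\overline{F}},\Z_\ell)\simeq H^2_{\et}(X_{\overline{F}},\Z_\ell)$. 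Hence $H^2_{\et}(X_{i,\overline{F}},\Z_\ell)$ is unramified outside $S$. As the $\pi_i$ yield pairwise distinct classes in $H^1(\Gal(\overline{F}/F),\Aut(X_{\overline{F}}))$, the $X_i$ are pairwise non-$F$-isomorphic, so the set in (1) is infinite.

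For part (2), let $N$ be the $\widehat{\Z}$-numerical equivalence class of $X$; every $X_i$ lies in $N$ since $X_{i,\overline{F}}\simeq X_{\overline{F}}$, and $X$ being of generalized Kummer type gives $b_2(X_{\overline{F}})\geq 5$, so Theorem \ref{mainthm} applies to $N$. I would first enlarge $T$ to contain all finite places where $A$ has bad reduction, and then set $R:=\mathcal{O}_{F,S\cup T}$, whose height $1$ primes are exactly the finite places of $F$ outside $S\cup T$. Suppose, for contradiction, that infinitely many $X_i$ were essentially good at every height $1$ prime of $\Spec R$. They would then furnish infinitely many pairwise non-isomorphic elements of $\Shaf(F,R,N)$, contradicting its finiteness from Theorem \ref{mainthm}. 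Therefore, for all but finitely many $i$, there is a finite place $v_i\notin S\cup T$ at which $X_i$ is not essentially good; since $v_i\notin S$, the module $H^2_{\et}(X_{i,\overline{F}},\Z_\ell)\simeq H^2_{\et}(X_{\overline{F}},\Z_\ell)$ is unramified at $v_i$.

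Finally, because $v_i\notin T$ the abelian surface $A$ has good reduction at $v_i$, and the relative generalized Kummer construction over an integral model of $A$ then yields good reduction of $X=K_n(A)$ at $v_i$. The twist $X_i$ becomes isomorphic to $X$ over the quadratic extension cut out by $\ker\pi_i$, so $X_i$ acquires good reduction over that finite (possibly ramified) extension; thus $X_i$ has potentially good reduction at $v_i$ while failing essentially good reduction there, even though $H^2$ is unramified. This contradicts the expected analogue of \cite[Theorem 1.3 (ii)]{Liedtke2018}, proving the claim. I expect the only real subtlety to be organisational rather than conceptual: one must check that the entire infinite family shares the single class $N$ and that $R$ is chosen so that its height $1$ primes detect precisely the places outside $S\cup T$, which is exactly what lets the finiteness of Theorem \ref{mainthm} be played against the infinitude of $\{X_i\}$.
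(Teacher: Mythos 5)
Your proposal is correct and follows essentially the same route as the paper: part (1) is exactly the paper's preceding construction (twists by cocycles valued in the kernel $G$ of $\Aut(X)\to\GL(H^2_{\et}(X_{\overline{F}},\widehat{\Z}))$, which leave the Galois representation on $H^2$ unchanged while giving pairwise distinct classes in $H^1(\Gal(\overline{F}/F),\Aut(X_{\overline{F}}))$), and part (2) is the intended combination with Theorem \ref{mainthm}, playing the finiteness of $\Shaf(F,\mathcal{O}_{F,S\cup T},N)$ against the infinitude of the $X_i$, with potentially good reduction supplied by the relative Kummer construction over an integral model of $A$ and the splitting of the twist over the quadratic extension cut out by $\ker\pi_i$. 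No gaps beyond what the paper itself leaves implicit.
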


\begin{remarksub}
By using \cite[Theorem 5.2]{Mongardi2017}, we can show that the same statement for twists of $OG_{6}$-type varieties holds.
\end{remarksub}

On the other hand, we can show the following.
In the following proof, we use an idea using the integral models of Shimura varieties, which was taught to the author by Yoichi Mieda. 

\begin{theoremsub}
\label{cohshaf}
Let $\ell$ be a prime number, and $I \subset \Z_{\geq 0}$ a finite subset containing $\{2\}$.
Let $F$ be a finitely generated field over $\Q$, and $R$ a finite type algebra over $\Z$ which is a normal domain with fraction field $F$. 
Suppose that $1/\ell \in R$.
For any irreducible symplectic variety $X$ over $F$,
we say that $X$ satisfies (the condition) $C_{I}$ if 
\[
\Aut (X_{\overline{F}}) \rightarrow \GL( \bigoplus_{i\in I} H^{i}_{\et}(X_{\overline{F}},\Q_{\ell}) )
\]
is injective.
Let $n$ be a positive integer, and $N$ a $\widehat{\Z}$-numerically equivalent class of $2n$-dimensional irreducible symplectic varieties with the second Betti number $\geq 5$.
Then the set
\[
S(F,R,N,I,\ell):=
\left\{
  X \left|
  \begin{array}{l}
  X \colon\text{irreducible symplectic variety over} \ F \text{ satisfying }C_{I}\\
 \widehat{\Z} \text{-numerically equivalent class of} \ X \ \text{is} \ N \\
    H^{i}_{\et}(X_{\overline{F}},\Z_{\ell}) \,\,\,(i\in I)\colon \\
  \text{ unramified at any height 1 prime } \p\in \Spec R \\
  \end{array}
  \right.
\right\}/\text{$F$-isom}
\]
is finite.
\end{theoremsub}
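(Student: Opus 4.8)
The plan is to reduce the cohomological statement $C_I$ to the already-established polarized/numerical Shafarevich results, using the injectivity of the automorphism action on $\bigoplus_{i\in I} H^i_{\et}$ to control the twists that the weaker second-cohomology hypothesis allows. Concretely, I would argue along the following lines.

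\medskip
\noindent\textbf{Step 1: Reduce to counting twists of a fixed variety.}
First I would observe that the unramifiedness of $H^2_{\et}(X_{\overline{F}},\Z_{\ell})$ at every height $1$ prime, together with the hypothesis that the $\widehat{\Z}$-numerical class is $N$ with $b_2 \geq 5$, already puts $X$ (up to the subtleties of essentially good reduction) into range of the arguments behind Theorem \ref{mainthm}. The crucial point is that Lemma \ref{lemKSgood} and the Kuga--Satake/period-map machinery only ever used the $H^2$-data to prove good reduction of the associated abelian variety $A_x$ and hence finiteness of the relevant period points. Thus I would show that the set of $\widehat{\Z}$-numerical classes together with the $H^2$-Galois modules arising from $S(F,R,N,I,\ell)$ is finite, exactly as in the proof of Lemma \ref{lemfinlattice} and Lemma \ref{lemmainthm}. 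This reduces the problem to proving: for a single irreducible symplectic variety $X$ over $F$, there are only finitely many $F$-isomorphism classes of $X'$ satisfying $C_I$ with $X'_{\overline{F}} \simeq X_{\overline{F}}$ and $H^i_{\et}$ unramified outside the height $1$ primes for $i \in I$.

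\medskip
\noindent\textbf{Step 2: Analyze the twists via Galois cohomology.}
Any such $X'$ is a twist of $X$, classified by a class in $H^1(\Gal(\overline{F}/F), \Aut(X_{\overline{F}}))$. The group $\Aut(X_{\overline{F}})$ is finitely generated (by \cite[Theorem 1.6]{Cattaneo2019}), and after a finite extension I may assume the Galois action on it is trivial and that $\Aut(X_{\overline{F}}) = \Aut(X)$. Condition $C_I$ says precisely that $\Aut(X_{\overline{F}})$ embeds into $\GL(\bigoplus_{i\in I} H^i_{\et}(X_{\overline{F}},\Q_\ell))$; in particular the kernel $G$ responsible for the counterexample in Proposition \ref{propgenkum} is now trivial, so every nontrivial twist is genuinely detected on the cohomology $\bigoplus_{i\in I} H^i$. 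The key is then to promote the unramifiedness hypothesis on the individual $H^i_{\et}(X_{\overline{F}},\Z_\ell)$ into unramifiedness of the twisting cocycle: since $X'_{\overline{F}}\simeq X_{\overline{F}}$ carries the twist into an automorphism of $\bigoplus_{i\in I}H^i_{\et}(X_{\overline{F}},\Z_\ell)$, and both $X$ and $X'$ have $H^i$ unramified outside $S$, the cocycle $\alpha \colon \Gal(\overline{F}/F) \to \Aut(X_{\overline{F}})$ becomes unramified (on inertia) at every height $1$ prime after restriction to the subgroup acting through the finite image in $\GL(\bigoplus_i H^i)$.

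\medskip
\noindent\textbf{Step 3: Apply Hermite--Minkowski / Harada--Hida--Taguchi finiteness.}
Once I know that $\alpha$ factors through a bounded-index quotient that is unramified outside a fixed finite set of places with bounded residue degree, I would invoke \cite[Proposition 2.3, Theorem 2.9]{Harada2009} (the Hermite--Minkowski-type finiteness used throughout Section \ref{sectionpfmainthm}) to conclude that only finitely many such cocycles arise, exactly as in the proof of Proposition \ref{Proposition:FinitenessTwists2}. Each cocycle class gives finitely many twists, completing the count. The integral-model idea attributed to Mieda enters here: rather than directly bounding ramification of $\alpha$, I would spread out $X$ and the candidate twists over $\Spec R$ and use that unramifiedness of each $H^i_{\et}(X_{\overline{F}},\Z_\ell)$ at a height $1$ prime forces the associated automorphism-valued cocycle to be unramified there by an argument paralleling Lemma \ref{Lemma:Unramifiedness} (now applied on the $\bigoplus_i H^i$-realization, where the action is faithful by $C_I$), rather than on a polarized integral model.

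\medskip
\noindent\textbf{Main obstacle.}
The delicate step is Step 2: converting unramifiedness of the \emph{cohomology} at $\p$ into unramifiedness of the \emph{twisting cocycle} at $\p$. Faithfulness of the $\Aut$-action on $\bigoplus_{i\in I} H^i$ (i.e.\ $C_I$) makes the cocycle \emph{detectable} on cohomology, but I still need that the inertia at $\p$ acts trivially through this faithful representation. Since $X'$ need not admit a good integral model (only its $H^i$ are unramified), I cannot simply specialize automorphisms as in Lemma \ref{Lemma:Specialization}; instead the faithful cohomological realization must be used to transport the inertia-triviality of $H^i$ directly to $\alpha|_{I_\p}$. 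Controlling this uniformly over all twists, so that the Harada finiteness input applies with a fixed bound on index and a fixed finite bad set $S$, is where the real work lies.
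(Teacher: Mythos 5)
The fatal gap is your Step~1. You assert that the finiteness of the relevant period points follows ``exactly as in'' Lemmas \ref{lemfinlattice} and \ref{lemmainthm}, but those arguments do \emph{not} transfer to the hypotheses of Theorem \ref{cohshaf}, and this is precisely where the paper has to deviate from the proof of Theorem \ref{mainthm}. Two inputs break down. First, Lemma \ref{level} produces a single finite extension $E/F$ over which every member of the family acquires a $D_{i}(m)$-level structure; its Hermite--Minkowski argument \cite{Harada2009} requires the relevant mod-$m$ representations to be unramified at all height $1$ primes, and $m\geq 3$ is a fixed auxiliary integer whose prime factors have nothing to do with $\ell$ --- under your hypotheses only the $\ell$-adic cohomology is controlled, and the $\ell$-independence of unramifiedness is unknown for irreducible symplectic varieties (the paper states this explicitly as the reason for taking a different approach). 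Second, the Kuga--Satake point $A_{x}$ used in Lemmas \ref{lemKSgood} and \ref{lemfinlattice} is defined through the section $\delta_{N}$ of $f_{N}$, which exists only for the spin levels $D_{N}(m)$. The paper's fix is to take $m$ a large power of $\ell$, replace the levels by the congruence subgroups $D_{i}(m)'$, $D_{N}(m)'$ (so that level structures can be produced from $\ell$-adic unramifiedness alone), and then --- since no section of $f_{N}$ exists at these levels --- lift $\SO$-points through the finite \'etale cover $f_{N}$ and prove that the lifting field is unramified at every height $1$ prime by extending points across Kisin's integral models $\mathcal{S}_{\mathbb{K}_{N,m}^{\mathrm{sp}}}(\GSpin_{L_{N}})\rightarrow \mathcal{S}_{D_{N}(m)'}(\SO_{L_{N}})$ \cite{Kisin2010}; only then do Hermite--Minkowski and Faltings apply. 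This is Mieda's integral-model idea; your proposal invokes it but aims it at the twisting cocycle, where it is neither needed nor where the difficulty lies.

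Your Step~2, by contrast, is essentially correct and is a legitimate alternative to what the paper does for twists: if $f\colon X_{\overline{F}}\overset{\simeq}{\to}X'_{\overline{F}}$ and the $H^{i}_{\et}(\cdot,\Z_{\ell})$ ($i\in I$) of \emph{both} $X$ and $X'$ are unramified at $\p$, then $(f^{\sigma})^{\ast}=f^{\ast}$ for $\sigma\in I_{\p}$, so $\alpha_{f}(\sigma)$ acts trivially on $\bigoplus_{i\in I}H^{i}_{\et}(X_{\overline{F}},\Q_{\ell})$ and $C_{I}$ forces $\alpha_{f}(\sigma)=\id$ --- no integral model of the twist is needed. (The paper instead rigidifies: it adds a level structure $\beta$ on the torsion-free part of $\bigoplus_{i\in I}H^{i}_{\et}(X_{\overline{F}},\Z_{\ell})$, so that by $C_{I}$ and torsion-freeness of congruence subgroups the automorphism groups of the enriched objects are trivial, and finiteness of $F$-forms becomes automatic.) But your Step~3 has its own gap: you run the cocycle against $\Aut(X_{\overline{F}})$, which for irreducible symplectic varieties may be infinite (\cite{Cattaneo2019} gives only finite generation), so the Harada--Hiranouchi finiteness has no uniform index bound to work with --- in Proposition \ref{Proposition:FinitenessTwists2} that bound is $|\mathscr{G}|$ with $\mathscr{G}=\Aut(X_{\overline{F}},\polL_{\overline{F}})$ \emph{finite}. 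You would need to run the argument on polarized (or otherwise rigidified) objects, or supply an additional finiteness statement for finite subgroups of $\Aut(X_{\overline{F}})$, before the count of cocycles closes.
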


\begin{remarksub}
\label{remcohshaf}
\begin{enumerate}
\item
Let $X_{0}$ be a complex irreducible symplectic manifold.
By \cite[Theorem 2.1]{Hassett2013}, the kernel of 
\[
\Aut (X_{0}) \rightarrow \GL(H^{2}(X_{0},\Q))
\]
is a deformation invariant of the manifold $X_{0}.$
\item
By Theorem \ref{cohshaf}, the argument in the proofs of \cite[Theorem 1.3]{Oguiso2020}, \cite[Theorem 5.1]{Oguiso2020}, and \cite[Theorem 3.1]{Mongardi2017}, the set 
\[
\Shaf(F,R,\ast) :=
\left\{
  X \left|
  \begin{array}{l}
  X \colon\text{irreducible symplectic variety over} \ F \text{ of } \ast \\
  H^{i}_{\et}(X_{\overline{F}},\Z_{\ell}) \,\,\,(i\in I)\colon \\
  \text{unramified at any height 1 prime } \p\in \Spec R \\
  \end{array}
  \right.
\right\}/\text{$F$-isom},
\]
is a finite set in the following cases.
\begin{enumerate}
    \item 
    $\ast =$ $K3^{[n]}$-type, and $I=\{2\}$.
    \item
    $\ast =$ generalized Kummer type of dimension $2n$, and $I= \{0,\ldots 4n\}$.
    \item
    $\ast =$ $OG_{10}$-type, and $I= \{2\}$.
\end{enumerate}
Here, we use the convention in \cite[Corollary 6.0.6]{Takamatsu2021}.
\end{enumerate}
\end{remarksub}

\begin{proof} 
Basically, this follows from the same proof as in \cite[Section 4]{Takamatsu2020a}.
However, since we don't know the $\ell$-independence of unramifiedness of cohomologies of irreducible symplectic varieties, we should take a little different approach. 

First, 
we prove the polarized case, i.e.\,the finiteness of the following set for any positive number $c\colon$
\[
\left\{
  (X, \lambda) \left|
  \begin{array}{l}
  X \colon 2n \text{-dimensional irreducible symplectic variety over} \ F \\
X \text{ satisfies }C_{I}, b_{2}(X_{\overline{F}}) \geq 4 \\
  X \ \text{has the Fujiki constant} \ c \\
    H^{i}_{\et}(X_{\overline{F}},\Z_{\ell}) \,\,\,(i\in I)\colon \\
  \text{unramified at any height 1 prime } \p\in \Spec R \\
  \lambda \colon \text{polarization of Beauville degree} \ d
  \end{array}
  \right.
\right\}/\text{$F$-isom}.
\]
We denote this set by $S_{1} =S_{1}(F,R,2n,d,c,I,\ell)$.
Note that we may assume $1/\ell \in R$ and $R$ is regular.

In the following, we fix an integer $m$ which is a sufficiently large power of $\ell$.
To avoid using the unramifiedness of $2$-adic cohomologies,
we will introduce compact open subgroups $D_{i}(m)'\subset \SO_{L_{i}}(\widehat{\Z})$, $D_{N}(m)'\subset \SO_{L_{N}}(\widehat{\Z})$ which is slightly different from $D_{i}(m), D_{N}(m)$ in Section \ref{sectionmoduli}.
We put
\[
D_{i}(m)' :=\{g \in D_{i} \mid g =1 \text{ mod } m 
\},
\]
and
\[
D_{N}(m)' :=\{ g \in \SO_{\Lambda_{N}}(\widehat{\Z}) \mid g=1 \text{ mod } m
\}.
\]
As in Section \ref{sectionmoduli}, we have a diagram

\[
\xymatrix{
&&  \Sh_{\mathbb{K}_{N,m}^{\mathrm{sp}}}(\GSpin_{L_{N}}) \ar[r]^-{h_{N}} \ar@<0.5ex>[d]^-{f_{N}}  &\Sh_{\K_{m}}(\GSp_{V_{N}}) \\
 M_{i,D_{i}(m)'} \ar[r]^-{j} &  \Sh_{D_{i}(m)'}(\SO_{L_{i}}) \ar[r]^-{j_{N}} & \Sh_{D_{N}(m)'}(\SO_{L_{N}})  & \\
}.
\]

Note that there is no section of $f_{N}$ in this case.
We put
\[
S_{2}:=
\left\{
  (X, \lambda, \omega, \alpha) \left|
  \begin{array}{l}
(X, \lambda, \omega, \alpha) \ \text{an object of} \ \cup_{i\in I_{2n,d,c}} M_{i,D_{i}(m)'} (F) \\
 H^{2}_{\et}(X_{\overline{F}},\Z_{\ell}) \colon \\
  \text{unramified at any height 1 prime } \p\in \Spec R 
  \end{array}
  \right.
\right\}/\text{$F$-isom}
\]
and
\[
S_{3}:=
\left\{
  (X, \lambda, \omega, \alpha)_{\C} \left|
  \begin{array}{l}
(X, \lambda, \omega, \alpha) \ \text{an object of} \ \cup_{i\in I_{2n,d,c}} M_{i,D_{i}(m)'} (F) \\
H^{2}_{\et}(X_{\overline{F}},\Z_{\ell}) \colon \\
  \text{unramified at any height 1 prime } \p\in \Spec R 
  \end{array}
  \right.
\right\}/\text{$\C$-isom}.
\]
First, we will show that $S_{3}$ is a finite set.
By \cite[Section 2, Section 3]{Kisin2010}, the map $f_{N}$ can be extended to the morphism of integral models over $R$
\[
\widetilde{f}_{N}\colon \mathcal{S}_{\mathbb{K}_{N,m}^{\mathrm{sp}}}(\GSpin_{L_{N}}) \rightarrow \mathcal{S}_{D_{N}(m)'}(\SO_{L_{N}}).
\]
We may assume that $\widetilde{f}_{N}$ is a finite \'{e}tale cover.
For any $x\in S_{2}$, let 
 $y_{x} \colon F' \rightarrow \mathcal{S}_{\mathbb{K}_{N,m}^{\mathrm{sp}}}(\GSpin_{L_{N}})$ be the pull back of $j_{N}\circ j (x)$ via $f_{N}$.
 For any height 1 prime $\p$ of $R$, 
 by the proof of Lemma \ref{lemKSgood} and the construction of $\mathcal{S}_{\mathbb{K}_{N,m}^{\mathrm{sp}}}(\GSpin_{L_{N}})$, 
 the map $y_{x}$ can be extended to $\widetilde{y}_{x} \colon \Spec R' \rightarrow \mathcal{S}_{\mathbb{K}_{N,m}^{\mathrm{sp}}}(\GSpin_{L_{N}})$, where $R'$ is the normalization of $R_{\p}$ in $F'$.
Therefore, there exists a morphism $y_{x}' \colon \Spec R_{\p} \rightarrow \mathcal{S}_{D_{N}(m)'}(\SO_{L_{N}}) $ which extends $y_{x}$.
Since the pull back of $y_{x}'$ via $\widetilde{f}_{N}$ is normal, $R'$ is finite \'{e}tale over $R_{\p}$.
By \cite[Proposition 2.3, Theorem 2.9]{Harada2009}, there exists a finite extension $F_{N}/F$ such that for any object $x \in S_{2}$, the $F$-rational point $j_{N} \circ j(x)$ can be lifted to an $F_{N}$-valued point of $z_{x}$ of $\Sh_{\mathbb{K}_{N,m}^{\mathrm{sp}}}(\GSpin_{L_{N}})$.
By \cite[VI, \S1, Theorem 2]{Faltings1992} and the quasi-finiteness of $j_{N} \circ j$, the set $S_{3}$ is finite.

By the choice of the level structure, the unramifiedness condition (note that $\{2\} \subset I$), and the proof of Lemma \ref{level}, we can show the following$\colon$\\
There exists a finite Galois extension $E$ over $F$ such that for every $(X,\lambda) \in S_{2}$, there exists a level $D_{i}(m)'$-structure on $(X,\lambda)_{E}$.

We use the following definition.
\begin{definitionsub}
Let $k$ be a field of characteristic $0$, and $X$ an irreducible symplectic variety over $k$.
We denote the torsion-free part of $\bigoplus_{i \in I} H^{i}_{\et}(X_{\overline{k}},\Z_{\ell})$ by $H_{I,\ell}(X_{\overline{k}})$.
Let $\mathcal{L}$ be the abstract $\Z_{\ell}$-module which is isomorphic to $H_{I,\ell}(X_{\overline{k}})$.
We define a level $\ell^{a}$ structure on $X$ of degrees $I$ as a $\Gal(\overline{k}/k)$-invariant
$\GL(\mathcal{L},\ell^{n})$-orbit $\beta$ of an isomorphism of $\Z_{\ell}$-modules
\[
\mathcal{L} \simeq H_{I,\ell}(X_{\overline{k}}).
\]
Here, we put
\[
\GL(\mathcal{L},\ell^{n}) :=\{g \in \GL (\mathcal{L}) \mid g \equiv 1 \textup{ mod } \ell^{n} \}.
\]
\end{definitionsub}
By the same argument as in the proof of Lemma \ref{level} and Theorem \ref{thmpol}, the problem is reduced to showing that
\[
S_{4}':=
\left\{
  (X, \lambda, \omega, \alpha,\beta) \left|
  \begin{array}{l}
(X, \lambda, \omega, \alpha) \ \text{an object of} \ \cup_{i\in I_{2n,d,c}} M_{i,D_{i}(m)'} (F) \\
\beta\colon \text{level }m\text{ structure on }X \text{ of degrees } I\\
X \text{ satisfies } C_{I} \\
 H^{2}_{\et}(X_{\overline{F}},\Z_{\ell}) \colon \\
  \text{unramified at any height 1 prime } \p\in \Spec R 
  \end{array}
  \right.
\right\}/\text{$F$-isom}
\]
is a finite set.
Since the automorphism group of any object $(X, \lambda, \omega, \alpha,\beta) \in S_{4}'$ is trivial, the finiteness of $S_{4}'$ also follows from the finiteness of $S_{3}$. It finishes the proof of the polarized case.

Next, we will prove the unpolarized case.
By the choice of $D_{i}(m)'$ and Lemma \ref{unplevel}, we can take a finite extension $E/F$ which satisfies the following$\colon$
\begin{enumerate}
\item
$E$ contains the above field $F_{N}$.
\item
For any element $X \in S(F,R,N,I,\ell)$, there exist a polarization $\lambda$ on $X$, an orientation $\omega$ on $X_{E}$, a level $D_{i}(m)'$ structure $\alpha$ on $X_{E}$, and a positive integer $d$ 
such that
 $(X_{E},\lambda_{E},\omega,\alpha) \in M_{i, D_{i}(m)'}(E)$ for some $i \in I_{2n,d,c_{N},N}$.
 \item
 For any element $X \in S(F,R,N,I,\ell)$, we have $\Pic_{X/F}(E) = \Pic_{X/F} (\overline{F})$.
\end{enumerate}
By the proof of Lemma \ref{lemfinlattice},
the set
\[
\{
\Pic_{X/F}(E) \mid X \in S(F,R,N,I,\ell)
\}/ \text{lattice isometry}
\]
is a finite set.
By Lemma \ref{lempol} and \cite[Theorem 1.0.1]{Takamatsu2021}, the desired finiteness follows from the finiteness of $S_{1}$.
\end{proof}

%
%

\newcommand{\etalchar}[1]{$^{#1}$}
\providecommand{\bysame}{\leavevmode\hbox to3em{\hrulefill}\thinspace}
\providecommand{\MR}{\relax\ifhmode\unskip\space\fi MR }
\providecommand{\MRhref}[2]{%
  \href{http://www.ams.org/mathscinet-getitem?mr=#1}{#2}
}
\providecommand{\href}[2]{#2}

\end{document}